\newif\ifdraft
\newcommand{\activereg}{A}
\newcommand{\affinetransformation}{\afftransf}
\newcommand{\afftransf}{{\mathcal R}}
\DeclareMathOperator{\BV}{BV}
\newcommand{\eps}{\varepsilon}
\newcommand{\firstquadrant}{\overline {\rm I}_{{\rm quad}}}
\newcommand{\firstquadrantopen}{{\rm I}_{{\rm quad}}}
\newcommand{\freebound}{\Gamma}
\newcommand{\halfline}{\mathfrak{h}}
\newcommand{\incrt}{s}
\newcommand{\inidat}{{u_0}}
\newcommand{\inverseMau} {S}
\newcommand{\Lyap}{\mathfrak L}
\newcommand{\mass}{m}
\newcommand{\nada}[1]{}
\newcommand{\NN}{\mathbb N}
\newcommand{\oi}{[0,+\infty)}
\newcommand{\oic}{[0,+\infty)}
\newcommand{\pressol}{v}
\newcommand{\primoinsieme}{E}
\newcommand{\R}{\ensuremath{\mathbb{R}}}
\newcommand{\secondoctantopen}{\soctopen}
\newcommand{\secondoinsieme}{F}
\newcommand{\soct}{\overline{{\rm II}}_{{\rm oct}}}
\newcommand{\soctopen}{ {\rm II}_{\rm oct} }
\newcommand{\supp}{\textup{supp}}
\newcommand{\support}{c}
\newcommand{\tailreg}{T}
\newcommand{\uM} {\unknownMau}
\newcommand{\unknownMau} {\ell}
\newcommand{\unknownplustMau} {L}
\newcommand{\wu} {\widehat u}
\newcommand{\lmin}{m}
\newcommand{\lmax}{M}
\newcommand{\tmin}{t}
\newcommand{\tmax}{T}
\newcommand{\gmin}{g}
\newcommand{\gmax}{G}
\newcommand{\lmins}{\lmin^*}
\newcommand{\lmaxs}{\lmax^*}
\newcommand{\tmins}{\tmin^*}
\newcommand{\tmaxs}{\tmax^*}
\newcommand{\gmins}{\gmin^*}
\newcommand{\gmaxs}{\gmax^*}
\def\len{\mathop{\rm len}}
\def\M{\mathcal{M}}
\def\N{\mathbb N}
\theoremstyle{plain}
\numberwithin{equation}{section}
\newtheorem{Remark}{Remark}[section]
\newtheorem{Theorem}{Theorem}[section]
\newtheorem{Proposition}{Proposition}[section]
\newtheorem{Definition}{Definition}[section]
\newtheorem{Lemma}{Lemma}[section]
\theoremstyle{definition}
\newtheorem{Example}{Example}[section]
\def\<#1>{\lower4pt\hbox{$\,\includegraphics{\figdir/mancala-#1.mps}\,$}}
\def\(#1){\includegraphics{\figdir/mancala-#1.mps}}
\def\<<#1>>{\vcenter{\hbox{\includegraphics{\figdir/riemann-#1.mps}}}}
\newcommand{\alessandro}[1]{\ifdraft{\color{orange}#1}\fi}
\newcommand{\giovanni}[1]{\ifdraft{\color{blue}#1}\fi}
\newcommand{\maurizio}[1]{\ifdraft{\color{ForestGreen}#1}\fi}
\newcommand{\general}[1]{\ifdraft #1\fi}
\author[G.\ Bellettini]{Giovanni Bellettini}
\address{Dipartimento di Ingegneria dell'Informazione e Scienze Matematiche,
Universit\`a di Siena, 53100 Siena, Italy, and International Centre for
Theoretical Physics ICTP, Mathematics Section, 34151 Trieste, Italy}
\email{bellettini@diism.unisi.it}
\author[A.\ Betti]{Alessandro Betti}
\address{Universit\'e 
C\^ote 
d'Azur, Inria, CNRS, Laboratoire I3S,
Maasai team, Nice, France}
\email{alessandro.betti@inria.fr}
\author[M.\ Paolini]{Maurizio Paolini}
\address{Dipartimento di Matematica e Fisica, Universit\`a Cattolica del
Sacro Cuore, 25121 Brescia, Italy}
\email{maurizio.paolini@unicatt.it}
\date{\today\general{\qquad
\giovanni{{\vrule height7pt width10pt depth3pt}} $\mapsto$ Giovanni\qquad
\maurizio{{\vrule height7pt width10pt depth3pt}} $\mapsto$ Maurizio\qquad
\alessandro{{\vrule height7pt width10pt depth3pt}} $\mapsto$ Alessandro}}
\thanks{The first author is member of the GNAMPA of INdAM.
This work has been supported by the French government, 
through the 3IA C\^ote  d'Azur, Investment in the Future, project managed by the National Research Agency (ANR) 
with the reference number ANR-19-P3IA-0002.}
\title[A free boundary singular transport equation]{A free boundary singular transport equation as a formal limit of a 
discrete dynamical system}
\begin{document}
\sloppy 

\maketitle

\begin{abstract}
We study the continuous version of a hyperbolic rescaling
of a discrete game, called open mancala. The resulting PDE
turns out to be a singular transport equation, with a 
forcing term taking values in $\{0,1\}$, and 
discontinuous in the solution itself. We prove
existence and uniqueness of a certain formulation of the problem,
based on a nonlocal equation satisfied by the free boundary
dividing the region where the forcing is one (active region)
and the region where there is no forcing (tail region). Several
examples, most notably the Riemann problem,
are provided, related to singularity formation. Interestingly,
the solution 
can be obtained by a suitable vertical rearrangement of a multi-function.
Furthermore, the PDE admits a Lyapunov functional.
\end{abstract}

\section{Introduction}\label{sec:introduction}
The mancala game is a game that, in its open and idealized version, can be
described in terms of two moves as follows. Suppose to have an infinite
sequence of holes on the half-line $(0,+\infty)$, each hole 
containing zero or more
seeds (always in finite number). Suppose also that the first hole is
nonempty, that nonempty holes are consecutive and in finite number.  The
first half-mancala move consists in taking all seeds in the first hole (the
left-most nonempty hole, numbered $1$), and sowe them in the subsequent
holes, one seed per hole.  In this way the first hole becomes empty; the
second half-mancala move consists in a left-translation of one position of
all subsequent holes, so that the new left-most hole becomes nonempty, and
the two half-moves can therefore be repeated. Of course, the total mass
(i.e., the total number of seeds) is preserved.  The open mancala game is
modeled as a discrete dynamical system, for which several interesting
questions can be posed, such as the classification of periodic
configurations, the optimal number of moves necessary to reach a periodic
configuration, and so on, see for instance \cite{Pao} and references therein.
The aim of this paper is to analyse a continuous version of this dynamical
system, which turns out to be modeled by a new type of transport
equation.

After having introduced a natural discrete time, a suitable hyperbolic
rescaling of the discrete game and a simple algebraic manipulation (see
Section \ref{sec:the_discrete_game} for the details) lead to
\begin{equation}
\label{eq:discrete_equation_h_intro}
\frac{u_{ih}^{(k+1)h}-u_{ih}^{kh}}{h}=
\frac{u_{ih}^{(k+1)h}-
u_{(i-1)h}^{(k+1)h}}{h}+
1_{\left\{2h\leq\,\cdot\,\leq u_{h}^{kh} + h \right\}}(ih),
\end{equation}
where $ih$, $i \geq 1$ integer, denotes the discrete $i$-th space position,
$kh$, $k \geq 0$ integer, denotes the discrete $k$-th time, $h$ is the space
grid (equal to the time grid), $u_{jh}^{lh}$ stands for the number of seeds
at $(jh, lh)$, and $1_{A}$ is the characteristic function of the set
$A\subset [0,+\infty)$. The discrete system
\eqref{eq:discrete_equation_h_intro} is coupled with
\begin{equation}\label{eq:initial_condition_intro}
u_{ih}^0 = \inidat(ih), \qquad i \geq 1,
\end{equation}
$u_0$ being the given initial configuration.

It is then natural to consider the following hyperbolic pde, considered as a
formal limit of \eqref{eq:discrete_equation_h_intro},
\eqref{eq:initial_condition_intro} as $h \to 0^+$:
\begin{equation}\label{eq:pde_intro}
\begin{cases}
u_t(t,x)=u_x(t,x)+1^{}_{(0,u(t,0))}(x), & {} \\
u(0) = u_0. & {}
\end{cases}
\end{equation}
Notice carefully the $\{0,1\}$-valued discontinuous forcing term on the
right-hand side which, given any $t \geq 0$, depends on the space right trace
$u(t,0)$ of $u(t,\cdot)$ at $x=0$; this space dependent forcing depends on
$u$ in a nonlinear and nonlocal way, and results the PDE in
\eqref{eq:pde_intro}, to our best knowledge, into a new type of partial
differential equation.  Observe also that, {\it formally}, the total mass is
conserved:
\[\begin{aligned}
& \frac{d}{dt}
\int_0^{+\infty} u(t,x)~ dx =
\int_0^{+\infty} u_t(t,x)~   dx\\
= &
\int_0^{+\infty} \left(u_x(t,x)  +
1_{(0,u(t,0))}(x)\right)~ dx =
- u(t,0) +
\int_0^{+\infty} 1_{(0,u(t,0))} (x)~dx = 0.
\end{aligned}\]
The physical interpretation of \eqref{eq:pde_intro} can be given in terms of
mass transportation, as follows.  Imagine to have an horizontal conveyor belt
transporting, at uniform speed, a certain amount of sand toward the left;
once a slice of sand of height $\kappa$ reaches the (left) boundary $x=0$ and
falls down, it is uniformly redistributed horizontally for a length $\kappa$
on the conveyor, and the process continues.

From the mathematical point of view, we are facing a homogeneous linear
transport equation in a ``tail'' region
\[\tailreg(u) := \{\,(t,x) : t\geq 0,\; x > u(t,0)\,\},\]
where the forcing
term is suppressed,
and an inhomogeneous 
transport equation in an ``active'' region
\[\activereg(u) := \{\,(t,x) : t\geq 0,\; 0<x < u(t,0)\,\},\]
where the forcing term equals one.  Tail and active regions are separated by
an interface, a curve in time-space in our setting, which is the
(generalized) graph of $u(\cdot,0)$ over $\R^+$, and that can be considered
as a sort of free boundary.

We anticipate here that, interestingly, some similarities with the typical
phenomena of entropy solutions of nonlinear first order conservation laws
(say the Burger's equation, just to fix ideas) appear also for solutions of
\eqref{eq:pde_intro}, such as possible creation of decreasing jumps, and the
validity of a unilateral Lipschitz condition \cite{Daf:16}. Typically,
for Burgers'
equation, increasing jumps disappear instantaneously, while in the
present model they are progressively eroded while travelling toward the
origin\footnote{After reaching the origin, they possibly disappear.} at unit
speed (see Figs. \ref{fig:Riemann_movie_I} and \ref{fig:Riemann_movie_II}).
The most surprising phenomenon happening in the free boundary of
\eqref{eq:pde_intro} is what we have described through an {\it affine
transformation} (equation \eqref{eq:R}) followed by a {\it vertical
rearrangement} (equation \eqref{eq:trasformato_riarrangiato}), a procedure
that reminds the ``equal area rule'' of conservation laws (see \cite[pag. 42]{Wh},
and 
\cite{Br:84}) and that will be carefully analysed in Sections
\ref{subsec:transformation_R_and_rearrangement} and
\ref{sec:the_Riemann_problem}. Roughly, it turns out that the free
boundary is the (generalized) graph $G_\uM$ of a $\BV_{{\rm loc}}$ 
function $\uM$ which has to
satisfy a nonlocal equation (see \eqref{eq:elle_intro}) that can be described
in two steps: first the affine transformation forces the creation of a
multi-valued graph, in correspondence of certain previous parts of the graph
of $\uM$ having slope less than $-1$ (called critical slopes).  In a second
step, \eqref{eq:elle_intro} reduces the multigraph to the graph of a
single-valued function by a geometric principle based on a vertical
rearrangement (a Steiner symmetrization). In turn, from such a genuine
function, the solution $u$ can be directly recovered (Remark
\ref{rem:alternative_expression}). The remarkable operation of rearrangement
is always necessary at a critical time (Definition
\ref{def:critical_segments_and_critical_times}), i.e., a time corresponding
to the presence of a critical slope, and typically such times are
unavoidable.

In order to define and analyse a solution to \eqref{eq:pde_intro}, it is
convenient to change variables in the obvious way,
\begin{equation}\label{eq:linear_change_of_variables}
\Phi\colon(t,x) \in \firstquadrant
\to (\tau,\xi) \in \soct, \qquad
\tau := t, \quad \xi := t+x, \end{equation}
where $\firstquadrant := [0,+\infty) \times \oic$ is the closure of the open
first quadrant $\firstquadrantopen$, $\soct := \{(\tau,\xi) \in [0,+\infty)
\times \R, \xi \geq \tau\}$ is the closure of the open second octant ${\rm
II}_{{\rm oct}}$, and formally consider \eqref{eq:pde_intro} in the new
coordinates, which after integration reads as
\begin{equation}\label{eq:pde_integral_intro}
\wu(\tau,\xi) = \inidat(\xi) + \int_0^\tau 1_{(\xi,+\infty)}
\left(\wu(s,s)+ s\right)\,ds, \qquad (\tau, \xi) \in \soctopen,
\end{equation}
where $\wu(s,s)$ is the right limit of $\wu(s,\cdot)$ at any $s >0$.

Switching our perspective from \eqref{eq:pde_intro} to
\eqref{eq:pde_integral_intro}, we are interested in the global existence,
uniqueness, and qualitative properties of a solution $\wu$: the leading idea
is to try to identify a plane curve\footnote{Heuristically, the graph of $u$
as a function of time, on the line $\{x=0\}$.} potentially representing the
free boundary, and then reconstruct from it the values of $\wu$ in
$\soctopen$.  It turns out that this strategy is successfull (see Sections
\ref{sec:the_continuous_problem:t_x_formulation},
\ref{sec:another_formulation} for detailed statements), the free boundary
being identified by the graph of the function $\ell$ in \eqref{eq:elle_intro}
below\footnote{ How to guess the equation for $\ell$ is obtained in formula
\eqref{eq:mistery}.}; proving existence of a solution to
\eqref{eq:elle_intro} is the central part of the proof.

Denote by $\vert \cdot\vert$ the one-dimensional Lebesgue measure in $\R$.

\begin{Theorem}\label{teo:main_uno}
Let $\inidat \in \BV_{{\rm loc}}(\oi)$
be nonnegative. The nonlocal equation
\begin{equation}\label{eq:elle_intro}
\unknownMau(t) = \inidat(t) +
\vert\{\, \incrt \in [0,t] : \unknownMau(t-\incrt) 
> \incrt\, \}\vert \qquad \forall t \in \oi,
\end{equation}
in the nonnegative unknown
$\unknownMau \in \BV_{{\rm loc}}(\oi)$,
has a solution, which is unique provided $\inidat$
is positive in a right neighbourhood of the origin.
In addition, $\ell$ cannot have increasing jumps, unless $\inidat$
has, while it may have decreasing jumps. 
Moreover, the function 
\begin{equation}\label{eq:global_intro}
\pressol(\tau,\xi):=\inidat(\xi)+|\{\,s\in[0,\tau] : 
\unknownMau(s) + s>\xi\,\}|\qquad \forall(\tau,\xi)\in \soctopen
\end{equation}
is the global unique solution to \eqref{eq:pde_integral_intro} having the generalized
graph $G_\ell$ of $\ell$ as free boundary, it satisfies the mass
conservation, and $\pressol(\tau,\cdot)$ satisfies a unilateral Lipschitz
condition, provided $\inidat$ does.
\end{Theorem}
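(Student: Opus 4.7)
The plan is to reduce everything to the construction and uniqueness of $\ell$ satisfying \eqref{eq:elle_intro}, since the formula \eqref{eq:global_intro} for $v$ then becomes essentially a verification. Introduce the fixed-point operator
\[
(T\ell)(t) := \inidat(t) + |\{r \in [0,t] : \ell(r) + r > t\}|,
\]
so that \eqref{eq:elle_intro} amounts to finding a fixed point of $T$ (note the change of variable $r = t-s$). I see two natural routes to existence. The most direct is a step-by-step construction on short intervals $[0,\delta],[\delta,2\delta],\dots$, on each of which $T$ should be an $L^1$-contraction on a closed ball in $\BV$, via the estimate
\[
|(T\ell_1-T\ell_2)(t)| \leq |\{r \in [0,t] : (\ell_1(r)+r>t)\,\triangle\,(\ell_2(r)+r>t)\}|,
\]
which one bounds by $\int_0^t |\ell_1-\ell_2|$ using a coarea/transversality argument exploiting the a priori $\BV$ bound on $r\mapsto\ell(r)+r$. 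The elementary inequality $\ell(t)\leq \inidat(t)+t$ (since the measure term is at most $t$) prevents blow-up and yields a global $\BV_{\rm loc}$ solution. Alternatively, I would pass to the limit in the discrete scheme \eqref{eq:discrete_equation_h_intro}, using uniform $\BV$ bounds and Helly's theorem; the discontinuity of the indicator is harmless since $L^1_{\rm loc}$ convergence suffices for dominated convergence.

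For uniqueness under the positivity hypothesis, $\inidat \geq c > 0$ on some initial window $[0,\eta]$ forces $\ell \geq c$ there, whence $\ell(t) = \inidat(t)+t$ on $[0,\min(\eta,c))$ because the set in the definition fills $[0,t]$ entirely. If two solutions first disagree at a time $t^*$, the same $L^1$-contraction estimate on a right neighbourhood of $t^*$ forces agreement on $[t^*,t^*+\delta]$, a contradiction. The positivity hypothesis is precisely what rules out the pathological plateau configurations (where $\ell(r)+r$ is constant at the relevant level on a set of positive measure) which the transformation $\mathcal R$ and the vertical rearrangement discussed in the introduction are designed to handle, and which would otherwise produce genuine non-uniqueness.

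Once $\ell$ is constructed, the properties of $v$ follow with little work. By \eqref{eq:elle_intro}, $v(s,s) = \inidat(s) + |\{r \in [0,s]: \ell(r)+r>s\}| = \ell(s)$, so substitution into \eqref{eq:global_intro} immediately verifies the integral equation \eqref{eq:pde_integral_intro} and identifies the free boundary as the graph $G_\ell$. Mass conservation is a Fubini computation. The unilateral Lipschitz bound transfers from $\inidat$ because the measure term $\xi\mapsto|\{s\in[0,\tau]:\ell(s)+s>\xi\}|$ is nonincreasing in $\xi$, hence cannot degrade a one-sided slope bound. For the jump statement on $\ell$, write the equation for the left and right limits at $t_0$: the two contributions differ precisely by the measure $|\{r\in[0,t_0]:\ell(r)+r=t_0\}|\geq 0$ with the correct sign, so
\[
\ell(t_0^+) - \ell(t_0^-) \leq \inidat(t_0^+) - \inidat(t_0^-),
\]
ruling out strict upward jumps unless $\inidat$ already has one; decreasing jumps are not excluded and indeed arise at critical times.

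The genuine obstacle is the construction and uniqueness of $\ell$: because the indicator is $\{0,1\}$-valued, $T$ fails to be continuous in sup-norm, and the natural $L^1$ topology forces one to quantify how the super-level set $\{r : \ell(r)+r>t\}$ shifts under small perturbations of $\ell$. A coarea-type or transversality argument, together with the positivity hypothesis to rule out plateaus at the critical level, is the technically delicate point of the whole proof.
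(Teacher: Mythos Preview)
Your treatment of $v$ (verification of \eqref{eq:pde_integral_intro} via $v(s,s)=\ell(s)$, mass conservation by Fubini, transfer of the unilateral Lipschitz bound through the nonincreasing measure term) and of the jump inequality for $\ell$ is essentially correct and matches the paper. The gap is in the existence/uniqueness of $\ell$.

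Your $L^1$-contraction route does not close. Carrying out the layer-cake computation you allude to gives, for $\ell_1,\ell_2$ on $[0,\delta]$,
\[
\int_0^\delta |(T\ell_1-T\ell_2)(t)|\,dt
\;\le\;
\int_0^\delta \int_r^\delta \bigl|1_{\{\ell_1(r)+r>t\}}-1_{\{\ell_2(r)+r>t\}}\bigr|\,dt\,dr
\;\le\;
\int_0^\delta |\ell_1(r)-\ell_2(r)|\,dr,
\]
i.e.\ Lipschitz constant exactly $1$, not $<1$; no $\BV$ or coarea input improves this, because the obstruction is precisely the plateau case $\ell(r)+r\equiv t$ on a set of positive measure, which your argument never actually excludes. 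You acknowledge this as ``the technically delicate point'' but leave it unresolved, so both existence and your uniqueness-by-contraction step are incomplete as written.

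The paper sidesteps the contraction issue entirely by a \emph{causal recursion}. The positivity hypothesis $\inidat\geq\alpha$ on $[0,\alpha)$ is first upgraded to the global lower bound $\ell\geq\alpha$ on $[0,\infty)$ (a short contradiction argument, Proposition~\ref{prop:bound_of_l_from_below}). Consequently, for every $t$ and every $s\in[0,\alpha)$ one has $\ell(t-s)\geq\alpha>s$, so the portion $s\in[0,\alpha)$ of the defining set always contributes exactly $\alpha$; the equation becomes
\[
\ell(t)=\inidat(t)+\alpha+\bigl|\{s\in[\alpha,t]:\ell(t-s)>s\}\bigr|,\qquad t\geq\alpha,
\]
and now the right-hand side for $t\in[k\alpha,(k+1)\alpha]$ involves only $\ell$ on $[0,k\alpha]$. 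This gives an explicit inductive definition of $\ell$ on successive intervals $[k\alpha,(k+1)\alpha]$, with $\ell(t)=\inidat(t)+t$ on $[0,\alpha]$ as the base case (the formula you yourself noticed). Uniqueness is then automatic from the same recursion---no contraction estimate is needed. For merely nonnegative $\inidat$, existence is obtained by monotone approximation $\inidat+\delta\downarrow\inidat$ and the comparison principle $\inidat_1\leq\inidat_2\Rightarrow\ell_1\leq\ell_2$. You had the seed of this idea in your uniqueness paragraph but did not exploit it for the construction.
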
  

Theorem \ref{teo:main_uno} essentially says that the construction of a
solution of \eqref{eq:pde_integral_intro} is equivalent to solve the nonlocal
equation \eqref{eq:elle_intro} describing the free boundary: this is not
immediate, and requires some recursive argument.  As already said, equation
\eqref{eq:elle_intro} has a geometric meaning (see Remark
\ref{rem:geometric_meaning} and Section
\label{subsec:transformation_R_and_rearrangement}) and has also
various equivalent formulations (see the end of Section
\ref{sec:another_formulation}): one of them, see \eqref{eq:b_B_u_0}, is based
on the generalized inverse of the vertical rearrangement of the function
$L_\uM(t) := \ell(t) + t$, and may be used for the explicit construction of
$\ell$ in specific examples.

The main difficulties in the proof of Theorem \ref{teo:main_uno} are due to
the fact that, at a point $(\overline \tau,\overline \xi)$, a characteristic
line $\{\xi = \overline \xi\}$ passing through $(\overline \tau, \overline
\xi)$, may intersect several (possibly infinitely many) times both the
subgraph and the epigraph of $\uM$, before ending on the vertical axis
$\{\tau =0\}$ where keeping the value of $\inidat$; this happens in
correspondence of critical slopes of $\uM$, corresponding (roughly) to
regions where $L_\uM$ becomes strictly decreasing and $L_\uM$ loses
monotonicity. As already said, in correspondence of these critical values
function $\uM$ and the solution $v$ exhibit interesting features. 

It is worthwhile to remark that equation \eqref{eq:pde_intro} admits a
Lyapunov functional, which is suggested by looking at the discrete Lyapunov
functional known for the open discrete mancala \cite{Pao}, see
\eqref{eq:discrete_Lyapunov_functional} and Section
\ref{sec:a_Lyapunov_functional}.

It is immediate to devise a class of stationary solutions for the flow, i.e.,
$u(x)=(\sqrt{2m}-x) 1^{}_{[0,\sqrt{2m}]}$ (Example
\ref{exa:stationary_solutions}), $m \geq 0$ being the initial mass.  In an
interesting example (the Riemann problem corresponding to the initial datum
$\inidat = 1_{[0,1)}$, discussed in Section \ref{sec:the_Riemann_problem})
it turns out that the Lyapunov functional\footnote{A Lipschitz function in
time.} is constant unless in proximity of a critical time, after which it is
strictly decreasing for a while.  The Riemann problem is a nice example that
illustrates a rather complex solution starting from a very simple initial
condition, and that shades light on the main phenomena behind equation
\eqref{eq:pde_intro}, in particular the vertical rearrangement.  As already
explained, the main point is to construct the function $\uM$, and this can be
done recursively, taking advantage of the fact that $\uM$ must be
polygonal\footnote{
One can see that the flow
starting from a polygonal initial condition (i.e., a piecewise affine
possibly
discontinuous function, see Definition
\ref{def:piecewise_affine}) generates a
piecewise affine solution, a useful fact for the explicit
computation of solutions.}
, and constructing the sequences of slopes connecting pairs of local
minima and local maxima. The explicit construction of $\uM$, though not in
closed form, is given by recurrence, and is rather intricated, due to the
unavoidable presence of (a countable number of) critical slopes, and
therefore of a multivalued\footnote{In this case there cannot be more than
three values.} graph; the most involved case is when two vertical
rearragements need to be performed in adjacent segments (see the first
picture in Fig. \ref{fig:slopes}). It is proven in Section 
\ref{sec:the_Riemann_problem} that the corresponding
stationary solution is reached in an infinite time.

The content of the paper is the following. In Section
\ref{sec:the_discrete_game}, after briefly recalling the moves in the open
mancala game, we describe the rescaling leading to
\eqref{eq:discrete_equation_h_intro}. In Section
\ref{sec:the_continuous_problem:t_x_formulation} we consider the $(t,x)$
formulation of the PDE, and its transformed version in $(\tau,\xi)$
variables, leading to the notion of integral solution (Definition
\ref{def:integral_solution}).  In Section \ref{sec:another_formulation} we
introduce the equation solved by $\uM$, the graph of which will be the free
boundary. Global existence and uniqueness of $\uM$, as well as some
qualitative properties, are proven in Theorem \ref{teo:existence_of_l}.  The
geometric meaning of equation \eqref{eq:elle_intro} in terms of the affine
transformation and the vertical rearrangement is explained in Remark
\ref{rem:geometric_meaning} and Section
\ref{subsec:transformation_R_and_rearrangement}. The last part of Section
\ref{sec:another_formulation} is concerned with a more general class of
initial data, in particular those that may vanish at zero. In Section
\ref{sec:construction_of_an_integral_solution} we prove the existence and
uniqueness of an integral solution to \eqref{eq:pde_intro}, in particular the
solvability of \eqref{eq:global_intro}.  In Section
\ref{sec:a_Lyapunov_functional} we study the Lyapunov functional associated
to \eqref{eq:pde_intro}.  Section \ref{sec:examples} contains some initial
examples. Section \ref{sec:the_Riemann_problem} is essentially devoted to the
explicit construction of $\uM$ and the solution for the Riemann problem,
which are rather involved. The construction of $\uM$ is given in Theorem
\ref{teo:the_polygonal_is_the_graph_of_ell}, on the basis of the algorithm
described in \eqref{eq:slopes}, \eqref{eq:tmax_tmin_no_rearrangement}, and
\eqref{eq:tmax_tmin_rearrangement}.  How to directly recover,
a computable way, the integral
solution from the knowledge of $\uM$ is explained in Remark
\ref{rem:from_l_to_v}, see also Fig.  \ref{fig:riemann_specific_times}.  A
movie of the solution is illustrated in Figs. \ref{fig:Riemann_movie_I},
\ref{fig:Riemann_movie_II}. A few final examples are described 
in Section \ref{sec:final_examples}.

We conclude this introduction by remarking that the rigorous asymptotic
analysis of the discrete model as $h \to 0^+$ is out of the scope of the
present paper, and will be investigated elsewhere.

\section{Motivation: the discrete dynamical system and its rescaling}
\label{sec:the_discrete_game}
Let $\N$ be the nonnegative integers, and $\N^*$ be the positive
integers. Following \cite{Pao}, a configuration $\lambda$ is a map
$\lambda\colon\N^* \to\N$, and a mancala configuration is a configuration
$\lambda$ such that $\supp(\lambda):= \{\,i\,\in \N^*: \lambda_i >0\,\}$ is
connected, $\supp(\lambda)=\{1,2,\dots,{\rm len}(\lambda)\}$, with ${\rm
len}(\lambda) \in \N^*$ called the length of the configuration.  The {\it
mass} of a mancala configuration $\lambda$ is defined as
$|\lambda|:=\sum_{i=1}^{+\infty}\lambda_i
=\sum_{i=1}^{\len(\lambda)}\lambda_i$.  We denote with $\Lambda$ the set of
all mancala configurations, and with $\Lambda_\mass$ the set of all mancala
configurations with mass $\mass \geq 0$.

The {\it open mancala game} is a discrete dynamical system associated with a
function $\mathcal{M}\colon\Lambda\to\Lambda$; the action of $\M$ on a
mancala configuration $\lambda$ is defined by
\begin{equation}\label{eq:sowing}
\M(\lambda)_j:=\begin{cases}
\lambda_{j+1}+1 & \hbox{if} ~1\le j\le\lambda_1,\\
\lambda_{j+1}   & \hbox{if}~ j>\lambda_1.
\end{cases}
\end{equation}
Conventionally we also assume $\M(000\cdots)=000\cdots$.

For clarity, it is convenient to split $\M$ into two elementary
``half-moves'': the sowing move $\mathcal{S}$, which consists in taking all
seeds in the leftmost hole ($j=1$) and redistribute them between the next
holes by putting one seed for each hole until the seeds are over,
\[\mathcal{S}(\lambda)_j:=\begin{cases}
0               & \hbox{if}~ j=1,\\
\lambda_j+1   & \hbox{if}~ 2\le j\le\lambda_1+1,\\
\lambda_j     & \hbox{if}~ j>\lambda_1+1,
\end{cases}\]
and the left-shift move\footnote{This is done for convenience in order to obtain
a reference system following the configuration, and corresponds to a right
translation of the observer.  } $\mathcal{L}$, which shifts to the left all
holes by one,
\[\mathcal{L}(\lambda)_j:=\lambda_{j+1}, \qquad j \geq 1.\]
The composite map $\mathcal{L}\circ\mathcal{S}$ maps mancala configurations
into mancala configurations and indeed we have
$\mathcal{L}\circ\mathcal{S}(\lambda) =\M(\lambda)\colon\Lambda_\mass \to
\Lambda_\mass$.

It is natural to introduce an integer parameter $\kappa \geq 0$, standing for
the discrete time; given a configuration $\lambda^\kappa \in \Lambda_m$, the
sowing half-move is
\begin{equation}\label{eq:first_half_move}
\begin{cases}
\lambda_1^{\kappa+\frac{1}{2}} := 0 & {}\\
\lambda_j^{\kappa+ \frac{1}{2}}
:= \lambda_j^\kappa + 1 & {\rm if}~
2 \leq j \leq \lambda_1^\kappa + 1,\\
\lambda_j^{\kappa+ \frac{1}{2}}
:= \lambda_j^\kappa & {\rm if}~
j > \lambda_1^\kappa + 1,
\end{cases}
\end{equation}
and the left-shift half-move is
\[\lambda_j^{\kappa+ 1}:= \lambda_{j+1}^{\kappa+\frac{1}{2}}.\]
The composition of the two half-moves gives therefore the move
\[
\begin{cases}
\lambda_{j-1}^{\kappa+ 1}
= \lambda_j^\kappa + 1 & {\rm if}~
2 \leq j \leq \lambda_1^\kappa + 1,
\\
\lambda_{j-1}^{\kappa+ 1}
= \lambda_j^\kappa  & {\rm if}~
j > \lambda_1^\kappa + 1,
\end{cases}\]
i.e.,
\begin{equation}\label{eq:prima_formulazione_discreta}
\lambda_{j-1}^{\kappa+ 1}
= \lambda_j^\kappa + 
1_{I^\kappa(\lambda)}(j), \qquad 
{\rm where}~
I^\kappa(\lambda)
:=\{\,j\in \mathbb N:
2 \leq j\le\lambda^\kappa_1 + 1\,\},
\end{equation}
with $1_{I^\kappa(\lambda)}(j) = 1$ if $j \in I^\kappa(\lambda)$ and 
$0$ else.

\begin{Remark}[\textbf{Conservation of mass}]\rm
It is clear that mass is conserved:
\[\mass = \sum_{j=0}^{+\infty} \lambda_j^\kappa \qquad \forall \kappa \geq
0.\]
\end{Remark}

\begin{Remark}[\textbf{A discrete Lyapunov functional}]
\label{sec:a_discrete_Lyapunov_functional}\rm
In the discrete setting a Lyapunov functional, mimicking a gravitational
potential \cite{Pao}, can be defined as
\begin{equation}\label{eq:discrete_Lyapunov_functional}
\Lyap(\lambda) = \sum_{\substack{
(i,j) \in \N^* \times \N^*\\
i \leq j < i + \lambda_i}} j.
\end{equation}
\end{Remark}

Now, we suitably rescale the discrete dynamical system 
\eqref{eq:prima_formulazione_discreta}.

\subsection{Rescaling}\label{sec:rescaling_the_discrete_problem}
From \eqref{eq:prima_formulazione_discreta} it immediately follows
\begin{equation}
\label{eq:seconda_formulazione_discreta}
\lambda_{j}^{\kappa+ 1} -\lambda_{j}^{\kappa} 
= 
\lambda_{j}^{\kappa+ 1} - \lambda_{j-1}^{k+1}  
+ 1_{I^\kappa(\lambda)}(j).
\end{equation}

Now we rescale $j$, $\kappa$ and $\lambda$. If $0 < h \ll1$, we imagine a
grid $G_h$ in time-space $[0,+\infty) \times [0,+\infty)$ of the form $\{\,(kh,
ih) : k,h \in \N^*\,\}$. Next we substitute\footnote{Notice the hyperbolic
rescaling: time and space are rescaled the same way.}  in
\eqref{eq:seconda_formulazione_discreta} $kh$ in place of $\kappa$, $ih$ in
place of $j$, and we write $\lambda = u/h$.  The constraint $j \in
I^\kappa(\lambda)$ is transformed into
\[ih \in \{\,n h \in \N h : 2h \leq nh \leq (\lambda_1^\kappa+1)h\,\}
= \{\,n h \in \N h: 2h \leq n h \leq u_h^{kh}+h\,\}.
\]
We get from \eqref{eq:seconda_formulazione_discreta}
\begin{equation}\label{eq:discrete_equation_h}
\frac{u_{ih}^{(k+1)h}-u_{ih}^{kh}}{h} 
= 
\frac{u_{ih}^{(k+1)h}-u_{(i-1)h}^{(k+1)h}}{h}
+ 
1_{\left\{2h\leq\;\cdot\;\leq u_{h}^{kh} + h\right\}}(ih)
\end{equation}
for $i \geq 1$, $k \geq 0$.
We couple this rescaled dynamical system with the initial condition
\begin{equation}\label{eq:inidat_discrete_rescaled}
u_{ih}^0 = \inidat(ih), \qquad i \geq 1.
\end{equation}
Our aim is to study a continuous version of \eqref{eq:discrete_equation_h},
\eqref{eq:inidat_discrete_rescaled}.  If we interpret the quantities
$u^{kh}_{ih}$ as the values that a map $(t,x)\in \oic\times\oic \mapsto
u(t,x)$ takes on the points of the grid $G_h$ (i.e. $u^{kh}_{ih}=u(kh,ih)$)
then the equation \eqref{eq:discrete_equation_h} suggests that $u$ formally
satisfies the PDE in \eqref{eq:pde_intro}.

\begin{Remark}\rm
We have
\[\mass = \frac{1}{h} \sum_{i=1}^{+\infty} u_{ih}^{kh},\]
and the series is a finite sum.
\end{Remark}

\section{The continuous problem: a singular transport equation}
\label{sec:the_continuous_problem:t_x_formulation} 
$1_A$ stands for the characteristic function of the set $A \subset \R^k$,
$k=1,2$, i.e., $1_A(x)=0$ if $x \in A$ and $1_A(x)=0$ if $x
\in\R^k \setminus A$.
  $\vert \cdot \vert$ denotes the Lebesgue measure in $\R$. All
functions we consider are nonnegative and Lebesgue measurable.

We denote by $\BV_{\rm loc}([0,+\infty))$ the class of all functions $v$ of
finite pointwise variation in $[0,a]$, for any $a >0$.  We recall \cite{AmFuPa:00}
that $v$ is
bounded in $[0,a]$, there exists finite $\lim_{x \to 0^+} v(x) =: v(0)$,
and $v$ admits finite right and left limits at any $x \in
(0,+\infty)$.  We also let \cite{GiMoSo:98}
\begin{equation}\label{eq:subgraphs}
 {\rm subgr}_+(v) := \{\,(t,x) \in \oi \times \oi: v(t) > x\,\}
\end{equation}
be the subgraph of $v$ in $\oi \times \oi$; its reduced boundary in
$(0,+\infty) \times (0,+\infty)$ is given by the generalized graph of $v$,
i.e., the graph of $v$ with the addition of vertical segments joining the
left and the right limits at the jump points.

\begin{Definition}[\textbf{Polygonal function}]\label{def:piecewise_affine}
We say that $v\in \BV_{{\rm loc}}([0,+\infty))$ is polygonal if its generalized graph consists, in any compact
set $K \subset [0,+\infty)$, of a finite
number of segments (vertical segments are allowed\footnote{$v$ can be
discontinuous, with a finite number of jump points in $K$.}).
\end{Definition}

Recall that if $v\colon[0,+\infty)\to[0,+\infty)$, then the function $\lambda
\in \oi \mapsto \vert \{\,x \in [0,+\infty) : v(x) > \lambda\,\}\vert$ is
nonincreasing and right-continuous.

We start the study of the PDE in \eqref{eq:pde_intro}; from now on, we always
assume:
\begin{equation}\label{eq:inidat_from_now_on}
\inidat \in \BV_{\rm loc}([0,+\infty)), 
\qquad \inidat  \hbox{ right continuous and nonnegative}.
\end{equation}

\subsection{$(\tau,\xi)$-formulation}
Duly motivated by \eqref{eq:discrete_equation_h} we consider problem
\eqref{eq:pde_intro}, for which we need a rigorous notion of solution.  Let
be given a nonnegative function $v$
defined everywhere on  $\firstquadrantopen$, and suppose that for any $t > 0$ there exists $\lim_{x
\to 0^+} v(t,x) =: v(t,0) \in \oi$.

\begin{Definition}[\textbf{Active and tail regions; free
boundary}]\label{def:active_and_tail_regions} We call
\[\activereg(v):=\{\,(t,x) : t> 0,\; 0<x < v(t,0)\,\},
\qquad \tailreg(v) := \{\, (t,x) : t> 0,\; x > v(t,0)\,\},\]
the active region\footnote{The active region coincides with ${\rm
subgraph}_+(v(\cdot,0))$.} and the tail region (of $v$), respectively.  We
also define the free boundary $\freebound(v)$ as
\begin{equation}
\label{eq:Gamma_u}
\freebound(v)
:= \{\,(t,x)\in \firstquadrantopen : x = v(t,0)\,\}.
\end{equation}
\end{Definition}

The problem that we want to study reads {\it formally} as
\begin{equation}\label{eq:limit_problem_x}
\begin{cases}
u_t(t,x) = u_x (t,x) + 1_{(0,u(t,0))}(x) 
& {\rm if}~(t,x) \in (0,+\infty) \times (0,+\infty),\\
u(0,x) = \inidat(x) & {\rm if}~ x \in \oi.
\end{cases}
\end{equation}

Observe that 
\[1_{(0,u(t,0))}(x) =
1_{(x,+\infty)}(u(t,0)) \qquad \forall x > 0\]
and that the PDE reads as 
\[u_t(t,x) = 
\begin{cases}
u_x (t,x)  & {\rm if~} x > u(t,0), 
~{\rm i.e.},~ (t,x) \in T(u), \\
u_x (t,x) + 1 & {\rm if~} x < u(t,0),
~{\rm i.e.},~ (t,x) \in A(u).
\end{cases}\]

A natural sense in which we can interpret \eqref{eq:limit_problem_x} is
obtained using transformation $\Phi$ in
\eqref{eq:linear_change_of_variables}.  Given a function $v\colon
\firstquadrantopen \to [0,+\infty)$, define
\begin{equation}\label{eq:change}
\widehat v(\tau,\xi) := v(\Phi^{-1}(\tau,\xi)) = v(t,x)
\qquad \forall (\tau, \xi) \in \secondoctantopen.
\end{equation}
Then formally the PDE in \eqref{eq:limit_problem_x} transforms into
\begin{equation}
\label{eq:limit_problem_xi}
\begin{cases}
\wu_\tau(\tau, \xi) =  
1_{(0,\wu(\tau,\tau))}(\xi-\tau) =
1_{(\tau, \wu(\tau,\tau) + \tau)}(\xi)
& {\rm if}~ (\tau,\xi) \in \soctopen,\\
\wu(0,\xi) = \inidat(\xi) & {\rm if}~ \xi \in \oi.
\end{cases}
\end{equation}
Active (resp. tail) region $\activereg(u)$ (resp. $\tailreg(u)$) transforms
into
\[\activereg(\wu) 
= \{\,(\tau,\xi) : \tau > 0,\; \tau < \xi < \wu(\tau,\tau) + \tau\,\},
\quad
\tailreg(\wu) = \{\,(\tau,\xi) : \tau > 0,\; \xi > \wu(\tau,\tau)+\tau\,\},\]
and the set $\freebound(u)$ in \eqref{eq:Gamma_u} transforms into
\begin{equation}\label{eq:Gamma_u_nuove_variabili}
\freebound(\wu) = \{\,(\tau,\xi) : \xi > \tau,\; \xi =\wu(\tau,\tau)+\tau
\,\}.
\end{equation}

We are now in a position to define what we mean by 
a solution of \eqref{eq:limit_problem_x}.

\begin{Definition}[\textbf{Integral solution}]\label{def:integral_solution}
We say that a locally bounded function $\wu\colon\soctopen\to\oi$ defined
everywhere is an integral solution to \eqref{eq:pde_intro} if the following
conditions hold:
\begin{itemize}
\item[(i)]  for any $\tau >0$ 
\[\exists \lim_{\xi\to \tau^+} \wu(\tau, \xi) =: \wu(\tau,\tau) \in \oi;\]
\item[(ii)] 
for any $(\tau, \xi) \in \soctopen$, 
\begin{equation}\label{eq:equazione_integrale_0}
\wu(\tau,\xi) = \inidat(\xi) + \int_0^\tau 1_{(s, \wu(s,s) + s)}(\xi)\,ds.
\end{equation}
\end{itemize}
\end{Definition}

Some comments are in order.
\begin{itemize}
\item[(a)] if $\wu$ is an integral solution then, for any $\xi >0$, the
function $\tau \in (0,\xi) \mapsto \wu(\tau, \xi)$ is nondecreasing and
one-Lipschitz.
\item[(b)] For any $(\tau, \xi) \in \secondoctantopen$ we have\footnote{In the 
first equality of \eqref{eq:so_that} we 
use that, 
if $f\colon \oi \to \oi$ is measurable and $I \subseteq \oi$,
then 
$\int_I f ds = \int_{(0,+\infty)} \vert I \cap \{f > \lambda\}\vert\,
d\lambda$.}
\begin{equation}\label{eq:so_that}
\begin{aligned}
&\int_0^\tau 1_{(s, \wu(s,s) + s)}(\xi)\, ds
= \int_0^1 \vert \{\, s \in [0,\tau] : 1_{(s, \wu(s,s) + s)}(\xi)>\lambda
\,\}
\vert\,d\lambda \\
=& \int_0^1\vert \{\, s \in [0,\tau] : s <\xi < \wu(s,s) + s\,\}
\vert\,d\lambda = 
\vert\{\,s \in [0,\tau] : 
s < \xi < \wu(s,s) + s \,\}\vert \\
= & \vert \{\,s \in [0,\tau] : \wu(s,s) + s> \xi\,\}\vert,
\end{aligned}
\end{equation}
so that \eqref{eq:equazione_integrale_0}
is equivalent to 
\begin{equation}\label{eq:integral_is_equivalent}
\wu(\tau,\xi) = \inidat(\xi) +\vert\{\,s \in [0,\tau] : 
\wu(s,s) + s  >  \xi\,\}\vert.
\end{equation}
In particular, since $\inidat$ is right-continuous, passing to the limit as
$\xi \to \tau^+$ and using (i),
\begin{equation}\label{eq:mistery} 
\wu(\tau,\tau) = \inidat(\tau) + 
 \vert \{\, s \in [0,\tau] : \wu(s,s) + s
 >  \tau \,\}\vert 
\end{equation}
for all $\tau \in (0,+\infty)$.
\end{itemize}

We conclude this section with a notation: given $\overline t>0$, we let
$\halfline_{\overline t} =\{\,(t,\incrt) : \incrt \geq 0,\; \incrt=h_{\overline
t}(t)\,\}$ be the half-line pointing up-left at $45^o$ and passing through $(\overline
t,0)$, defined by $h_{\overline t}(t) = \overline t-t$.

\section{Solving the equation for the free
boundary}\label{sec:another_formulation}
In this section we solve an auxiliary problem (see Theorem
\ref{teo:existence_of_l}) which, in view of the results in Section
\ref{sec:construction_of_an_integral_solution}, is essentially equivalent to
find an integral solution of \eqref{eq:pde_intro}: the idea is to look for an
expression of what, a posteriori, will be the curve $\freebound(\wu)$ and
then to reconstruct $\wu$ itself (and hence $u$) by the method of
characteristics, distinguishing the active and the tail regions; here a
difficulty arises since characteristic lines may pass from the active region
to the tail region several times.  Recall our assumption
\eqref{eq:inidat_from_now_on} on $\inidat$, and keep in mind
\eqref{eq:mistery}.

\begin{Definition}[\textbf{The function
$\unknownMau$}]\label{def:the_function_l}
We say that the graph of a function $\unknownMau = \unknownMau[\inidat]\colon
\oi \to \oi$ represents the free boundary of problem \eqref{eq:pde_intro} if
\begin{equation}
\label{eq:how_to_reconstruct_l_from_itself}
\unknownMau(t) = \inidat(t) + \vert\{\,\incrt \in [0,t] : 
\unknownMau(t-\incrt) >\incrt \,\}\vert 
\qquad \forall t \in \oi.
\end{equation}
\end{Definition}

In what follows, we shall frequently use the obvious equality
\begin{equation}\label{eq:obv}
\vert\{\,\incrt \in [0,t] : \unknownMau(t-\incrt) 
>\incrt\,\}\vert = \vert\{\,\incrt \in [0,t] : \unknownMau(\incrt) 
+ \incrt > t\,\}\vert.
\end{equation}

Notice that, since $\inidat$ is right-continuous, 
also $\unknownMau$ is right-continuous; moreover
\[\unknownMau(0) = \inidat(0).\]

\begin{Remark}[\textbf{Geometric meaning}]\label{rem:geometric_meaning}\rm
Formula \eqref{eq:how_to_reconstruct_l_from_itself} has a geometric meaning:
recalling the notation in the end of Section
\ref{sec:the_continuous_problem:t_x_formulation}, consider the half-line
$\halfline_t=\{(t-\incrt,\incrt): \incrt > 0\}$.  Next, take the intersection
$I_t$ of $\halfline_t$ with ${\rm subgraph}_+(\uM)= \{(t,s) \in (0,+\infty)
\times (0,+\infty) : \uM(t)>s \}$, so that
\[I_t=\{\,(t,\incrt) : \incrt \in [0,t],\; \unknownMau(t-\incrt)>
\incrt\,\}.\]
Denote by $\pi_2\colon\oi \times \oi \to \{0\} \times \oi$ 
the orthogonal projection on the vertical $\incrt$-axis. 
Then 
\begin{equation}\label{eq:geometric_meaning}
\unknownMau(t) = \inidat(t) +
\vert \pi_2 (I_t)\vert, \qquad t \geq 0.  \end{equation}
In particular, if $\inidat$ is polygonal, then $\uM$ is 
polygonal\footnote{Recall that jumps are not excluded.}.
\end{Remark}

Proving that \eqref{eq:how_to_reconstruct_l_from_itself} has a solution is
nontrivial; in order to do that we start to show that {\it if
\eqref{eq:how_to_reconstruct_l_from_itself} has a solution} and $\inidat$ has
a suitable behaviour close to the origin, then $\unknownMau$ cannot go to
zero at any point of $[0,+\infty)$.
 
\begin{Proposition}[\textbf{Lower bound}]\label{prop:bound_of_l_from_below}
Suppose that $\uM$ solves \eqref{eq:how_to_reconstruct_l_from_itself} and 
furthermore
\begin{equation}\label{eq:inidat_larger_than_alpha}
\exists \alpha>0: \inidat(t)\geq \alpha \qquad \forall t \in [0,\alpha).
\end{equation}
Then 
\begin{equation}\label{eq:l_larger_than_alpha}
\unknownMau(t) \geq \alpha \qquad \forall t \in \oi.
\end{equation}
\end{Proposition}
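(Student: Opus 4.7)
I would argue by contradiction. Set
$$T^* := \sup\{\,t \geq 0 : \unknownMau(r) \geq \alpha \text{ for all } r \in [0,t]\,\}$$
and aim to show $T^* = +\infty$, which is \eqref{eq:l_larger_than_alpha}. The engine of the proof is a bootstrapping observation read off directly from \eqref{eq:how_to_reconstruct_l_from_itself}: if $\unknownMau \geq \alpha$ on $[0,T)$ and $T \geq \alpha$, then for every $s \in (0,\alpha)$ one has $T-s \in [0,T)$, hence $\unknownMau(T-s) \geq \alpha > s$; therefore the set in \eqref{eq:how_to_reconstruct_l_from_itself} at $t=T$ contains $(0,\alpha)$, and $\unknownMau(T) \geq u_0(T) + \alpha \geq \alpha$.

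Applied with $T = \alpha$, and combined with the trivial pointwise bound $\unknownMau \geq u_0 \geq \alpha$ on $[0,\alpha)$ coming from \eqref{eq:inidat_larger_than_alpha} and the nonnegativity of the measure term in \eqref{eq:how_to_reconstruct_l_from_itself}, this already yields $T^* \geq \alpha$. Applying the same observation at $t = T^*$, using $\unknownMau \geq \alpha$ on $[0,T^*)$, gives $\unknownMau(T^*) \geq \alpha$, so in fact $\unknownMau \geq \alpha$ on the closed interval $[0,T^*]$.

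It remains to rule out $T^* < +\infty$. By definition of the supremum there is a sequence $r_n \downarrow T^*$ with $\unknownMau(r_n) < \alpha$. Right-continuity of $\unknownMau$ (noted just after Definition~\ref{def:the_function_l}) then forces $\unknownMau(T^*) = \lim_n \unknownMau(r_n) \leq \alpha$, and combined with $\unknownMau(T^*) \geq \alpha$ pins $\unknownMau(T^*) = \alpha$; it also yields $\delta_0 > 0$ such that $\unknownMau(r) > \alpha/2$ for every $r \in [T^*, T^* + \delta_0)$. For $n$ large enough that $\eps_n := r_n - T^* < \min(\alpha/2, \delta_0)$, I would split the set in \eqref{eq:how_to_reconstruct_l_from_itself} at $t = r_n$ into three consecutive slices $[0,\eps_n)$, $\{\eps_n\}$ and $(\eps_n, \alpha)$. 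For $s \in (\eps_n, \alpha)$ one has $r_n - s \in [0, T^*)$ and hence $\unknownMau(r_n-s) \geq \alpha > s$; at $s = \eps_n$, $\unknownMau(r_n-s) = \unknownMau(T^*) = \alpha > \eps_n = s$; for $s \in [0, \eps_n)$, $r_n - s \in (T^*, T^* + \delta_0)$ gives $\unknownMau(r_n-s) > \alpha/2 > s$. Together the three slices cover $[0,\alpha)$, so the measure is at least $\alpha$, whence $\unknownMau(r_n) \geq u_0(r_n) + \alpha \geq \alpha$, contradicting $\unknownMau(r_n) < \alpha$.

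The principal delicacy is exactly this last step: since $\unknownMau$ is only right-continuous (and of locally bounded variation), the bound $\unknownMau \geq \alpha$ on $[0,T^*]$ does not automatically extend past $T^*$. The pointwise identification $\unknownMau(T^*) = \alpha$, combined with the quantitative one-sided estimate $\unknownMau > \alpha/2$ on $[T^*, T^* + \delta_0)$, is precisely what supplies the missing information on the thin strip $(T^*, r_n]$, i.e.\ the region in which the elementary bootstrapping observation alone has no information.
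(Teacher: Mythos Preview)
Your proof is correct and follows the same overall architecture as the paper's: argue by contradiction, locate the first time $t_0$ (your $T^*$) at which the bound might fail, and show that the identity \eqref{eq:how_to_reconstruct_l_from_itself} forces $\ell\geq\alpha$ slightly beyond $t_0$. In both arguments the set $\{s\in[0,t]:\ell(t-s)>s\}$, for $t$ just past $t_0$, is split into a ``far'' piece $s\in(\eta,\alpha)$ (where $t-s<t_0$ and the known bound applies) and a ``near'' piece $s\in[0,\eta]$; the far piece alone already gives measure $\alpha-\eta$.

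The difference lies in how the near piece is handled. The paper does not invoke right-continuity of $\ell$: it runs a two-pass self-improvement, noting that the far piece alone yields $\ell(t)\geq\alpha-\eps$ for every $t\in(t_0,t_0+\eps)$, and then feeding this weaker bound (with $\eps<\alpha/2$) back into the near piece on a second pass. You instead use right-continuity of $\ell$ at $T^*$ to produce the lower bound $\ell>\alpha/2$ on $[T^*,T^*+\delta_0)$ directly. This is legitimate, since right-continuity is recorded just after Definition~\ref{def:the_function_l}, and it makes the near-piece estimate a one-liner; the paper's version, by contrast, is entirely self-contained, using nothing about $\ell$ beyond \eqref{eq:how_to_reconstruct_l_from_itself} and nonnegativity of $u_0$. (A small aside: your identification $\ell(T^*)=\alpha$ is correct but not actually needed---the bound $\ell(T^*)\geq\alpha>\alpha/2$ already suffices to extract $\delta_0$.)
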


Assumption \eqref{eq:inidat_larger_than_alpha} says that we can pick a square
$Q=(0,\alpha) \times (0,\alpha)$ contained in ${\rm subgraph}_+(\uM)$, and
\eqref{eq:l_larger_than_alpha} says that we can slide $Q$ horizontally
remaining inside ${\rm subgraph}_+(\uM)$, and this is crucial in what
follows; dropping this assumption is discussed in Theorem
\ref{teo:if_we_drop} and in Section
\ref{subsec:initial_condition_vanishing_at_the_origin}.

\begin{proof}
Suppose by contradiction that \eqref{eq:l_larger_than_alpha} 
is false, and define 
$t_0 := \inf \{\, t \in \oi: \unknownMau(t) < \alpha\,\}$.
From \eqref{eq:inidat_larger_than_alpha} and
\eqref{eq:how_to_reconstruct_l_from_itself} it follows
\begin{equation}\label{eq:t_0_alpha} 
t_0 \geq \alpha.
\end{equation}
Fix now $\eps \in (0,\alpha)$, that will be selected
later (see \eqref{eq:prov}). Pick $t \in (t_0, t_0 + \eps)$,
and write $t = t_0 + \eta$,
with $\eta \in (0,\eps)$. In particular, $\eta < \alpha$.
Set
\[\primoinsieme
:= \{\,\incrt \in [0,\eta) : \unknownMau(t-\incrt)
> \incrt\,\}, \qquad\secondoinsieme
:=\{\,\incrt \in [\eta,\alpha) : \unknownMau(t-\incrt)> 
\incrt\,\}.\]
From \eqref{eq:how_to_reconstruct_l_from_itself} and recalling that $\inidat$
is nonnegative it follows
\begin{equation}\label{eq:l_geq_AB}
\begin{aligned}
\unknownMau(t) &\geq \vert \{\,\incrt \in [0,t] : 
\unknownMau(t-\incrt)>\incrt \,\}\vert\\
&= \vert \primoinsieme\vert +\vert \{\,\incrt \in [\eta,t] : 
\unknownMau(t-\incrt)> \incrt\,\}\vert
 \geq  \vert \primoinsieme \vert + \vert \secondoinsieme\vert,
\end{aligned}
\end{equation}
where in the last inequality we have used \eqref{eq:t_0_alpha}, so that $t >
t_0 \geq \alpha$.

\medskip\noindent
{\it Claim 1}: We have $\vert \secondoinsieme\vert  = \alpha - \eta$.

Indeed, if $\incrt \in \secondoinsieme$,
in particular
$\incrt \geq \eta$, and so
$t-\incrt = t_0 + \eta - \incrt \leq t_0$.
Hence, recalling the definition of $t_0$, we have $\unknownMau(t-\incrt) \geq \alpha$.
But for $\incrt \in \secondoinsieme$ we have $\incrt < \alpha$, and 
so $\unknownMau(t-\incrt) > \incrt$, which implies
$\vert \secondoinsieme\vert = \alpha - \eta$.

{}From \eqref{eq:l_geq_AB}, claim 1 and $\eta \in (0,\eps)$ it follows
\begin{equation}\label{eq:intermediate}
\unknownMau(t) \geq \alpha - \eps \qquad \forall t \in (t_0, t_0 + \eps).
\end{equation}
\medskip

\noindent
{\it Claim 2}: If $\eps < \alpha/2$ then $\vert \primoinsieme\vert = \eta$.

We start to observe that for any $\incrt \in \primoinsieme$ we have $t_0 \leq
t-\incrt < t_0 + \eps$. Indeed, $t- \incrt = t_0 + \eta - \incrt < t_0
+\eps$ since $\eta-\incrt < \eps -\incrt \leq \eps$; on the other hand, $t-
\incrt = t_0 + \eta - \incrt > t_0$ since $\eta-\incrt > 0$, from the
definition of $\primoinsieme$.  Therefore we are allowed to replace $t$ with
$t-\incrt$ in \eqref{eq:intermediate}, and we get
\[\unknownMau(t-\incrt) \geq \alpha-\eps
\qquad
\forall \incrt \in \primoinsieme.\]
Due to our choice of $\eps$, we have $\alpha - \eps > \alpha/2$, and
$\alpha/2> \incrt$, since $\alpha/2 > \eps > \eta$. Hence all $\incrt \in
[0,\eta)$ are points of $\primoinsieme$, and the claim follows.

\medskip
From \eqref{eq:l_geq_AB} and claims 1 and 2 we deduce, provided
\begin{equation}\label{eq:prov}
\eps \in (0,\alpha/2),
\end{equation} 
that
\[\unknownMau(t) \geq \eta + \alpha - \eta = \alpha \qquad \forall t \in
(t_0, t_0 + \eps),\]
which contradicts the definition of $t_0$. 
\end{proof}

\begin{Lemma}\label{lem:transl}
Under assumption \eqref{eq:inidat_larger_than_alpha}, $\unknownMau$ is a
solution of \eqref{eq:how_to_reconstruct_l_from_itself} if and only if
\begin{equation}\label{eq:how_translated}
\unknownMau(t) = 
\begin{cases}
\inidat(t) + t   \qquad & \forall t \in [0,\alpha),\\
\inidat(t) + \alpha + 
\vert\{\,\incrt \in [\alpha,t] : 
\unknownMau(t-\incrt) >\incrt\,\}\vert \qquad & \forall t \in [\alpha,+\infty) .
\end{cases}
\end{equation}
%
\end{Lemma}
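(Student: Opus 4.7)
The plan is to show that the lemma is essentially a direct consequence of the lower bound obtained in Proposition \ref{prop:bound_of_l_from_below}, by splitting the set appearing on the right-hand side of \eqref{eq:how_to_reconstruct_l_from_itself} at the threshold $\incrt=\alpha$.

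First I would assume that $\unknownMau$ solves \eqref{eq:how_to_reconstruct_l_from_itself} and invoke Proposition \ref{prop:bound_of_l_from_below} to get $\unknownMau(t) \geq \alpha$ for every $t \in \oi$. For $t \in [0,\alpha)$, given any $\incrt \in [0,t]$ we have $\incrt \leq t < \alpha \leq \unknownMau(t-\incrt)$, so up to a negligible set
\[\{\,\incrt \in [0,t] : \unknownMau(t-\incrt) > \incrt\,\} = [0,t],\]
whose measure is $t$; substituting in \eqref{eq:how_to_reconstruct_l_from_itself} gives the first line of \eqref{eq:how_translated}. For $t \in [\alpha,+\infty)$, I would split the integration set as
\[\{\,\incrt \in [0,t] : \unknownMau(t-\incrt) > \incrt\,\}
= [0,\alpha) \cup \{\,\incrt \in [\alpha,t] : \unknownMau(t-\incrt) > \incrt\,\},\]
where the first inclusion on the left is genuine because for $\incrt<\alpha$ the inequality $\unknownMau(t-\incrt)\geq \alpha>\incrt$ holds by the lower bound. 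Taking measures yields the second line of \eqref{eq:how_translated}.

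For the converse, I would assume \eqref{eq:how_translated} and first verify that it still forces $\unknownMau\geq\alpha$: on $[0,\alpha)$ one has $\unknownMau(t)=\inidat(t)+t\geq\alpha$ by \eqref{eq:inidat_larger_than_alpha}, and on $[\alpha,+\infty)$ the right-hand side of \eqref{eq:how_translated} is manifestly $\geq \alpha$ since $\inidat$ is nonnegative. With this bound in hand, I would run the same splitting argument backwards: the term $t$ (respectively $\alpha$) appearing in \eqref{eq:how_translated} is precisely the measure of the set $\{\incrt\in [0,\min(t,\alpha)): \unknownMau(t-\incrt)>\incrt\}$, so reassembling the two pieces recovers \eqref{eq:how_to_reconstruct_l_from_itself}.

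There is no substantial obstacle here; the lemma is a bookkeeping consequence of the uniform lower bound $\unknownMau\geq\alpha$, which allows the first portion $[0,\alpha)$ of the integration set to be evaluated explicitly. The only point deserving attention is making sure that in the converse direction the lower bound $\unknownMau\geq\alpha$ is genuinely available from \eqref{eq:how_translated} itself (without circularly invoking Proposition \ref{prop:bound_of_l_from_below}), and this is immediate from assumption \eqref{eq:inidat_larger_than_alpha}.
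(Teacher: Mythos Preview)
Your proof is correct and follows essentially the same approach as the paper: invoke Proposition~\ref{prop:bound_of_l_from_below} for the forward direction, split the measured set at $\incrt=\alpha$, and reverse the computation for the converse. You are in fact slightly more careful than the paper, which dismisses the converse with ``similar'', whereas you explicitly check that the bound $\unknownMau\geq\alpha$ follows directly from \eqref{eq:how_translated} and \eqref{eq:inidat_larger_than_alpha} without circularly appealing to Proposition~\ref{prop:bound_of_l_from_below}.
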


\begin{proof}
Suppose that $\unknownMau$ satisfies
\eqref{eq:how_to_reconstruct_l_from_itself}.  According to
Proposition~\ref{prop:bound_of_l_from_below}, $\unknownMau$ must satisfy
\eqref{eq:l_larger_than_alpha}. Hence, for any $t \in [\alpha, +\infty)$,
\[\begin{aligned}
\unknownMau(t) &= \inidat(t) + 
\vert\{\,\incrt \in [0,\alpha) : \unknownMau(t-\incrt) >\incrt\,\}\vert 
+\vert\{\,\incrt \in [\alpha,t] : \unknownMau(t-\incrt)>\incrt\,\}\vert \\
&=\inidat(t) + \alpha+ \vert\{\,\incrt \in [\alpha,t] : \unknownMau(t-\incrt) 
>\incrt \,\}\vert 
\end{aligned}\]
since, from \eqref{eq:how_to_reconstruct_l_from_itself}, if $\incrt \in
[0,\alpha)$, then $\unknownMau(t-\incrt) \geq \alpha >\incrt$.  The case $t
\in [0, \alpha)$ is a direct consequence of \eqref{eq:l_larger_than_alpha}
and \eqref{eq:how_to_reconstruct_l_from_itself}.

The proof of the converse implication is similar.
\end{proof}

\begin{Theorem}[\textbf{Global existence and uniqueness of
$\unknownMau$}]\label{teo:existence_of_l}
Let $\inidat$ be as in \eqref{eq:inidat_from_now_on}, and suppose that
\eqref{eq:inidat_larger_than_alpha} holds. Then
\eqref{eq:how_to_reconstruct_l_from_itself} admits a unique solution
$\unknownMau \in \BV_{{\rm loc}}(\oi)$. Furthermore
$\uM$ satisfies \eqref{eq:l_larger_than_alpha}, 
$\unknownMau(t) \leq 
\Vert \inidat\Vert_\infty 
 + t$
for all $t \in \oi$, 
and
\begin{equation}\label{eq:l_minus_id_is_decreasing}
\unknownMau(t+\tau)-\unknownMau(t)
\leq 
\inidat(t+\tau)-\inidat(t) + \tau, \qquad t > 0,\; \tau \geq 0.
\end{equation}
\end{Theorem}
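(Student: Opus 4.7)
The plan is to construct $\unknownMau$ by a step-by-step recursion on intervals of length $\alpha$, exploiting the translated form of the equation established in Lemma \ref{lem:transl}, and then to deduce all the qualitative estimates by direct manipulations of the defining formula.

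\smallskip

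\emph{Construction and uniqueness.} First, Lemma \ref{lem:transl} and Proposition \ref{prop:bound_of_l_from_below} together force any solution to coincide with $\inidat(t)+t$ on $[0,\alpha)$, so I would take this as the definition of $\unknownMau$ on the first interval; note that by \eqref{eq:inidat_larger_than_alpha}, the constructed values satisfy $\unknownMau\geq\alpha$ there. I then proceed inductively: assuming $\unknownMau$ is already defined and satisfies $\unknownMau\geq\alpha$ on $[0,k\alpha)$ for some $k\geq 1$, I would extend it to $[k\alpha,(k+1)\alpha)$ by
\[
\unknownMau(t):=\inidat(t)+\alpha+\big|\{\,\incrt\in[\alpha,t]:\unknownMau(t-\incrt)>\incrt\,\}\big|,
\]
which is well-defined because for $\incrt\in[\alpha,t]$ one has $t-\incrt\in[0,k\alpha)$. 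The extended values automatically satisfy $\unknownMau(t)\geq\alpha$. I would then verify that the so-constructed function actually satisfies the original equation \eqref{eq:how_to_reconstruct_l_from_itself} via the converse direction of Lemma \ref{lem:transl}: for $\incrt\in[0,\alpha)$ the induction hypothesis and the base case yield $\unknownMau(t-\incrt)\geq\alpha>\incrt$, so the contribution of that interval to the measure in \eqref{eq:how_to_reconstruct_l_from_itself} is exactly $\alpha$, matching \eqref{eq:how_translated}. Uniqueness is immediate from the same recursion: two solutions both reduce to $\inidat(t)+t$ on $[0,\alpha)$ by Lemma \ref{lem:transl}, and then the recursive formula propagates equality from $[0,k\alpha)$ to $[0,(k+1)\alpha)$ for every $k$.

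\smallskip

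\emph{Quantitative estimates.} The lower bound \eqref{eq:l_larger_than_alpha} is then just Proposition \ref{prop:bound_of_l_from_below} applied to the constructed solution. The upper bound is transparent from the equation: since $\{\incrt\in[0,t]:\unknownMau(t-\incrt)>\incrt\}\subseteq[0,t]$, one has $\unknownMau(t)\leq\inidat(t)+t\leq\|\inidat\|_\infty+t$.

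\smallskip

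\emph{The one-sided Lipschitz estimate \eqref{eq:l_minus_id_is_decreasing}.} This is the main analytic step. Subtracting the defining equations for $\unknownMau(t+\tau)$ and $\unknownMau(t)$ and setting
\[
A:=\{\,\incrt\in[0,t+\tau]:\unknownMau(t+\tau-\incrt)>\incrt\,\},\qquad B:=\{\,\incrt\in[0,t]:\unknownMau(t-\incrt)>\incrt\,\},
\]
I would split $A=(A\cap[0,\tau])\cup(A\cap[\tau,t+\tau])$, bound the first piece by $\tau$, and translate the second piece via $\incrt=r+\tau$ to obtain
\[
|A\cap[\tau,t+\tau]|=\big|\{\,r\in[0,t]:\unknownMau(t-r)>r+\tau\,\}\big|\leq|B|,
\]
yielding $|A|-|B|\leq\tau$ and hence \eqref{eq:l_minus_id_is_decreasing}. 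This inequality is exactly the statement that $t\mapsto\unknownMau(t)-\inidat(t)-t$ is nonincreasing, so $\unknownMau=(\unknownMau-\inidat-t)+(\inidat+t)$ is a sum of a monotone function and a $\BV_{\rm loc}$ function, proving $\unknownMau\in\BV_{\rm loc}(\oi)$.

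\smallskip

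The main obstacle I anticipate is the verification (in the inductive step) that the recursively defined function indeed satisfies the original nonlocal equation rather than just the translated one of Lemma \ref{lem:transl}; one must carefully exploit the preserved lower bound $\unknownMau\geq\alpha$ so that the portion $\incrt\in[0,\alpha)$ of the measure contributes exactly $\alpha$. Everything else is either direct manipulation of the defining formula or a clean measure-theoretic splitting.
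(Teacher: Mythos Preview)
Your argument is correct and the construction/uniqueness part is essentially the same as the paper's: both build $\unknownMau$ step by step on intervals of length $\alpha$ via the translated equation of Lemma~\ref{lem:transl}, and both infer uniqueness from the forced value $\unknownMau(t)=\inidat(t)+t$ on $[0,\alpha)$.

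Where you diverge is in the analytic estimates. For the one-sided Lipschitz bound \eqref{eq:l_minus_id_is_decreasing}, the paper argues geometrically, comparing the intersections of the two half-lines $\halfline_{\bar t}$ and $\halfline_{\bar t+\tau}$ with ${\rm subgraph}_+(\unknownMau)$ and using that the higher half-line cuts a smaller portion. Your proof is a direct algebraic substitution: split $A$ at $\incrt=\tau$, bound the first piece by $\tau$, and shift the second piece by $\tau$ to land inside $B$. This is shorter and more transparent. As a bonus, you then obtain $\unknownMau\in\BV_{\rm loc}$ as an immediate corollary (since $\unknownMau-\inidat-\mathrm{id}$ is nonincreasing), whereas the paper proves $\BV_{\rm loc}$ independently by rewriting the measure term as $t-\vert\{\sigma\in[0,t]:\unknownMau(\sigma)+\sigma\le t\}\vert$, a difference of nondecreasing functions. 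The two routes are equivalent in content---both amount to the observation that $t\mapsto\unknownMau(t)-\inidat(t)-t$ is monotone---but yours packages it more economically by deriving regularity from the Lipschitz estimate rather than the other way round.
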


In particular, suppose ${\rm spt}(\inidat) \subset [0,a]$ and $t > a$;
then $\unknownMau$ cannot have increasing jumps, as well as increasing Cantor
parts, in $(a,+\infty)$.

\begin{proof}
We construct $\unknownMau$ as follows: first we set 
\begin{equation}\label{eq:first_step}
\unknownMau(t) := \inidat(t) + t \qquad \forall t \in [0,\alpha].
\end{equation}
Next, keeping in mind \eqref{eq:how_translated}, we define
\begin{equation}\label{eq:second_step}
\unknownMau(t) := \inidat(t) + \alpha + 
\vert\{\,\incrt \in [\alpha,t] : 
\unknownMau(t-\incrt) 
>\incrt\,\}\vert \qquad \forall t \in [\alpha,2\alpha].
\end{equation}
Note that $\unknownMau$ is well-defined in the interval $[\alpha,2\alpha]$,
since if $t \in [\alpha, 2\alpha]$ and $\incrt \in [\alpha,t]$, then 
$t-\incrt \in [0,\alpha]$, and we can use \eqref{eq:first_step}. 

We now repeat the argument inductively for $t \in [k\alpha, (k+1)\alpha]$ for
any integer $k \geq 2$; for instance, if $k=2$, for any $t \in [2\alpha,
3\alpha]$ we set
\begin{equation}\label{eq:third_step}
\begin{aligned}
\unknownMau(t) :=& 
\inidat(t) + \alpha + 
\vert\{\,\incrt \in [\alpha,t] : 
\unknownMau(t-\incrt) >\incrt \,\}\vert \\
=&
\inidat(t) + \alpha + 
\vert
\{\,\incrt \in [\alpha,2\alpha] : 
\unknownMau(t-\incrt) 
>
\incrt\} 
\vert
\\
& +
\vert
\{\,\incrt \in (2\alpha,t] : 
\unknownMau(t-\incrt)
>
\incrt 
\,\} 
\vert,
\end{aligned}
\end{equation}
where we notice that if $\incrt \in [\alpha,2\alpha]$ then $t-s \in
[0,2\alpha]$, so that
\[\begin{aligned}
\vert
\{\,\incrt \in [\alpha,2\alpha] : 
\unknownMau(t-\incrt)
>
\incrt \,
\} \vert
=&
\vert
\{\,\incrt \in [\alpha,2\alpha] : t-\incrt \in [0, \alpha], 
\unknownMau(t-\incrt)
>
\incrt\,\} \vert\\
& + \vert
\{\,\incrt \in [\alpha,2\alpha] : t-\incrt \in (\alpha, 2\alpha], 
\unknownMau(t-\incrt)>\incrt \,\} \vert,
\end{aligned}\]
and if $\incrt \in [2\alpha,t]$ then
$t-s \in [0,\alpha]$; thus \eqref{eq:third_step} is well-defined by 
\eqref{eq:first_step}
 and
\eqref{eq:second_step}\footnote{For any $t \in [k\alpha, (k+1)\alpha]$ we set 
\begin{equation}\label{eq:third_stepk}
\begin{aligned}
\unknownMau(t) :=& 
\inidat(t) + \alpha + 
\vert
\{\,\incrt \in [\alpha,t] : 
\unknownMau(t-\incrt) >\incrt \,\}\vert 
\\
=&
\inidat(t) + \alpha + 
\sum_{j=1}^{k-1} \vert 
\{\,\incrt \in [j\alpha,(j+1)\alpha] : 
\unknownMau(t-\incrt)\, >\incrt \}\vert
\\
& +\vert\{\,\incrt \in [k\alpha,t] : 
\unknownMau(t-\incrt)>\incrt\,\} \vert,
\end{aligned}
\end{equation}
where, if 
$\incrt \in [j\alpha,(j+1)\alpha]$ then
$t-s \in [(k-1-j)\alpha,(k+1-j)\alpha]$, so that 
\[
\begin{aligned}
\vert
\{\,\incrt \in [j\alpha,(j+1)\alpha] : 
\unknownMau(t-\incrt)>\incrt \,\} \vert
=&
\vert\{\,\incrt \in [j\alpha,(j+1)\alpha] : t-\incrt \in [(k-1-j)\alpha,
(k-j)\alpha], \unknownMau(t-\incrt)>\incrt\,\} \vert
\\
& + \vert\{\,\incrt \in [j\alpha,(j+1)\alpha] : t-\incrt \in ((k-j)\alpha,
(k-1+j)\alpha],\unknownMau(t-\incrt)>
\incrt \,\} \vert,
\end{aligned}\]
and if $\incrt \in [2k\alpha,t]$ then
$t-s \in [0,\alpha]$; thus \eqref{eq:third_step} is well-defined by recursion.
}.

In this way we construct a globally defined function $\uM:\oi \to
(0,+\infty)$ satisfying \eqref{eq:how_translated} and
\eqref{eq:l_larger_than_alpha}.  Hence the existence of a solution to
\eqref{eq:how_to_reconstruct_l_from_itself} follows from Lemma
\ref{lem:transl}.  To prove that $\unknownMau$ is unique, it is sufficient to
show that it is unique in $[0,\alpha)$; since $\unknownMau \geq \inidat \geq
\alpha$ in $[0,\alpha]$, it follows from \eqref{eq:second_step} that
$\unknownMau(t) = \inidat(t) + t$ for any $t \in [0,\alpha)$, as desired.

From \eqref{eq:first_step} and the assumption \eqref{eq:inidat_from_now_on}
it follows $\unknownMau \in \BV([0,\alpha])$; then the $\BV_{\rm
loc}$-regularity of $\unknownMau$ follows observing that the
term $\vert\{\,\incrt \in [0,t] : \unknownMau(t-\incrt) > \incrt\, \}\vert$
in \eqref{eq:how_to_reconstruct_l_from_itself} can be written as difference
of two nondecreasing functions. More specifically, using also \eqref{eq:obv},
\[\begin{aligned}
& \vert\{\,\incrt \in [0,t] : 
\unknownMau(t-\incrt) > \incrt\,\}\vert 
= t - \vert\{\,\incrt \in [0,t] : 
\unknownMau(t-\incrt) 
\leq
\incrt  
\,\}\vert
\\
=& 
t- \vert\{\,\tau \in [0,t] : 
\unknownMau(\tau) + \tau 
\leq
t  
\,\}\vert,
\end{aligned}\]
and the last term on the right-hand side is nondecreasing in $t$.  Clearly,
the bound
$\unknownMau(t) \leq 
\Vert \inidat\Vert_\infty 
 + t$
for all $t \in \oi$,
follows from
\eqref{eq:how_to_reconstruct_l_from_itself}.

It remains to prove that $\unknownMau$ satisfies the one-sided Lipschitz
condition \eqref{eq:l_minus_id_is_decreasing}.  Let $\overline t>0$ and
$\tau>0$; since
\[\{\,t \in [0,\overline t]: h_{\overline t +\tau}(t) \leq \unknownMau(t)\,\} 
\subseteq 
\{\,t \in [0,\overline t]: h_{\overline t}(t) < \unknownMau(t)\,\},\]
we have 
v\[\begin{aligned}
& \vert \{\,t \in [0,\overline t]: 
h_{\overline t +\tau}(t) \leq \unknownMau(t)\,\} 
\cup \{\,t \in (\overline t, \overline t+\tau] : 
h_{\overline t +\tau}(t) \leq \unknownMau(t)\,\}\vert\\ 
 = & 
\vert \{\,t \in [0,\overline t]: 
h_{\overline t +\tau}(t) \leq \unknownMau(t)
\,\} \vert + 
\vert \{\,t \in (\overline t, \overline t+\tau] : 
h_{\overline t +\tau}(t) \leq \unknownMau(t)\,\}\vert\\
\leq & 
\vert \{\,t \in [0,\overline t]: 
h_{\overline t }(t) < \unknownMau(t)
\,\} \vert + 
\vert \{\,t \in (\overline t, \overline t+\tau] : 
h_{\overline t +\tau}(t) \leq \unknownMau(t)\,\}\vert
\\
\leq & 
\vert \{\,t \in [0,\overline t]: 
h_{\overline t }(t) < \unknownMau(t)
\,\} \vert + 
\tau.  \end{aligned}\]
Whence
\eqref{eq:l_minus_id_is_decreasing} follows Remark
\ref{rem:geometric_meaning}, since 
the Lebesgue measures of the ortoghonal projection of 
$I_{\overline t+\tau}$ and
$I_{\overline t}$ 
on the $t$-axis, are the same as the Lebesgue measure of the
corresponding projections on the $s$-axis.
\end{proof}

\begin{Remark}[\textbf{Shorter steps}]\label{rem:piecewise_affine}\rm
Suppose \eqref{eq:inidat_larger_than_alpha}, and take $\beta \in (0,\alpha)$;
in particular $\uM(t) \geq \beta$ for any $t \in [0,\beta]$. Then the
constructive proof in Theorem \ref{teo:existence_of_l} (see 
\eqref{eq:second_step}, \eqref{eq:third_step}, \eqref{eq:third_stepk}) obtained replacing
$\alpha$ with $\beta$ leads to the same $\unknownMau$. 
\end{Remark}

The following useful result has a straightforward proof.

\begin{Lemma}[\textbf{Monotonicity}]\label{lem:monotonicity}
\rm
Suppose that $\inidat_1, \inidat_2$ are two nonnegative functions in
$\BV_{{\rm loc}}(\oi)$, both satisfying
\eqref{eq:inidat_larger_than_alpha}. Let $\unknownMau_1, \unknownMau_2$ be
the corresponding solutions given by Theorem \ref{teo:existence_of_l}.  Then
\[\inidat_1 \leq \inidat_2 \Longrightarrow 
\unknownMau_1 \leq \unknownMau_2.\]
\end{Lemma}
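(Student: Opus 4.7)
The plan is to exploit the constructive definition of $\unknownMau$ from the proof of Theorem~\ref{teo:existence_of_l} and proceed by induction on the intervals $[k\alpha,(k+1)\alpha]$.

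First, I would reduce to a common $\alpha$: if $\inidat_1 \leq \inidat_2$ and $\inidat_i$ satisfies \eqref{eq:inidat_larger_than_alpha} with parameter $\alpha_i$, then taking $\alpha := \min(\alpha_1,\alpha_2)$ one has $\inidat_2 \geq \inidat_1 \geq \alpha$ on $[0,\alpha)$, so both $\inidat_1,\inidat_2$ satisfy \eqref{eq:inidat_larger_than_alpha} with this common $\alpha$. By Remark~\ref{rem:piecewise_affine}, the solutions $\unknownMau_1,\unknownMau_2$ coincide with the ones that the constructive algorithm of Theorem~\ref{teo:existence_of_l} produces with step $\alpha$.

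Next, I would prove by induction on $k \in \N$ that $\unknownMau_1 \leq \unknownMau_2$ on $[0,(k+1)\alpha]$. The base case $k=0$ is immediate from \eqref{eq:first_step}, since $\unknownMau_i(t)=\inidat_i(t)+t$ on $[0,\alpha]$, and $\inidat_1 \leq \inidat_2$. For the inductive step, assume the inequality on $[0,k\alpha]$ and fix $t \in [k\alpha,(k+1)\alpha]$. Using \eqref{eq:third_stepk}, each $\unknownMau_i(t)$ equals
\[
\inidat_i(t) + \alpha + \vert\{\,\incrt \in [\alpha,t] : \unknownMau_i(t-\incrt)>\incrt\,\}\vert.
\]
For $\incrt \in [\alpha,t]$ we have $t-\incrt \in [0,t-\alpha] \subseteq [0,k\alpha]$, so the induction hypothesis gives $\unknownMau_1(t-\incrt) \leq \unknownMau_2(t-\incrt)$. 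Consequently
\[
\{\,\incrt \in [\alpha,t] : \unknownMau_1(t-\incrt)>\incrt\,\} \subseteq \{\,\incrt \in [\alpha,t] : \unknownMau_2(t-\incrt)>\incrt\,\},
\]
and taking Lebesgue measures, together with $\inidat_1(t) \leq \inidat_2(t)$, yields $\unknownMau_1(t) \leq \unknownMau_2(t)$. This closes the induction and, since $\bigcup_k [0,(k+1)\alpha] = \oi$, gives $\unknownMau_1 \leq \unknownMau_2$ on $\oi$.

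There is no genuine obstacle here: the only point requiring care is choosing the common $\alpha$ so that both constructive schemes are well defined and produce the respective unique solutions, after which the inductive set-inclusion argument is immediate by monotonicity of Lebesgue measure.
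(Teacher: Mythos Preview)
Your proof is correct and follows essentially the same approach as the paper: both arguments pick a common $\alpha$ valid for the two initial data, use the recursive construction from Theorem~\ref{teo:existence_of_l} (equivalently \eqref{eq:how_translated}), and propagate the inequality $\unknownMau_1 \leq \unknownMau_2$ interval by interval via the set inclusion $\{\incrt : \unknownMau_1(t-\incrt)>\incrt\} \subseteq \{\incrt : \unknownMau_2(t-\incrt)>\incrt\}$. Your version is a bit more explicit about the choice of $\alpha$ and the appeal to Remark~\ref{rem:piecewise_affine}, but the underlying argument is the same.
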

\begin{proof}
Let $\alpha>0$ be such that \eqref{eq:inidat_larger_than_alpha} holds both
for $\inidat_1$ and $\inidat_2$.
From our assumption and 
\eqref{eq:how_translated} it immediately follows that 
$\unknownMau_1 \leq \unknownMau_2$ in $[0,\alpha]$. Then 
the same inequality holds for $t \in [\alpha, 2\alpha]$, as 
a consequence of the inclusion 
$\{\,\incrt \in [\alpha,t] : \unknownMau_1(t-\incrt)
>\incrt \,\} 
\subseteq
\{\,\incrt \in [\alpha,t] : \unknownMau_2(t-\incrt)>
\incrt\,\} $
and \eqref{eq:how_translated}. Then the assertion follows,
recalling the recurrence proof of Theorem \ref{teo:existence_of_l}.
\end{proof}

We conclude this section with a crucial definition
which, as we shall see, is related to the vertical rearrangement.

\begin{Definition}[\textbf{Critical segments and critical times}]
\label{def:critical_segments_and_critical_times}
Any segment in the graph of $\uM$ having slope in $[-\infty, -1)$ is called a
critical segment, and the corresponding slope is called a critical slope. The
$t$-coordinate of the left extremum of a critical segment is called a
critical time\footnote{If the segment has slope $-\infty$, its projection on
the $t$-axis is one point, still called a critical time.}.
\end{Definition}

Note carefully that any critical segment in the graph of $\uM$ forces the
existence of a range of $t>0$ for which $\halfline_t \cap {\rm
subgraph}_+(\uM)$ is {\it not connected}.

\subsection{Transformation $\affinetransformation$ and rearrangement}
\label{subsec:transformation_R_and_rearrangement}
In this short section we deepen the geometric meaning of formula 
\eqref{eq:geometric_meaning}; this shades light on the 
meaning of $\uM$, and is useful to construct examples (see
Sections \ref{sec:examples} and \ref{sec:the_Riemann_problem}). 

Let us introduce
the affine transformation
\begin{equation}\label{eq:R}
\affinetransformation\colon\firstquadrant \to \firstquadrant, 
\qquad \affinetransformation
(t,\incrt) := (t+\incrt,\incrt) .
\end{equation}
It moves a point to the right (same $\incrt$, larger $t$) of an amount equal
to its vertical $\incrt$-component\footnote{In particular, it transforms a
square $((k-1)\alpha, k\alpha) \times (0,\alpha)$ into the square $(k\alpha,
(k+1)\alpha) \times (0,\alpha)$, $k \in \mathbb N$.}.  Then there is a
peculiar relation between ${\rm subgraph}_+(\uM)$ (which is a locally finite
perimeter set \cite{GiMoSo:98}) and its $\affinetransformation$-transformed: the former is a
kind of rearrangement of the latter.

Specifically, consider the vertical rearrangement $F^\sharp$ of a locally
finite perimeter set $F \subset \oi \times \oi$, defined as follows: for any
$\overline t \in \oi$, if $F_{\overline t} := \{t=\overline t\} \cap F$, and
$F_{\overline t}^\sharp := \{\overline t\} \times [0, \vert F_{\overline
t}\vert]$, then
\[F^\sharp := \bigcup_{t \geq 0} F_t^\sharp,\]
which is still \cite[Theorem 14.4]{Mag} a locally finite perimeter subset of
$\oi \times \oi$, having the same Lebesgue measure as $F$. Suppose for
svimplicity that $\inidat$ has compact support, say ${\rm spt}(\inidat)
\subset [0,\support]$.  Then the validity of \eqref{eq:geometric_meaning} for
all $t \in [\support, +\infty)$ can be equivalently expressed as
\begin{equation}\label{eq:trasformato_riarrangiato}
{\rm subgraph}_+(\uM) 
\cap\Big( [\support, +\infty) \times \oi\Big) 
= \affinetransformation({\rm subgraph}_+(\uM))^\sharp
\cap\Big( [\support, +\infty) \times \oi\Big).
\end{equation}
Then it is immediately seen from \eqref{eq:trasformato_riarrangiato} that,
provided $k \alpha \geq \support$, we have
\begin{equation}\label{eq:oscillation}
\begin{aligned}
\sup_{t \in [(k+1)\alpha, (k+2)\alpha]} \uM(t)
\leq &
\sup_{t \in 
[k\alpha, (k+1)\alpha]} 
\uM(t) 
\\
\inf_{t \in [(k+1)\alpha, (k+2)\alpha]} \uM(t)
\geq &
\inf_{t \in [k\alpha, (k+1)\alpha]} \uM(t) 
\end{aligned}
\end{equation}
so that the oscillation of $\uM$ in $[k\alpha, (k+1)\alpha]$ 
is nonincreasing in $k$.

\nada{
\begin{figure}
\[\<<2>>\buildrel \affinetransformation\over \longrightarrow\,\<<3>>
\buildrel (\cdot)^\sharp \over \longrightarrow\,\<<4>>\]
\caption{Left: the initial (generalized) graph of $\uM$ and
its subgraph.
Center: the image through the map $\affinetransformation$
of the subgraph; it results in a set enclosed between
the horizontal axis and 
the graph of a multivalued function.
Right: result of the vertical rearrangement.
}\label{fig:affine_rearrangement}
\end{figure}
}

It is interesting to observe that the strict inequalities hold in
\eqref{eq:oscillation} under certain conditions on $\inidat$: for instance,
suppose that $\inidat$, beside having support contained in $[0,\support]$, is
polygonal. If there is a critical segment in the generalized graph of $\uM$
whose projection on the horizontal $t$-axis is contained in $[k\alpha,
(k+1)\alpha]$, then the strict inequalities hold in \eqref{eq:oscillation}.
Finally, segments with slope $-1$ are transformed via $\afftransf$ to
vertical segments, and a critical segment, together with its left-adjacent,
are transformed into a polygonal which is not a graph with respect to $t$.
This construction will be empolyed in the examples, in particular 
the Riemann problem considered in Section
\ref{sec:the_Riemann_problem}.

\subsection{The functions $\unknownplustMau_\uM$ and $\inverseMau_\uM$}
It is convenient to introduce the following functions, that have been
implicitely used in the end of the proof of Theorem \ref{teo:existence_of_l}.

\begin{Definition}[\textbf{$L_\uM$ and $S_\uM$}]\label{def:the_function_S}
Let $\inidat$ and $\uM = \uM[\inidat]$ be as in Definition \ref{def:the_function_l}.
We set
\[\unknownplustMau_\uM(t) := 
\unknownMau(t) + t \qquad \forall t \geq 0\]
and
\begin{equation}\label{eq:inverseMau}
\inverseMau_\uM(\incrt) := 
\begin{cases}
0 & {\rm if~} \incrt \in [0, \uM(0)],
\\
|\{\,\sigma \in [0,\incrt] : 
\unknownplustMau_\uM(\sigma) 
\leq
 \incrt\,\}| 
& {\rm if~} \incrt > \uM(0).
\end{cases}
\end{equation}
\end{Definition}

\begin{Remark}\label{rem:the_function_S}\rm
The function $\inverseMau_\uM$ is nondecreasing in $\oi$, and for any $t>0$
\begin{equation}
\label{eq:how_to_reconstruct_l}
t-\inverseMau_\uM(t) = 
\vert\{\,s \in [0,t] : \unknownMau(s) 
> t-s\,\}\vert= 
\vert\{\,\incrt \in [0,t] : \unknownMau(t-\incrt)
> \incrt\,\}\vert.
\end{equation}
Hence formula \eqref{eq:how_to_reconstruct_l_from_itself}
is equivalent to 
\begin{equation}\label{eq:b_B_u_0}
\unknownMau(t) =  \inidat(t)  + 
t-  \inverseMau_\uM(t) 
\qquad \forall t \in \oi.
\end{equation}
\end{Remark}
Before concluding this section, 
we summarize some of the obtained results.

\begin{Remark}\label{rem:summa}\rm 
The function $\uM$ can be described in various equivalent ways:
\begin{itemize}
\item[(i)] algebraically, using equation 
\eqref{eq:how_to_reconstruct_l_from_itself} or, equivalently, 
equation \eqref{eq:how_translated} or also equation \eqref{eq:b_B_u_0}; 
\item[(ii)] geometrically, computing the Lebesgue measure of the
projection on one of the coordinate axes
of the intersection of the up-left half-lines at $45^o$ 
with ${\rm subgraph}_+(\uM)$ (see formula \eqref{eq:geometric_meaning}), or using the transformation $\affinetransformation$
and the vertical rearrangement,
as explained in 
Section
\ref{subsec:transformation_R_and_rearrangement}.
\end{itemize}
\end{Remark}

\subsection{Examples}
It is worth showing some explicit computations of $\uM$;
more involved and interesting examples will be illustrated in Sections
\ref{sec:examples} and \ref{sec:the_Riemann_problem}.
The next example is particularly simple, and
concerns an initial condition not with
compact support; curiously enough, this example involves the triangular
numbers
\[T_k := \sum_{i=0}^k k = \frac{k(k+1)}{2}.\]

\begin{Example}[\textbf{Constant initial condition, I}]\label{exa:costante_uno}\rm
Let
\[u_0=1_{[0,+\infty)},\]
which has infinite mass. 
{}From \eqref{eq:first_step} and
\eqref{eq:second_step} one checks
that $\uM\colon\oi \to [1,+\infty)$ is
the Lipschitz piecewise affine function that interpolates the points
$(T_k, k+1)$, so that
\begin{equation}
\uM(t)=\frac{t}{k+1} + \frac{k}{2} +1, \qquad t\in[T_k,T_{k+1}], \ k \in
\mathbb N,
\label{eq:exa_l_1}
\end{equation} 
see Fig.~\ref{fig:characteristic_half_line}. Hence $L_\uM\colon\oi \to
[1,+\infty)$ is also Lipschitz piecewise affine, $L_\uM(T_k) = T_{k+1}$, and
reads as
\begin{equation}
\label{eq:exa_L_1}
L_\uM(t)=\frac{k+2}{k+1}(t-T_k)+T_{k+1},
\quad t\in[T_k,T_{k+1}], \quad k \in \mathbb N.
\end{equation}
Functions $\uM$ and $L_\uM$ are strictly increasing and surjective.

Equivalently,
$\uM$ can be constructed
 recursively using $S_\uM$ as follows.
Since $\alpha=1$,
from \eqref{eq:first_step} we obtain $\uM(t) = 1+t$ (and $L_\uM(t)= 1+2t$)
for $t \in [0,1]$; from \eqref{eq:inverseMau} we have $S_\uM=0$ in $[0,1]$,
and $S_\uM(s)= (s-1)/2$ for any
$s \in [1,3] = L_\uM([0,1])$.
Since we know the function $S_\uM$ in
the interval $[1,3]$ we can find, using \eqref{eq:b_B_u_0},
$L_\uM$ in $[1,3]$. We have $\uM(t)=(t+3)/2$ and 
$L_\uM(t)=3(t+1)/2$   
 for $t\in[1,3]$ and so on. 
In general, $S_\uM\colon\oi \to \oi$, 
\[
S_\uM(s)=\frac{k+1}{k+2}(s-T_{k+1})+T_k,\quad
s\in[T_{k+1},T_{k+2}],\quad k\ge0,
\]
and is the inverse of $L_\uM$ and interpolates the values
$ S_\uM(T_k) = T_{k-1}$.

\begin{figure}
\hbox to\hsize{\hfil\includegraphics{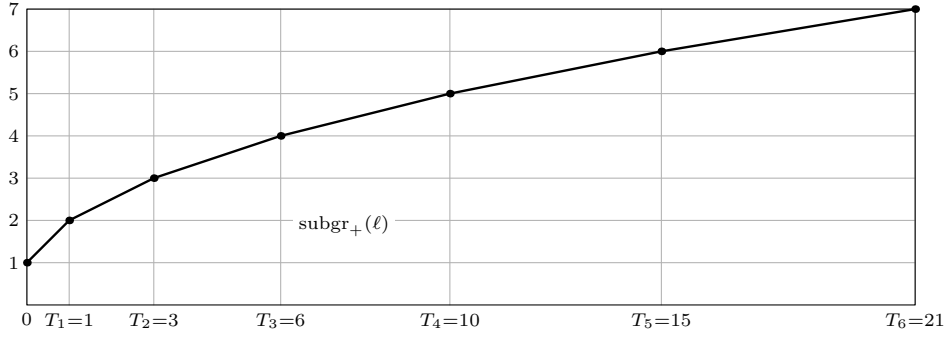}\hfil}
\caption{{\small Graph of
$\uM$ for
Example~\ref{exa:costante_uno}. Time $t$ is horizontal.
}
}
\label{fig:characteristic_half_line}
\end{figure}
\end{Example}

The next interesting example will be completely solved in Section \ref{sec:the_Riemann_problem},
and it is useful in order to understand how to construct $\uM$. 

\begin{Example}[\textbf{Riemann problem in $[0,11+3/2]$}]
\label{exa:the_Riemann_pbm}\rm
Let
\begin{equation}\label{eq:inidat_Riemann_I}
\inidat=1_{[0,1)}.
\end{equation}
Let us 
calculate $\uM
= \uM[\inidat]$,
which we know to be {\it polygonal},
since $\inidat$ is polygonal
(Remark \ref{rem:geometric_meaning}). We shall construct $\uM$ 
 in the 
time interval $[0,11+3/2]$.  Taking $\alpha=1$,
Lemma~\ref{lem:transl} tells us that
\begin{equation}\label{eq:elle_fino_a_uno}
\ell(t) = \inidat(t)+t = 1+t, \qquad t\in(0,1).
\end{equation}
It is convenient to add the vertical segment $\{0\}
\times [0,1] = \{0\}\times [0, \inidat(0)]$
in the generalized graph of $\uM$.
In $[0,1) \times \oi$ the set  ${\rm subgraph}_+(\uM)$
is enclosed between the segment $\{(t, 1+t): t \in [0,1)]\}$
and  the horizontal axis and, by
Proposition
\ref{prop:bound_of_l_from_below},
\begin{equation}\label{eq:striscia_orizzontale}
{\rm subgraph}_+(\uM) \supseteq (0, +\infty) \times (0,1).
\end{equation}
Hence, for $t \in [1,2]$,
the intersection
$I_t = \halfline_t\cap {\rm subgraph}_+(\uM)$
of the half-line $\halfline_t$ with
${\rm subgraph}_+(\uM)$
 is the
segment joining $(t,0)$
with
$(t/2-1/2,t/2+1/2)$, and so
(Remark \ref{rem:geometric_meaning})
\begin{equation}\label{eq:elle_fino_a_due}
\ell(t)=\frac{t}{2}+\frac{1}{2},
\qquad \forall t\in [1,2].
\end{equation}
Therefore the left limit of 
$\uM$ at $1$ is $2 > \uM(1)=1$, and $\uM$ has a decreasing
 jump at $t=1$; it is convenient
to draw the generalized graph of $\uM$ in $[0,2]$, in particular the vertical
segment $\{1\} \times [1,2]$, see
Fig. \ref{fig:riemann-ell}\footnote{
We have furthermore
$\inverseMau_\uM=0$ on $[0,1]$, $\inverseMau_\uM(x) = (x-1)/2$
for $x \in [1,2)$, $\inverseMau_\uM(x) = (x-1)/2+2x/3-4/3=
5x/6 - 11/6$
 for $x \in [2, 7/2]$.}.

Now, using \eqref{eq:elle_fino_a_uno}, \eqref{eq:elle_fino_a_due} and
\eqref{eq:striscia_orizzontale}, we see that
for $t\in[2,3]$ the intersection $I_t
= \halfline_t \cap {\rm subgraph}_+(\uM)$ is {\it not connected}, and splits into
two segments:
one joining $(t,0)$
with
$(t/2-1/2,t/2+1/2)$, and another one
joining
$(1,t-1)$ with
$((2/3)t-1/3,t/3+1/3)$.
Hence
$$
\ell(t)=\frac{t}{2}+\frac{1}{2} + \frac{t}{3} + \frac{1}{3}
-(t-1)
=
\frac{-t+11}{6}
 \qquad \forall t\in[2,3].
$$
For times $t\in [3, 11)$ the intersection $I_t$ turns out
to be connected,
and $\uM$ can be found by iteratively moving on the
right (recall the affine transformation in
Section \ref{subsec:transformation_R_and_rearrangement})  the
point $(2,3/2)$ by $3/2$ (its height), and the point
$(3,4/3)$ by $4/3$ (its height) till the transformed points
have the same first coordinate, i.e., when
$2+(3/2)n=3+(4/3)n$ for $n\in \mathbb N$, giving
$n=6$ and so $t=2 + (3/2)6 =11$.
The graph of $\uM$ in $[3, 11)$ is plotted in Fig.
\ref{fig:riemann-ell}.

Time $t=11$ is similar to $t=1$, when $\uM$ has a decreasing jump
discontinuity, and time $37/3 = 11 + 4/3$ is critical.
For $t\in (11,37/3)$ we find, by a direct computation
similar to the one made in $[2,3]$,
$\ell(t)=(1+t)/9$, while
for $t\in (37/3, 25/2=11+3/2)$ the intersection $I_t$ consists
of two segments: a direct computation gives
$\ell(t)=(1+t)/9 -(t-11)+(1+t)/10=-(71/90)t+1009/90$.
Now since the solution of $37/3+(40/27) n= 25/2+(27/20)n$ is $n=1 =
\lfloor90/71\rfloor$, we have
$t=1009/71$.

The complete recursive construction of $\uM$ is complicated,
and will be done in Section \ref{sec:the_Riemann_problem}.
\end{Example}

\subsection{Initial conditions vanishing at the origin}
\label{subsec:initial_condition_vanishing_at_the_origin}

If we drop assumption \eqref{eq:inidat_larger_than_alpha} we have 
the following result.

\begin{Proposition}
\label{teo:if_we_drop}
Let $\inidat$ be bounded and satisfying 
 \eqref{eq:inidat_from_now_on}.
Then there exists a function $\uM$ 
satisfying \eqref{eq:l_minus_id_is_decreasing} and
\begin{equation}\label{eq:geq}
\uM(t) = \inidat(t) + \vert s \in [0,t] : \uM(t-s) 
\geq s \vert \qquad \forall t \in \oi.
\end{equation}
\end{Proposition}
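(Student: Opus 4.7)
The plan is to regularize $\inidat$ from above, apply Theorem~\ref{teo:existence_of_l} to the regularized data, and pass to the decreasing limit; the passage from strict inequality in \eqref{eq:how_to_reconstruct_l_from_itself} to the non-strict inequality in \eqref{eq:geq} then comes together with a harmless pointwise redefinition on an at-most countable set of times. Concretely, set
\[
\inidat_n(t):=\inidat(t)+\tfrac{1}{n}\,1^{}_{[0,1/n)}(t),\qquad t\in\oi.
\]
Each $\inidat_n$ is nonnegative, right-continuous, in $\BV_{{\rm loc}}(\oi)$, bounded, and satisfies $\inidat_n\ge 1/n$ on $[0,1/n)$, so that \eqref{eq:inidat_larger_than_alpha} holds with $\alpha=1/n$. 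Theorem~\ref{teo:existence_of_l} then produces $\unknownMau_n\in\BV_{{\rm loc}}(\oi)$ with
\[
\unknownMau_n(t)=\inidat_n(t)+|\{s\in[0,t]:\unknownMau_n(t-s)>s\}|,\qquad \unknownMau_n(t)\le\|\inidat\|_\infty+\tfrac{1}{n}+t,
\]
and satisfying \eqref{eq:l_minus_id_is_decreasing} with $\inidat_n$ in place of $\inidat$. Since $n\mapsto\inidat_n$ is pointwise nonincreasing, Lemma~\ref{lem:monotonicity} gives $\unknownMau_{n+1}\le\unknownMau_n$, so I set
\[
\widetilde\unknownMau(t):=\lim_{n\to\infty}\unknownMau_n(t)=\inf_{n}\unknownMau_n(t)\in[0,\|\inidat\|_\infty+t].
\]
Letting $n\to\infty$ in the one-sided Lipschitz bound (for $n$ large enough that $\inidat_n$ coincides with $\inidat$ at both arguments) shows that $\widetilde\unknownMau-\inidat-\mathrm{id}$ is nonincreasing; hence $\widetilde\unknownMau\in\BV_{{\rm loc}}(\oi)$.

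Next, I pass to the limit in the equation for $\unknownMau_n$. Fix $t>0$; for $n>1/t$ we have $\inidat_n(t)=\inidat(t)$, and the sets $E_n(t):=\{s\in[0,t]:\unknownMau_n(t-s)>s\}$ form a nonincreasing sequence of measurable subsets of $[0,t]$ (nonincreasing because so is $(\unknownMau_n)$). Monotone convergence of measures for decreasing sequences gives $|E_n(t)|\searrow|E_\infty(t)|$, where $E_\infty(t):=\bigcap_n E_n(t)$, so
\[
\widetilde\unknownMau(t)=\inidat(t)+|E_\infty(t)|.
\]
The chain of inclusions
\[
\{s:\widetilde\unknownMau(t-s)>s\}\;\subseteq\;E_\infty(t)\;\subseteq\;\{s:\widetilde\unknownMau(t-s)\ge s\}
\]
is immediate: for the right-most one, $s\in E_\infty(t)$ means $\unknownMau_n(t-s)>s$ for every $n$, hence $\widetilde\unknownMau(t-s)\ge s$ in the limit. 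Writing $L(u):=\widetilde\unknownMau(u)+u$ and changing variable $u=t-s$, the symmetric difference between the two outer sets is contained in the level set $\{u\in[0,t]:L(u)=t\}$.

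To turn $\widetilde\unknownMau$ into a solution of \eqref{eq:geq}, I define
\[
\unknownMau(t):=\inidat(t)+|\{s\in[0,t]:\widetilde\unknownMau(t-s)\ge s\}|,\qquad t\in\oi.
\]
Since $\widetilde\unknownMau\in\BV_{{\rm loc}}$, so is $L$, and on any bounded interval $L$ has at most countably many plateaus (distinct plateaus occur at distinct levels). Consequently the exceptional set $C:=\{t\in\oi:|\{u\in[0,t]:L(u)=t\}|>0\}$ is at most countable. For $t\notin C$ the symmetric difference above has measure zero, whence $|E_\infty(t)|=|\{s:\widetilde\unknownMau(t-s)\ge s\}|$ and therefore $\unknownMau(t)=\widetilde\unknownMau(t)$. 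Thus $\unknownMau$ and $\widetilde\unknownMau$ agree off the countable set $C$; for every $t\in\oi$ the maps $s\mapsto\unknownMau(t-s)$ and $s\mapsto\widetilde\unknownMau(t-s)$ then coincide outside the countable set $t-C$, so that $|\{s:\unknownMau(t-s)\ge s\}|=|\{s:\widetilde\unknownMau(t-s)\ge s\}|$ for every $t$. Plugging this back into the definition of $\unknownMau$ yields exactly \eqref{eq:geq}. The one-sided Lipschitz property \eqref{eq:l_minus_id_is_decreasing} for $\unknownMau$ follows by rewriting $\unknownMau(t)=\inidat(t)+t-|\{u\in[0,t]:L(u)<t\}|$ and noting that $r\mapsto|\{u\in[0,r]:L(u)<r\}|$ is nondecreasing in $r$ (both the range $[0,r]$ and the threshold weaken as $r$ grows).

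The main obstacle is precisely the identification $|E_\infty(t)|=|\{s:\widetilde\unknownMau(t-s)\ge s\}|$ away from an at-most countable set of exceptional times, together with the verification that the pointwise redefinition from $\widetilde\unknownMau$ to $\unknownMau$ repairs \eqref{eq:geq} for every $t$ without disturbing \eqref{eq:l_minus_id_is_decreasing}.
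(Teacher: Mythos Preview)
Your proof is correct, and it follows the same high-level strategy as the paper---approximate $\inidat$ from above, invoke Theorem~\ref{teo:existence_of_l} and Lemma~\ref{lem:monotonicity}, and pass to the monotone limit---but your choice of regularization forces you into an unnecessary detour. The paper simply takes $\inidat+\delta_n$ with constants $\delta_n\downarrow 0$; the crucial payoff is that, comparing the defining equations for $\unknownMau_{\delta_m}$ and $\unknownMau_{\delta_n}$ with $m>n$, one gets $\unknownMau_{\delta_m}(t)\le \unknownMau_{\delta_n}(t)-(\delta_n-\delta_m)$, hence in the limit $\unknownMau(t)\le \unknownMau_{\delta_n}(t)-\delta_n$, so $\unknownMau_{\delta_n}>\unknownMau$ \emph{strictly}. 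From this the identity $\bigcap_n\{s:\unknownMau_{\delta_n}(t-s)>s\}=\{s:\unknownMau(t-s)\ge s\}$ holds on the nose for every $t$, and \eqref{eq:geq} drops out immediately with no redefinition needed.

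Your localized bump $\tfrac{1}{n}1_{[0,1/n)}$ does not yield a uniform strict gap between $\unknownMau_n$ and $\widetilde\unknownMau$, which is why you are left with the weaker sandwich $\{>\}\subseteq E_\infty\subseteq\{\ge\}$ and must then argue that the level sets $\{L=t\}$ have positive measure for only countably many $t$, redefine $\widetilde\unknownMau$ on that countable set, and re-verify both \eqref{eq:geq} and \eqref{eq:l_minus_id_is_decreasing} for the modified function. All of that is fine (the disjointness-of-level-sets argument for countability is standard, and the redefinition argument is clean), but it is entirely avoidable: replacing your bump by a constant shift collapses the whole second half of your proof into one line.
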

\begin{proof}
For any $\delta>0$ let $\unknownMau_\delta$ be the unique solution of 
\eqref{eq:how_to_reconstruct_l_from_itself}
 given by
Theorem \ref{teo:existence_of_l},
with $\inidat + \delta$ in place of $\inidat$.
Recall from Lemma \ref{lem:monotonicity} 
that, if $0 < \delta_1 \leq \delta_2$, then 
$\unknownMau_{\delta_1} \leq
\unknownMau_{\delta_2}$. Fix a decreasing sequence $(\delta_n) \subset
(0,+\infty)$ converging
to zero; by monotonicity and since each $\uM_{\delta_n}$ is positive,
$$
\exists \lim_{n \to +\infty}
\uM_{\delta_n}(t) =: \uM(t) \in [0,+\infty)
\qquad \forall t \in \oi.
$$
We know from Theorem \ref{teo:existence_of_l} that 
\begin{equation}\label{eq:gen}
\uM_{\delta_n}(t) 
= \inidat(t) + \delta_n + 
\vert 
\{
\incrt \in [0,t] : 
\uM_{\delta_n}(t-\incrt) 
> 
\incrt 
\}
\vert
\qquad \forall t \in [0,+\infty), \forall n \in \mathbb N,
\end{equation}
so it is sufficient to prove that 
\begin{equation}\label{eq:dominated}
\lim_{n \to +\infty} 
\vert E_n^t\vert = 
\vert E^t\vert
\qquad \forall t \in \oi,
\end{equation}
where 
$$
E_n^t := \{
\incrt \in [0,t] : 
\uM_{\delta_n}(t-\incrt) 
>
\incrt 
\} \supseteq
E^t := \{
\incrt \in [0,t] : 
\uM(t-\incrt) 
\geq
\incrt 
\}.
$$
But
$E^t = \cap_{n \in \NN} E_n^t$,  and so 
\eqref{eq:geq} follows passing to the limit in \eqref{eq:gen}
as $n\to +\infty$.
Also, \eqref{eq:l_minus_id_is_decreasing} follows passing to the
limit, as $n \to +\infty$, in equation \eqref{eq:l_minus_id_is_decreasing}
written with $\unknownMau_{\delta_n}$ in place of $\unknownMau$
and $\inidat + \delta_n$ in place of $\inidat$.
\end{proof}

The next result in particular ensures that if $\inidat$ is zero on 
a right  interval  of $t=0$ and then becomes strictly positive, 
and if $\unknownMau$ exists, then 
also $\unknownMau$ must vanish in that interval; this allows
to use a restarting procedure for 
computing $\unknownMau$, via Theorem \ref{teo:existence_of_l}.

\begin{Proposition} 
Let $\kappa>0$, $a > 0$, and let $\gamma$ be the positive 
root of $\gamma^2 + \kappa \gamma - \kappa = 0$. 
If
\begin{equation}\label{eq:kt}
\inidat(t) \leq \kappa t \qquad \forall t \in [0,a],
\end{equation}
then a solution to \eqref{eq:how_to_reconstruct_l_from_itself}
satisfies
\begin{equation}\label{eq:kt_bis}
\unknownMau(t) \leq \kappa t + \gamma t \qquad \forall t \in [0,a].
\end{equation}
Conversely, 
if 
\begin{equation}\label{eq:kt2}
\inidat(t) \geq \kappa
t \qquad \forall t \in [0,a], 
\end{equation}
then
\begin{equation}\label{eq:kt2_bis}
\unknownMau(t) \geq \kappa t + \gamma t \qquad \forall t \in [0,a].
\end{equation}
\end{Proposition}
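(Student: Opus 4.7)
The strategy is to introduce the scale-invariant quantity $\unknownMau(t)/t$ and bound it uniformly on $(0,a]$ via a self-referential quadratic inequality. The driving algebraic fact is that $c := \kappa + \gamma$ is exactly the positive root of $x^2 - \kappa x - \kappa = 0$, as checked at once:
\begin{equation*}
(\kappa+\gamma)^2 - \kappa(\kappa+\gamma) - \kappa = \gamma^2 + \kappa\gamma - \kappa = 0.
\end{equation*}

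For the upper bound \eqref{eq:kt_bis}, I would set $g(T) := \sup_{t \in (0,T]} \unknownMau(t)/t$. A preliminary application of the trivial estimate $|\{\,s \in [0,t] : \unknownMau(t-s) > s\,\}| \leq t$ in \eqref{eq:how_to_reconstruct_l_from_itself}, combined with \eqref{eq:kt}, yields $\unknownMau(t) \leq (\kappa+1)t$ on $[0,a]$, so $g$ is well defined and bounded by $\kappa+1$ on $(0,a]$. Now fix $T \in (0,a]$ and $t \in (0,T]$. By definition of $g(T)$, one has $\unknownMau(t-s) \leq g(T)(t-s)$ for every $s \in (0,t)$, so the condition $\unknownMau(t-s) > s$ forces $s < g(T)\,t/(g(T)+1)$; consequently
\begin{equation*}
\vert\{\,s \in [0,t] : \unknownMau(t-s) > s\,\}\vert \leq \frac{g(T)\,t}{g(T)+1}.
\end{equation*}
Substituting into \eqref{eq:how_to_reconstruct_l_from_itself}, using \eqref{eq:kt}, dividing by $t$, and taking the supremum over $t \in (0,T]$ produces the self-referential inequality $g(T) \leq \kappa + g(T)/(g(T)+1)$, which rearranges to the quadratic $g(T)^2 - \kappa g(T) - \kappa \leq 0$, forcing $g(T) \leq c$. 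Together with $\unknownMau(0) = \inidat(0) = 0$, this gives \eqref{eq:kt_bis}.

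The lower bound \eqref{eq:kt2_bis} is proved symmetrically with $h(T) := \inf_{t \in (0,T]} \unknownMau(t)/t$. Since \eqref{eq:how_to_reconstruct_l_from_itself} forces $\unknownMau(t) \geq \inidat(t)$, hypothesis \eqref{eq:kt2} yields $h(T) \geq \kappa > 0$. Reversing the previous inclusion, for $t \in (0,T]$ and $s < t$ the sufficient condition $h(T)(t-s) > s$ guarantees $\unknownMau(t-s) \geq h(T)(t-s) > s$, hence
\begin{equation*}
\vert\{\,s \in [0,t] : \unknownMau(t-s) > s\,\}\vert \geq \frac{h(T)\,t}{h(T)+1}.
\end{equation*}
Dividing by $t$ and taking the infimum yields $h(T) \geq \kappa + h(T)/(h(T)+1)$, i.e.\ $h(T)^2 - \kappa h(T) - \kappa \geq 0$; since $h(T) > 0$, this forces $h(T) \geq c$, and \eqref{eq:kt2_bis} follows.

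The main obstacle is not the algebra but the choice of observable: $\unknownMau$ is merely $\BV_{\rm loc}$, with possible decreasing jumps, so a direct contradiction argument on $\unknownMau(t) - ct$ has to grapple carefully with jump/continuity issues at a putative first crossing point (splitting the integration interval near such a point and keeping track of the right-continuity of $\unknownMau$). Working instead with the ratio $\unknownMau(t)/t$ and passing to a supremum (resp.\ infimum) over the whole interval $(0,T]$ sidesteps all such local issues: the self-referential bound closes in a single step, with the critical value $c = \kappa+\gamma$ dictated precisely by the quadratic $x^2 - \kappa x - \kappa = 0$.
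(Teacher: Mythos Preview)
Your argument is correct, and it is genuinely different from the paper's. The paper proceeds by monotone iteration: from the crude bound $\unknownMau(t)\le(\kappa+1)t$ it feeds $\unknownMau(t)\le(\kappa+\gamma_n)t$ back into \eqref{eq:how_to_reconstruct_l_from_itself} to obtain $\unknownMau(t)\le(\kappa+\gamma_{n+1})t$ with $\gamma_{n+1}=\frac{\kappa+\gamma_n}{1+\kappa+\gamma_n}$, and then lets the decreasing sequence $(\gamma_n)$ converge to the fixed point $\gamma$; the lower bound is handled symmetrically with an increasing sequence $(\alpha_n)\uparrow\gamma$. You instead pass directly to the fixed point by working with the scale-invariant observable $\unknownMau(t)/t$: taking its supremum (resp.\ infimum) on $(0,T]$ collapses the whole iteration into a single self-referential inequality $g\le\kappa+\frac{g}{g+1}$ (resp.\ $h\ge\kappa+\frac{h}{h+1}$), which is exactly the quadratic $x^2-\kappa x-\kappa\lessgtr 0$ whose positive root is $\kappa+\gamma$. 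Your route is shorter and sidesteps the limit; the paper's route is more explicit about \emph{how} the optimal bound is approached and makes the underlying contraction transparent. Both rely on the same set inclusions for $\{s:\unknownMau(t-s)>s\}$, so the difference is purely in how the recursion is closed.
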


\begin{proof} Suppose 
\eqref{eq:kt}. {}From \eqref{eq:how_to_reconstruct_l_from_itself}
 it follows $\unknownMau(t) \leq \kappa
t + t$ for 
any $t \in [0,a]$. Hence 
$$
\{\incrt \in [0,t]: \unknownMau(t-\incrt) 
> \incrt 
\} 
\subseteq
\{\incrt \in [0,t]: 
\incrt < \kappa(t-\incrt) + t-\incrt \} \qquad \forall t \in [0,a].
$$
The inequality $\incrt < \kappa(t-\incrt) + t-\incrt$ implies $\incrt < 
\frac{\kappa+1}{\kappa+2} t =: \gamma_1 t$,
so from \eqref{eq:how_to_reconstruct_l_from_itself} we have the improved estimate
$$
\unknownMau(t) \leq \kappa
 t + \gamma_1 t \qquad \forall t \in [0,a].
$$
We now iterate the argument, and obtain that 
$$
\unknownMau(t) \leq \kappa
 t + \gamma_n t \qquad \forall t \in [0,a],~ \forall n \in \mathbb N^*,
$$
with
$$
\gamma_n := 
\frac{\kappa+\gamma_{n-1}}{1+\kappa + \gamma_{n-1}}, \qquad n \geq 2.
$$
The decreasing 
sequence $(\gamma_n)$ converges to 
the positive solution $\gamma= \frac{-\kappa
 + \sqrt{\kappa^2 + 4\kappa}}{2}$ of the
equation $\gamma = \frac{\kappa+\gamma}{1+\kappa+\gamma}$ or equivalently
$\gamma^2 + \kappa \gamma - \kappa = 0$, and \eqref{eq:kt_bis} follows.

Conversely, suppose \eqref{eq:kt2}. Then, from
\eqref{eq:how_to_reconstruct_l_from_itself} it follows $\unknownMau(t) \geq \kappa t$ for 
any $t \in [0,a]$. Hence 
$$
\{\incrt \in [0,t]: \unknownMau(t-\incrt) 
> \incrt \} 
\supseteq
\{\incrt \in [0,t]: 
\incrt < \kappa(t-\incrt) \}.
$$
The inequality $\incrt  < \kappa(t-\incrt)$ implies $\incrt <
\frac{\kappa}{\kappa+1} t =: \alpha_1 t$,
so that from \eqref{eq:how_to_reconstruct_l_from_itself}
we have the improved estimate
$$
\unknownMau(t) > \kappa t + \alpha_1 t \qquad \forall t \in [0,a].
$$
We now iterate the argument, and obtain that
$$
\unknownMau(t) > \kappa t + \alpha_n 
t \qquad \forall t \in [0,a], \forall n \in \mathbb N^*,
$$
with
$$
\alpha_n := 
\frac{\kappa+\alpha_{n-1}}{1+\kappa + \alpha_{n-1}}, \qquad n \geq 2.
$$
The increasing sequence $(\alpha_n)$ 
converges to $\gamma$, and \eqref{eq:kt2_bis} follows.
\end{proof}


\section{Construction of a solution}\label{sec:construction_of_an_integral_solution}
In this section we prove existence and uniqueness of an
integral solution (Definition \ref{def:integral_solution}).
The next result shows that the function $\unknownMau$ 
studied in Section \ref{sec:another_formulation}
captures
the relevant information to solve
\eqref{eq:pde_intro}.

\begin{Theorem}[\textbf{Existence and uniqueness
of an integral solution}]\label{teo:reconstructing_u_hat_from_l}
Let $\inidat$ satisfy 
\eqref{eq:inidat_from_now_on}
and \eqref{eq:inidat_larger_than_alpha}. 
Let $\uM = \uM[\inidat]:\oi\to \oi$ be the solution to 
\eqref{eq:how_to_reconstruct_l_from_itself}.
Then the 
function $\pressol: \soct \to \oi$ defined as 
\begin{equation}\label{eq:recovering_of_u_hat} 
\pressol(\tau,\xi) 
:= 
\inidat(\xi) + 
\vert \{ s \in [0,\tau] : \uM(s) + s
> \xi\}\vert
\qquad
\forall
(\tau,\xi) \in \soct,
\end{equation} 
is the unique integral solution of 
\eqref{eq:pde_intro}, and 
\begin{equation}\label{eq:l_equal_l_u_hat} 
\pressol(\tau, \tau) = 
\unknownMau(\tau) \qquad \forall \tau \in \oi. \end{equation} 
Moreover:
\begin{itemize} 
\item[(i)] for any $\xi \in \oi$ the function $\pressol(\cdot, \xi)$ is 
one-Lipschitz;
\item[(ii)] $v(\tau,\cdot) \in \BV_{{\rm loc}}([\tau,+\infty))$ for any $\tau >0$;
\item[(iii)] if $\inidat_i$, $i=1,2$, satisfies
\eqref{eq:inidat_from_now_on} and \eqref{eq:inidat_larger_than_alpha}, 
and 
if $\pressol_i$ stands for the expression on the right-hand side of 
\eqref{eq:recovering_of_u_hat} with $\inidat$ replaced by 
$\inidat_i$, 
then 
\begin{equation}\label{eq:comparison}
 \inidat_1 \leq \inidat_2 \Rightarrow \pressol_1 \leq \pressol_2; 
\end{equation} 
\item[(iv)] 
if there 
exists $C>0$ such that for all $\xi \in \oi$ 
\begin{equation}
\label{eq:inidat_Lip_da_una_parte} 
\frac{\inidat(\xi+h)-\inidat(\xi)}{h} \leq C, 
\qquad h >0,
\end{equation} 
then for all $(\tau, \xi),  (\tau, \xi + h) \in \soct$,
\begin{equation}\label{eq:exists} 
\frac{v(\tau,\xi+h)-v(\tau,\xi)}{h} 
\leq C, \qquad h >0;
\end{equation} 
\item[(v)] if 
$\inidat \in L^1((0,+\infty))$ is bounded
then $\pressol(\tau,\cdot) \in L^1((\tau,+\infty))$
and $\pressol$ satisfies the conservation 
of mass\footnote{Note that
$\pressol(\cdot,\xi)$ is increasing;  this is not in contradiction
with the conservation of mass \eqref{eq:conservation_of_mass_tauxi}, 
which is required  in the time-decreasing interval $(\tau,+\infty)$.}
\begin{equation}\label{eq:conservation_of_mass_tauxi}
\int_\tau^{+\infty} \pressol(\tau, \xi) d\xi = \int_0^{+\infty}
\inidat(\xi)~d\xi \qquad
\forall \tau \in \oi.
\end{equation}
\end{itemize} 
\end{Theorem}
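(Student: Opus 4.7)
The plan is to define $v$ by the explicit formula \eqref{eq:recovering_of_u_hat} and then to verify, in this order: (A) the identity $v(\tau,\tau)=\ell(\tau)$ and the fact that $v$ is an integral solution, (B) uniqueness of the integral solution, and (C) the five properties (i)–(v). For (A), fix $\tau\in\oi$ and note that, as $\xi\downarrow\tau$, the sets $\{s\in[0,\tau]:\ell(s)+s>\xi\}$ increase to $\{s\in[0,\tau]:\ell(s)+s>\tau\}$; by monotone convergence of Lebesgue measure and right-continuity of $u_0$, one obtains $\lim_{\xi\to\tau^+}v(\tau,\xi)=u_0(\tau)+|\{s\in[0,\tau]:\ell(s)+s>\tau\}|$. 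By \eqref{eq:obv} combined with \eqref{eq:how_to_reconstruct_l_from_itself}, this limit equals $\ell(\tau)$, giving both condition (i) of Definition \ref{def:integral_solution} and \eqref{eq:l_equal_l_u_hat}. Substituting $v(s,s)=\ell(s)$ back into the right-hand side of \eqref{eq:integral_is_equivalent} returns the definition \eqref{eq:recovering_of_u_hat}, so the integral equation is satisfied.

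For (B), given any integral solution $\hat u$, the trace relation \eqref{eq:mistery} shows that $t\mapsto \hat u(t,t)$ solves \eqref{eq:how_to_reconstruct_l_from_itself}. The uniqueness assertion of Theorem \ref{teo:existence_of_l} depends only on the equation itself: Proposition \ref{prop:bound_of_l_from_below} forces any solution to satisfy $\ell\geq\alpha$ on $\oi$, which in turn forces $\hat u(t,t)=u_0(t)+t$ on $[0,\alpha)$, and Lemma \ref{lem:transl} then determines the trace recursively on each $[k\alpha,(k+1)\alpha]$. Consequently $\hat u(t,t)=\ell(t)$ for every $t\in\oi$, and plugging this into \eqref{eq:integral_is_equivalent} gives $\hat u\equiv v$.

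Properties (i)–(iv) are direct consequences of the formula. For (i), $v(\tau_2,\xi)-v(\tau_1,\xi)=|\{s\in(\tau_1,\tau_2]:\ell(s)+s>\xi\}|\in[0,\tau_2-\tau_1]$, so $v(\cdot,\xi)$ is nondecreasing and $1$-Lipschitz. For (ii), write $v(\tau,\cdot)=u_0+\psi_\tau$ with $\psi_\tau(\xi):=|\{s\in[0,\tau]:\ell(s)+s>\xi\}|$; since $\psi_\tau$ is nonincreasing with values in $[0,\tau]$, it belongs to $\BV_{\rm loc}$, and so does $v(\tau,\cdot)$. For (iii), Lemma \ref{lem:monotonicity} yields $\ell_1\leq\ell_2$, hence $\{\ell_1(s)+s>\xi\}\subseteq\{\ell_2(s)+s>\xi\}$, hence $v_1\leq v_2$. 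For (iv), the same monotonicity shows $\psi_\tau$ is nonincreasing in $\xi$, so $v(\tau,\xi+h)-v(\tau,\xi)\leq u_0(\xi+h)-u_0(\xi)\leq Ch$.

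The main computational step is (v). By Tonelli,
\[
\int_\tau^{+\infty}\vert\{s\in[0,\tau]:\ell(s)+s>\xi\}\vert\,d\xi=\int_0^\tau(\ell(s)+s-\tau)^+\,ds.
\]
Integrating \eqref{eq:how_to_reconstruct_l_from_itself} over $[0,\tau]$ in $t$ and applying Tonelli once more to the measure term yields
\[
\int_0^\tau\ell(t)\,dt=\int_0^\tau u_0(t)\,dt+\int_0^\tau\min(\tau-t,\ell(t))\,dt,
\]
which, since $\ell(t)-\min(\tau-t,\ell(t))=(\ell(t)+t-\tau)^+$, rearranges to the key identity
\[
\int_0^\tau u_0(t)\,dt=\int_0^\tau(\ell(t)+t-\tau)^+\,dt.
\]
Combining the two displays,
\[
\int_\tau^{+\infty}v(\tau,\xi)\,d\xi=\int_\tau^{+\infty}u_0(\xi)\,d\xi+\int_0^\tau u_0(t)\,dt=\int_0^{+\infty}u_0(\xi)\,d\xi,
\]
which is \eqref{eq:conservation_of_mass_tauxi}. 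Finiteness of each integral uses $u_0\in L^1$ together with the bound $\ell(t)\leq\|u_0\|_\infty+t$ from Theorem \ref{teo:existence_of_l}, which makes the defining set in $\psi_\tau$ bounded and hence $\psi_\tau\in L^1$. The main obstacle is precisely this Tonelli chain, since it is the only place where the self-referential definition of $\ell$ is genuinely exploited to convert a diagonal-trace integral into a boundary-mass integral; everything else reduces to monotonicity of measures and the already-proved uniqueness theorem for $\ell$.
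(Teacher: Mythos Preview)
Your argument is correct throughout. Parts (A), (B) and items (i)--(iv) coincide with the paper's proof: the trace identity via right-continuity and \eqref{eq:obv}, uniqueness via Theorem~\ref{teo:existence_of_l}, and the elementary monotonicity/inclusion arguments are exactly what the paper does.

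For item (v), however, you take a genuinely different route. The paper proves mass conservation by showing that $\tau\mapsto\int_\tau^{+\infty}v(\tau,\xi)\,d\xi$ is locally absolutely continuous and then computing its derivative pointwise (at continuity points of $\ell$), obtaining
\[
\frac{d}{d\tau}\int_\tau^{+\infty}v(\tau,\xi)\,d\xi=-v(\tau,\tau)+\int_\tau^{\tau+\ell(\tau)}1\,d\xi=0.
\]
Your proof is instead a static Tonelli computation: you integrate the defining equation \eqref{eq:how_to_reconstruct_l_from_itself} for $\ell$ over $[0,\tau]$, swap the order of integration to obtain $\int_0^\tau u_0=\int_0^\tau(\ell(t)+t-\tau)^+\,dt$, and recognise the right-hand side (again by Tonelli) as $\int_\tau^{+\infty}\psi_\tau(\xi)\,d\xi$. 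This is cleaner in that it avoids justifying absolute continuity and differentiating under the integral; it also makes transparent \emph{where} the self-referential structure of $\ell$ is used, namely in converting the missing mass $\int_0^\tau u_0$ into the contribution of $\psi_\tau$. The paper's differentiation argument, on the other hand, is closer in spirit to the formal computation in the introduction and to the Lyapunov calculation in Section~\ref{sec:a_Lyapunov_functional}, so it fits more naturally with the rest of the exposition. Both approaches are valid.
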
 
A comment on the  expression of $\pressol$ 
in \eqref{eq:recovering_of_u_hat} is in order. Fix $\overline \xi >0$; 
as usual in linear transport equations, 
we look at the characteristic line $\{\xi = \overline \xi\}$, and 
for a solution  
we should take the value $\inidat(\overline \xi)$ for $(\tau,\xi)$  in the tail region,
and $\inidat(\overline \xi) + \tau$ for $(\tau,\xi)$ in the active region; if the (generalized)
graph $G_{L_\uM}$ of $L_\uM = {\rm id} + \uM$ is a graph with respect to the vertical
$\xi$-axis,
$\pressol(\tau,\xi) = \inidat(\xi)$ in the tail region, while the 
second addendum on the right-hand side of \eqref{eq:recovering_of_u_hat}
activates when $(\tau,\xi)$ is in the active region. 
However, in general $G_{L_\uM}$ needs not be
a graph with respect to the $\xi$-axis (see for instance Fig. \ref{fig:riemann-ell}); 
in case that a characteristic line intersects $G_{L_\uM}$ 
(once or) more then once, we have to add to $\inidat(\overline \xi)$,
both in the tail region and in the active region, 
the Lebesgue measure of the intersection 
of $\{\xi = \overline \xi\}$ with the subgraph of 
$\unknownplustMau_\uM$, i.e.,
the time spent by the characteristic line in the active region.

\begin{proof}
Nonnegativity of $v$ is immediate, 
since $\inidat$ is nonnegative.

\noindent (i) $v$ is one-Lipschitz in $\tau$, 
since clearly, for any $\xi \geq 0$ and any $h \in \R$ with $\xi+
h \geq 0$,
$$
\begin{aligned}
\vert
v(\tau + h, \xi) - v(\tau,\xi)
\vert =&
\Big\vert \vert 
\{ s \in [0,\tau+h] : \uM(s) +s > \xi\} \vert
\\
& -
\vert \{ s \in [0,\tau] : \uM(s) +s> \xi\} 
\vert 
\Big\vert \leq \vert h\vert.
\end{aligned}
$$
\noindent (ii) $v(\tau, \cdot) \in \BV_{{\rm loc}}([\tau, +\infty))$, since $\inidat
\in \BV_{{\rm loc}}(\oi)$ and the second addendum on the right-hand
side of \eqref{eq:recovering_of_u_hat} is nonincreasing (and right-continuous) if
considered as a function of $\xi$. 

Let $\tau >0$; passing to the limit as $\xi \to\tau^+$ in  \eqref{eq:recovering_of_u_hat} and using the right-continuity
of the right-hand side, gives
$$
\lim_{\xi\to \tau^+} \pressol(\tau, \xi)= \pressol(\tau, \tau)
=
\\
\inidat(\tau)  + \vert \{ s \in [0,\tau] : 
\uM(s)+s > \tau\}\vert = \uM(\tau),
$$
where the last equality follows from
\eqref{eq:how_to_reconstruct_l_from_itself}, 
and \eqref{eq:l_equal_l_u_hat} follows.

{}From \eqref{eq:recovering_of_u_hat} and \eqref{eq:l_equal_l_u_hat} it follows
$$
\pressol(\tau,\xi) =
\inidat(\xi) + \vert \{s \in [0,\tau] : 
v(s,s)+s > \xi\}\vert,
$$
i.e., we have that
\eqref{eq:integral_is_equivalent} holds, which is equivalent to
\eqref{eq:equazione_integrale_0}.

Concerning uniqueness, suppose that 
$w$ is another integral solution of \eqref{eq:pde_intro}; from \eqref{eq:integral_is_equivalent}
it follows
$$
w(\tau, \tau)
=
\inidat(\tau)  + \vert \{ s \in [0,\tau] : 
w(s,s)+s > \tau\}\vert \qquad \forall \tau \in [0,+\infty).
$$
Thus $w(\tau, \tau)$ satisfies the same equation as $\uM(\tau)$;
from the uniqueness property stated in Theorem \ref{teo:existence_of_l}, 
we get $w(\tau,\tau)=  \uM(\tau)$ for any $\tau \in \oi$. Thus
$$
v(\tau, \tau) = w(\tau, \tau) \qquad \forall \tau \in \oi,
$$
and so $v=w$ from \eqref{eq:recovering_of_u_hat} and
\eqref{eq:l_equal_l_u_hat}.

\noindent (iii) If $\inidat_1 \leq \inidat_2$, 
then \eqref{eq:comparison} follows straightforwardly
 from \eqref{eq:recovering_of_u_hat}
and \eqref{eq:how_to_reconstruct_l_from_itself}.

\noindent (iv) It immediately follows from \eqref{eq:inidat_Lip_da_una_parte},
\eqref{eq:recovering_of_u_hat}, 
and the inclusion $\{
s \in [0,\tau] : s + \unknownMau(s) >\xi + h\} 
\subseteq \{s \in [0,\tau] : s + \unknownMau(s) > \xi\}$.

\noindent (v) 
Given $\tau >0$, $\pressol(\tau, \cdot) \in L^1((\tau, +\infty))$,
since $\uM$ is bounded on $[0,T]$ for any $T>0$, and so the integral
$\int_\tau^{+\infty} \vert \{ s \in [0,\tau]: 
\uM(s) + s > \xi \}\vert~d\xi$ actually reduces to an integral
on a bounded interval. 
Using the boundedness of $\inidat$ and 
that $\pressol(\cdot,\xi)$ is Lipschitz, it follows
that 
$\int_{\tau}^{+\infty} \pressol(\tau,\xi)~d\xi$ is locally 
absolutely continuous
in $\tau$, hence differentiable on a set $I \subseteq (\tau,+\infty)$ of
full measure. Since $\uM\in \BV_{{\rm loc}}(\oi)$, almost every point of $I$ is a continuity point of 
$\uM$. Fix such a point $\tau$:  writing the limits
of the incremental quotients around $\tau$ we get
\begin{equation*}
\begin{aligned}
\frac{d}{d\tau} \int_{\tau}^{+\infty} \pressol(\tau,\xi)~d\xi =& 
- \pressol(\tau, \tau) + 
\int_{\tau}^{+\infty} 
\pressol_\tau(\tau,\xi)
~d\xi 
\\
=&
- \pressol(\tau, \tau) + 
\int_{\tau}^{+\infty} 
1_{(\tau, \pressol(\tau,\tau) + \tau)}(\xi) 
~d\xi 
\\
=&
- \pressol(\tau, \tau) + 
\tau + v(\tau,\tau) -  \tau =0.
\end{aligned} 
\end{equation*}
\end{proof}

The next observation can be used to quickly deduce $v$
from the knowledge of $\uM$, and it is used to find
the solution of the Riemann problem,
see the pictures in Figs. \ref{fig:Riemann_movie_I}, \ref{fig:Riemann_movie_II}
and also Remark
\ref{rem:from_l_to_v}. 

\begin{Remark}[\textbf{Finding $v$ from $\uM$}]\rm\label{rem:alternative_expression}
If $\pressol$ is as in \eqref{eq:recovering_of_u_hat},
it is immediate to check that 
\begin{equation}\label{eq:part_imp}
\begin{aligned}
\pressol(\tau,\xi)=&
\unknownMau(\xi) - 
|\{s\in[\tau,\xi] : 
\unknownMau(s) + s> \xi \}| 
\\
= & 
\unknownMau(\xi) - (\xi-\tau) + 
|\{s\in[\tau,\xi] : 
\unknownMau(s) \leq \xi-s \}| 
\end{aligned}
\qquad  \forall(\tau,\xi)\in \soct.
\end{equation}
Indeed, from 
\eqref{eq:how_to_reconstruct_l_from_itself}
and \eqref{eq:obv}, 
$$
\begin{aligned}
& \unknownMau(\xi)  = \inidat(\xi) + 
|\{s\in[0,\xi] : 
\unknownMau(s) + s> \xi \}| 
\\
= &
\inidat(\xi) + 
|\{s\in[0,\xi] : 
\unknownMau(s) + s>  \xi \}| 
\\
& 
+(\xi-\tau) 
- 
|\{s\in[\tau,\xi] : 
\unknownMau(s) + s\leq \xi \}| 
- 
|\{s\in[\tau,\xi] : 
\unknownMau(s) + s> \xi \}| 
\\
= &
\inidat(\xi) + 
|\{s\in[0,\tau] : 
\unknownMau(s) + s>  \xi \}| 
+(\xi-\tau) 
- 
|\{s\in[\tau,\xi] : 
\unknownMau(s) + s\leq \xi \}|  
\\
= &
v(\tau,\xi) +
|\{s\in[\tau,\xi] : 
\unknownMau(s) + s> \xi \}|,
\end{aligned}
$$
where the last equality follows from \eqref{eq:recovering_of_u_hat}.
\end{Remark}

Without passing to coordinates $(\tau,\xi)$ a possible
definition of solution (less transparent than Definition 
\ref{def:integral_solution})
is the following. 

\begin{Definition}[\textbf{Distributional solution}]
\label{def:distributional_solution}
We say that a nonnegative function
$v \in L^1_{{\rm loc}}( \firstquadrantopen)$
defined everywhere is a distributional solution to \eqref{eq:limit_problem_x} if
 for any $t> 0$ there exists $\lim_{x \to 0^+}
u(t,x) =: u(t,0) \in \oi$, and 
for any $\varphi \in \mathcal C^1_c(\oi \times (0,+\infty))$ we have
\begin{equation}\label{eq:distributional_solution}
\begin{aligned}
& \int_0^{+\infty}
\int_0^{+\infty}
-u (\varphi_t - \varphi_x)
~dxdt
-
\int_0^{+\infty}
\int_0^{+\infty}
1_{(0,u(t,0))}(x)\varphi(t,x)
~dxdt
\\
& + \int_0^{+\infty} \inidat(x)
\varphi(0,x)~dx=0.
\end{aligned}
\end{equation}
\end{Definition}

\begin{Proposition}[\textbf{Integral solutions and distributional solutions}]
Let $\wu = \wu(\tau,\xi)$ be an integral solution of
\eqref{eq:limit_problem_x} as in Theorem 
\ref{teo:reconstructing_u_hat_from_l}. Then $u := \wu(\Phi)
=u(t,x)$ is a distributional solution of \eqref{eq:limit_problem_x}.
Conversely,
let $u = u(t,x)\in \BV_{{\rm loc}}(\firstquadrantopen)$ be a locally bounded
distributional solution of
\eqref{eq:limit_problem_x}.  
Then $\wu:= u(\Phi^{-1})$ satisfies (i) of Definition \ref{def:integral_solution}
and (ii) for any $\tau > 0$ and for almost every $\xi$ with $(\tau, \xi) \in \soctopen$.
\end{Proposition}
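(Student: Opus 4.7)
The plan is to verify both implications after pulling test functions through $\Phi$ to the $(\tau,\xi)$ coordinates. Setting $\tilde\varphi(\tau,\xi) := \varphi(\tau, \xi-\tau)$, the chain rule gives $\varphi_t - \varphi_x = \tilde\varphi_\tau$, and the identifications $u(t,x) = \wu(\tau,\xi)$ and $1_{(0, u(t,0))}(x) = 1_{(\tau, \wu(\tau,\tau)+\tau)}(\xi)$ convert \eqref{eq:distributional_solution} into the equivalent identity
\[
-\iint_{\soctopen} \wu\, \tilde\varphi_\tau\, d\tau\, d\xi - \iint_{\soctopen} 1_{(\tau, \wu(\tau,\tau)+\tau)}(\xi)\, \tilde\varphi\, d\tau\, d\xi + \int_0^{+\infty} \inidat(\xi)\, \tilde\varphi(0,\xi)\, d\xi = 0,
\]
where $\tilde\varphi$ ranges over smooth compactly supported functions on $\soct$ that vanish in a neighborhood of $\{\xi = \tau\}$ (automatic from $\varphi$ being supported away from $x=0$).

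For the direct implication, I substitute the integral formula \eqref{eq:equazione_integrale_0} for $\wu$ into the first double integral above. The piece arising from $\inidat(\xi)$, after integrating $\tilde\varphi_\tau(\cdot,\xi)$ in $\tau\in[0,\xi]$ and using $\tilde\varphi(\xi,\xi) = 0$, yields $-\int \inidat(\xi)\tilde\varphi(0,\xi)\,d\xi$, which cancels the boundary contribution. The remaining piece $\iint\bigl(\int_0^\tau 1_{(s, \wu(s,s)+s)}(\xi)\,ds\bigr)\tilde\varphi_\tau\, d\tau\, d\xi$ is handled by Fubini over $\{0 \leq s \leq \tau < \xi\}$: integrating $\tilde\varphi_\tau$ in $\tau\in[s,\xi]$ produces $-\iint 1_{(s, \wu(s,s)+s)}(\xi)\tilde\varphi(s,\xi)\,ds\,d\xi$, which cancels the forcing term after renaming $s \to \tau$.

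For the converse, since $\Phi$ is bi-Lipschitz, $u \in \BV_{{\rm loc}}(\firstquadrantopen)$ gives $\wu \in \BV_{{\rm loc}}(\soctopen)$. The transformed identity expresses that, as Radon measures on $\soctopen$, $\wu_\tau$ equals the bounded function $(\tau,\xi) \mapsto 1_{(\tau, \wu(\tau,\tau)+\tau)}(\xi)$ times Lebesgue measure, with initial trace $\wu(0,\xi) = \inidat(\xi)$. By the $\BV$ slicing theorem \cite{AmFuPa:00} applied in the variable $\xi$, for a.e.\ $\xi > 0$ the slice $\tau \mapsto \wu(\tau,\xi)$ belongs to $\BV_{{\rm loc}}([0,\xi))$ and satisfies the corresponding one-dimensional identity whose right-hand side is a bounded measurable function; hence the slice is absolutely continuous in $\tau$, and integration from $0$ to $\tau$ yields \eqref{eq:equazione_integrale_0} for a.e.\ $\xi$ and every $\tau \in [0,\xi)$, which is condition (ii). Condition (i) is inherited directly from the existence of $\lim_{x \to 0^+} u(t,x)$ built into Definition \ref{def:distributional_solution}. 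I expect the main obstacle to be this slicing step: one must test the global distributional identity against tensor-product functions $\chi(\tau)\psi(\xi)$ with $\psi$ approximating a Dirac mass at a generic $\xi > \tau$, and justify that the procedure correctly identifies the one-dimensional derivative $\wu_\tau(\cdot,\xi)$ while matching the pointwise-defined forcing $1_{(\tau,\wu(\tau,\tau)+\tau)}(\xi)$, whose dependence on the trace $\wu(s,s)$ must be controlled using condition (i).
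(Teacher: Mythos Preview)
Your proof is correct and follows essentially the same route as the paper: both directions pass through the change of variables to $(\tau,\xi)$, the forward direction substitutes the integral representation of $\wu$ and uses Fubini together with $\int_s^\xi \tilde\varphi_\tau\,d\tau = -\tilde\varphi(s,\xi)$ exactly as the paper's split into terms $\mathrm{I}$ and $\mathrm{II}$, while the converse identifies $D_\tau\wu$ with the bounded forcing and then invokes the $\BV$ slicing result from \cite{AmFuPa:00} (Proposition~4.35 there) to get the one--dimensional equation for a.e.\ $\xi$. The only cosmetic difference is that the paper writes $\wu$ via the measure $|\{s\in[0,\tau]:\unknownMau(s)+s>\xi\}|$ before integrating by parts, whereas you substitute the equivalent integral form \eqref{eq:equazione_integrale_0} directly; this changes nothing of substance.
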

\begin{proof}
Let $\varphi = \varphi(t,x) \in \mathcal C^1_c(\oi \times (0,+\infty))$, and
set $\widehat \varphi :=\varphi(\Phi^{-1})$.

Suppose that $\wu$ is an integral solution.
{}From \eqref{eq:recovering_of_u_hat} 
we have
$$
u(t,x) 
= 
\inidat(t+x) + 
\vert \{ s \in [0,t] : \uM(s) + s > x+t\}\vert,
\qquad (t,x) \in \firstquadrantopen.
$$
%
Hence

\[
\begin{aligned}
& -\int_{\firstquadrantopen} u(\varphi_t-\varphi_x)~dtdx=
-\int_{\firstquadrantopen} \inidat(t+x)(\varphi_t(t,x)-\varphi_x(t,x))\, dtdx
\\
& -\int_{\firstquadrantopen}\vert \{s \in [0,t] : \unknownMau(s) +s > x+t \} \vert
\bigl(\varphi_t(t,x)-\varphi_x(t,x)\bigr)\, dtdx\\
=& -\int_0^{+\infty} \inidat(\xi)
\biggl(\int_0^\xi\widehat\varphi_\tau(\tau,\xi)\, d\tau\biggr)\, d\xi\\
& -\int_0^{+\infty}
\biggl(\int_0^\xi \vert 
\{s \in [0,\tau] : \unknownMau(s) + s > \xi \} 
\vert
\widehat\varphi_\tau(\tau,\xi)\, d\tau\biggr)\, d\xi
=:\mathrm{I}+\mathrm{II}.
\end{aligned}\]
It is immediate that
\[
\mathrm{I} = 
\int_0^{+\infty} 
\widehat \varphi(0,\xi) \inidat(\xi)\,d\xi = 
\int_0^{+\infty} \varphi(0,x) \inidat(x)\,dx.
\]
Moreover, using \eqref{eq:so_that}, 
\[
\vert \{s \in [0,\tau] : \unknownMau(s) + s > \xi \} \vert
=\int_0^\tau 1_{(s,\unknownMau(s)+s)}(\xi)\, ds\qquad \forall
\xi\ge\tau,
\]
and hence
\[
\begin{aligned}
\mathrm{II}&=-\int_0^{+\infty}
\biggl(\int_0^\xi\Bigl(\int_0^\tau 1_{(s,\unknownMau(s) + s)}(\xi)\, ds
\Bigr)
\widehat\varphi_\tau(\tau,\xi)\, d\tau\biggr)\, d\xi\\
&=\int_0^{+\infty}
\biggl(\int_0^\xi 1_{(\tau,\unknownMau(\tau) + \tau)}(\xi)
\widehat\varphi(\tau,\xi)\, d\tau\biggr)\, d\xi=
\int_{\firstquadrantopen}1_{(0,u(t,0))}(x)
\varphi(t,x)\, dtdx,
\end{aligned}
\]
where the second equality follows integrating by parts,
and the last equality follows from 
\eqref{eq:l_equal_l_u_hat}. 
This proves that $u$ is a distributional solution of 
\eqref{eq:limit_problem_x}.

\smallskip

Conversely, suppose that 
$u \in \BV_{{\rm loc}}(\firstquadrantopen) \cap L^\infty_{{\rm loc}}(\firstquadrantopen)$ is 
a distributional solution of \eqref{eq:limit_problem_x}.  
{}From \eqref{eq:distributional_solution} 
it follows
\[
\begin{aligned}
& -\int_{0}^{+\infty}
\int_0^\xi\wu(\tau,\xi)
\widehat\varphi_\tau(\tau,\xi)\, d\tau d\xi-
\int_0^{+\infty}
\int_0^\xi 
1_{(\tau,\wu(\tau,\tau)+\tau)}(\xi)\widehat \varphi(\tau,\xi) ~d\tau d\xi
\\
& +\int_0^{+\infty} \inidat(\xi) \widehat \varphi(0,\xi)~d\xi=0.
\end{aligned}
\]

Taking $\varphi$ with compact support in 
$\soctopen$, it follows that the distributional
partial derivative $D_\tau \wu$, which is a measure with
locally finite total variation in $\soctopen$, satisfies 
\begin{equation}\label{eq:measures}
D_\tau \wu(\tau,\xi)
=  1_{(\tau,\wu(\tau,\tau) + \tau)}(\xi)
\end{equation}
in the sense of measures
in 
$\soctopen$.
Since the right-hand side of
\eqref{eq:measures} is a locally integrable function (taking 
values in $\{0,1\}$), 
$D_\tau \wu$ coincides with its absolutely
continuous part, the density of which we denote by 
$\nabla_\tau \wu$. 
For any $\xi \in \oi$ consider the slice $\widehat u^{(\xi)} : [0,\xi]\to \R$
of $\wu$, i.e., the restriction of $u$ in $\firstquadrant$ to the horizontal line
passing through $\xi$. 
By \cite[Proposition 4.35]{AmFuPa:00}
for almost every $\xi 
\in \oi$ 
we have 
$\wu^{(\xi)} \in \BV_{{\rm loc}}([0,\xi])$, its 
distributional derivative 
$\dot {\wu}^{(\xi)}$ 
is absolutely continuous,
and $\dot {\wu}^{(\xi)}(\tau) = \nabla_\tau \wu(\tau,\xi)$ for almost
every $\tau \in [0, \xi]$. We deduce
$$
\dot {\wu}^{(\xi)}(\tau) 
=  1_{(\tau,\wu(\tau,\tau)+ \tau)}(\xi)
\qquad {\rm for~a.e.~} \tau \in [0,\xi].
$$
Integrating, \eqref{eq:equazione_integrale_0} follows for almost any $\xi \geq 
0$ and for almost every $\tau \in [0,\xi]$, and hence for any $\tau
\in [0,\xi]$.
\end{proof}

\section{A Lyapunov functional}
\label{sec:a_Lyapunov_functional}
The  functional 
$\Lyap : D(\Lyap) \subset L^2(\R) \to [0,+\infty)$, 
corresponding to the continuous version of 
\eqref{eq:discrete_Lyapunov_functional},
is

$$
\Lyap(u) := \int_0^{+\infty} \int_x^{x + u(x)} y ~dy ~dx,
$$
where the domain $D(\Lyap)$ of $\Lyap$ consists
of all {\it nonnegative}
$u \in L^2((0,+\infty))$ such that $xu \in L^1((0,+\infty))$, and
the height $y$ has the meaning of a potential energy.
It can be evaluated as
$$
\Lyap(u)
 = \frac{1}{2} \int_0^{+\infty} \left( u^2(x) + 2xu(x)\right)~dx
     = \frac{1}{2} \|u\|^2_{L^2} + (x, u)_{L^2}
$$
and is a strictly convex functional.

\begin{Proposition}[\textbf{Stationary solutions}]
        Fix $\mass > 0$. Then the solution of the variational problem
        $$
\min\Big\{ \Lyap(u): u \in D(\Lyap), 
      \int^{+\infty}_0 u(x)~dx= \mass
\Big\},
$$
is
        $$u(x) = \max\left\{\sqrt{2\mass}-x,0\right\} \qquad \forall x \in \oi.$$
\end{Proposition}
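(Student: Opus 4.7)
The plan is to prove this by a direct comparison argument that exploits the strict convexity of $\Lyap$ together with the nonnegativity constraint. First I would set $u^*(x):=\max\{\sqrt{2\mass}-x,0\}$ and check admissibility: since $u^*$ has compact support $[0,\sqrt{2\mass}]$ and is bounded, clearly $u^*\in D(\Lyap)$, and a direct computation yields $\int_0^{+\infty}u^*(x)\,dx=\int_0^{\sqrt{2\mass}}(\sqrt{2\mass}-x)\,dx=\mass$.

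Next, for an arbitrary admissible $u\in D(\Lyap)$ with $\int_0^{+\infty}u\,dx=\mass$, I would write $u=u^*+w$. By construction $\int_0^{+\infty}w\,dx=0$ and, crucially, $w=u\geq 0$ on the set $\{x\geq\sqrt{2\mass}\}$ where $u^*$ vanishes. Expanding the quadratic-plus-linear form of $\Lyap$ gives
\[
\Lyap(u)-\Lyap(u^*)=\int_0^{+\infty}\bigl(u^*(x)+x\bigr)w(x)\,dx+\frac{1}{2}\int_0^{+\infty}w(x)^2\,dx.
\]
The key pointwise identity is $u^*(x)+x=\max\{\sqrt{2\mass},x\}$, i.e.\ $u^*+x$ equals the constant $\sqrt{2\mass}$ on $[0,\sqrt{2\mass}]$ and equals $x\geq\sqrt{2\mass}$ on $[\sqrt{2\mass},+\infty)$.

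Using $\int w\,dx=0$ to eliminate the constant piece, the linear term rewrites as
\[
\int_0^{+\infty}(u^*+x)\,w\,dx=\sqrt{2\mass}\int_0^{\sqrt{2\mass}}w\,dx+\int_{\sqrt{2\mass}}^{+\infty}x\,w\,dx=\int_{\sqrt{2\mass}}^{+\infty}(x-\sqrt{2\mass})\,w(x)\,dx,
\]
which is nonnegative because both factors $x-\sqrt{2\mass}$ and $w=u$ are nonnegative on $[\sqrt{2\mass},+\infty)$. Combined with the manifest nonnegativity of $\tfrac12\int w^2$, this shows $\Lyap(u)\geq\Lyap(u^*)$, with equality forcing $w\equiv 0$ a.e., which yields uniqueness as well.

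I do not expect a real obstacle here: the argument is essentially the KKT optimality conditions for a strictly convex problem with one linear equality and the pointwise inequality $u\geq 0$, realized constructively. The only delicate point is the sign of $u^*+x-\sqrt{2\mass}$, which becomes negative nowhere on $\supp(w)$ after using the zero-mean condition on $w$; this is where the nonnegativity constraint on $u$ (equivalently $w\geq 0$ on $\{u^*=0\}$) is exactly what is needed to close the estimate.
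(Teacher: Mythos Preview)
Your argument is correct and complete. The expansion of $\Lyap(u)-\Lyap(u^*)$ is exact, the key identity $u^*(x)+x=\max\{\sqrt{2\mass},x\}$ is right, and the use of $\int w=0$ together with $w=u\geq 0$ on $\{x\geq\sqrt{2\mass}\}$ cleanly shows the linear term is nonnegative; strict positivity of $\tfrac12\int w^2$ unless $w\equiv 0$ gives uniqueness. All integrals are finite because $u,u^*\in L^2$ and $xu,xu^*\in L^1$ on the domain $D(\Lyap)$.

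Your route is genuinely different from the paper's. The paper substitutes $\varphi=\sqrt{u}$ to encode the constraint $u\geq 0$, writes the Euler--Lagrange equation with a Lagrange multiplier, obtains $\varphi(\varphi^2+x-\lambda/2)=0$, reads off the candidate $u(x)=\max\{\lambda/2-x,0\}$, fixes $\lambda$ from the mass constraint, and then invokes the strict convexity of $\Lyap$ to conclude. Your approach bypasses the Euler--Lagrange computation entirely and instead verifies the KKT conditions by hand via the direct quadratic expansion; the identity $u^*+x=\max\{\sqrt{2\mass},x\}$ is precisely the statement that the ``Lagrangian gradient'' $u^*+x-\lambda$ is zero where $u^*>0$ and nonnegative where $u^*=0$. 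This buys you a proof that is shorter, fully rigorous without appeal to first-order optimality theory, and avoids the slight awkwardness that after the substitution $u=\varphi^2$ the functional is no longer convex in $\varphi$. The paper's approach, on the other hand, has the advantage of \emph{discovering} the form of the minimizer rather than merely verifying it.
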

\begin{proof}
       Given
$u \in D(\Lyap)$, introduce 
the function $\varphi = \sqrt{u}$, so that 
        $$u(x) = \varphi^2(x)  \quad \text{ for every } \quad x \geq 0.$$
        In this way the pointwise constraint $u(x) \geq 0$ is trivially satisfied for every $x \geq 0$ and the variational problem
reads as follows:
        $$\min_{\varphi} \quad \frac{1}{2}
\int_0^{+\infty} \left( \varphi^4(x) + 2x\varphi^2(x)\right)~dx \quad \text{ subject to } \int^{+\infty}_0 \varphi^2(x)~dx = \mass.
$$
        The Euler-Lagrange equation associated to the above variational problem is
        \begin{align*}
        4\varphi^3(x) + 4x\varphi(x) -2\lambda\varphi(x)
= 0 \iff \varphi(x)\Bigg(\varphi^2(x)+x-\frac{\lambda}{2}\Bigg) = 0,
        \end{align*}
        where $\lambda$ denotes the Lagrange multiplier of the mass constraint. Such a condition is clearly satisfied if
        $$\varphi \equiv 0 \quad \text{ or } \quad 
\varphi^2(x) = \frac{\lambda}{2} - x \quad \forall x \in \oi,$$
        or, equivalently
        $$u \equiv 0 \quad \text{ or } \quad u(x) = \frac{\lambda}{2} - x \quad
\forall x \in \oi.$$
        We have thus determined that a stationary point of the variational problem has the form
        $$u(x) = \max\left\{\frac{\lambda}{2} - x, 0\right\} \qquad \forall 
x \in \oi.$$
        In order to determine $\lambda$ we impose the mass constraint on $u$, obtaining
        \begin{align*}
        \mass
 = \int^{+\infty}_0 u(x) dx = \int^{\lambda/2}_0 \left(\frac{\lambda}{2} - x\right)dx = \Big(\frac{\lambda^2}{4} - \frac{\lambda^2}{8}\Big) = \frac{\lambda^2}{8},
        \end{align*}
        which implies $\lambda = 2\sqrt{2\mass}.$ Plugging it in the expression for $u$
and recalling that $\Lyap$ is strictly convex,  we get the statement.
\end{proof}

\subsection{$\Lyap$ decreases along a solution}
\label{sec:Lyap_decreases_along_a_solution}
Let $\inidat \in D(\Lyap) \cap L^\infty((0,+\infty))$ satisfy 
\eqref{eq:inidat_larger_than_alpha} 
and denote by $\wu$ the solution given by Theorem \ref{teo:reconstructing_u_hat_from_l}. 
In coordinates $(\tau,\xi)$ we have

$$
\Lyap(\wu(\tau))=   \int_{\tau}^{+\infty}\int_{\xi-\tau}^{\xi-\tau+\wu(\tau,\xi)}
y~dy d\xi = 
\frac{1}{2} \int_{\tau}^{+\infty}
\left( \wu(\tau,\xi)^2 + 2 (\xi-\tau) \wu(\tau,\xi)\right)
d\xi,
$$
where $\wu(\tau)(\cdot) := \wu(\tau,\cdot)$, and one checks that $\wu(\tau)
\in D(\Lyap)$ for any $\tau \geq 0$. 
We claim that for all $\sigma,\tau \in \oi$
with $\sigma < \tau$ we have
\begin{equation}\label{eq:dissipation_equality}
\Lyap(\wu(\tau)) + \int_\sigma^\tau \int_{r + \uM(r)}^{+\infty} \wu(r,\xi)~d\xi~dr 
=\Lyap(\wu(\sigma)),
\end{equation}
where we recall that $\uM(r)=\wu(r,r)$, with $\uM$ given by 
Theorem \ref{teo:existence_of_l}. 

Since 
$\wu(\cdot, \xi)$ is Lipschitz, the function 
$\tau \to \Lyap(\wu(\tau))$ is locally absolutely continuous. 
Hence, at each of its differentiability points which
are also continuity points for $\uM$ (hence, at 
almost
every $\tau \geq 0$) we have, using also \eqref{eq:l_equal_l_u_hat},
\begin{equation}\label{eq:derivative_Lyap}
\begin{aligned}
& \frac{d}{d\tau} 
\Lyap(\wu(\tau))
=
-\frac{1}{2}\ell(\tau)^2+\int_\tau^{+\infty}
[\wu(\tau,\xi)+\xi-\tau]\wu_\tau(\tau,\xi)\, d\xi-\int_\tau^{+\infty}
\wu(\tau,\xi)\, d\xi\\
=&
-\frac{1}{2}\ell(\tau)^2+\int_\tau^{+\infty}
[\wu(\tau,\xi)+\xi-\tau]1_{(\tau,\tau+\ell(\tau))}(\xi)\, d\xi
-\int_\tau^{+\infty}
\wu(\tau,\xi)\, d\xi\\
\\
= & 
-\frac{1}{2}\ell(\tau)^2-\int_{\tau + \uM(\tau)}^{+\infty}
\wu(\tau, \xi)~d\xi + 
\int_\tau^{\tau + \uM(\tau)} (\xi-\tau)~d\xi
\\
=&-\int_{\tau+\ell(\tau)}^{+\infty}\wu(\tau,\xi)\, d\xi \leq 0.
\end{aligned}
\end{equation}
The nonnegative (double) integral term in \eqref{eq:dissipation_equality} 
can be considered as a dissipated quantity by the system, 
when passing from time
$\sigma$ to a later time $\tau$.

\section{Examples}\label{sec:examples}
Given $\inidat$, our strategy to construct a solution
of \eqref{eq:pde_intro} is as follows: 
first compute $\uM$ using one of the methods illustrated
in Section \ref{sec:another_formulation} 
(see Remark \ref{rem:summa}); next, using \eqref{eq:recovering_of_u_hat},
compute an integral solution in variables $(\tau, \xi)$ and,
whenever convenient, 
re-express it in variables $(t,x)$. 

\begin{Example}[\textbf{Stationary solutions}]
\label{exa:stationary_solutions}\rm
Let $m \geq 0$, 
\begin{equation}\label{eq:stationary_solution_a}
\inidat(x) := 
\max\Big\{
\sqrt{2\mass}  
-x, 0
\Big\}
 \qquad  \forall x \geq 0,
\end{equation}
so that
 $\mass=\int_{\oi} \inidat~dx$. 
Then 
\begin{equation}\label{eq:stationary_solution_b}
u(t,x) := u_0(x), \qquad (t,x) \in \oi \times \oi
\end{equation}
is a stationary solution of \eqref{eq:pde_intro}, and 
$\uM(t) = \sqrt{2\mass}$ for any $t \geq 0$,
$$
\freebound(u)  = \{(t,\sqrt{2\mass}) : t \geq 0\}.
$$
In  $(\tau,\xi)$-variables, 
$u(\tau,\xi) = \inidat(\xi-\tau)$ is
a travelling wave.
\end{Example}

The next example concerns the initial condition in
Example \ref{exa:costante_uno}: it
shows that maxima can increase, and also
that smoothness can be lost.

\begin{figure}
\hbox to\hsize{\hfil\includegraphics{\figdir/cman-2.mps}\hfil}
\caption{{\small Graph of
$L_\uM$ for 
Example~\ref{exa:characteristic_of_a_half_line}.
}}
\label{fig:characteristic_half_line_L}
\end{figure}

\begin{Example}[\textbf{Constant initial condition, II}]
\label{exa:characteristic_of_a_half_line}
Let 
\begin{equation}\label{eq:inidat_11}
u_0=1_{[0,+\infty)},
\end{equation}
which has infinite mass.  Recall that $\uM$ and $L_\uM$
have been computed in \eqref{eq:exa_l_1} and 
\eqref{eq:exa_L_1}, see Figs. \ref{fig:characteristic_half_line}
and \ref{fig:characteristic_half_line_L}.

For $(t,x)$ in the interior of $\soctopen \setminus 
{\rm subgraph}_+(\uM)$ we have $u(t,x)=1$. 
Observe that
$$
(t,x)\in 
{\rm subgraph}_+(\uM)
\cap ([T_k,T_{k+1}]\times[0,+\infty)) \Rightarrow
t+x \in [T_k,T_{k+1}] \cup [T_{k+1}, T_{k+2}],
$$
and we have
$t+x\in[T_k,T_{k+1}]$ when $0\le x\le -t+T_{k+1}$ while $ t+x\in[T_{k+1},
T_{k+2}]$ when $-t+T_{k+1}\le x< \uM(t)$.
Equivalently, in $(\tau, \xi)$ variables, we can split 
${\rm subgraph}_+(\uM)\cap ([T_k,T_{k+1}]\times[0,+\infty))$ as the union
of two disjoint regions:
$$
\begin{aligned}
S_{{\rm I}} :=& \{(\tau,\xi)\in \soct : T_k \leq \tau \leq T_{k+1}, 
\tau \leq \xi \leq T_{k+1}\},
\\
S_{{\rm II}} :=& \left\{(\tau,\xi) \in \soct: T_k \leq \tau \leq T_{k+1}, 
T_{k+1} < \xi < L_\uM(\tau) = 
\tau + \frac{\tau}{k+1} + \frac{k}{2}+1
\right\}.
\end{aligned}
$$
For $(\overline \tau, \overline \xi) \in S_{{\rm I}}$,
in order to find the time spent by the characteristic line
$\{\xi = \overline \xi\}$
in ${\rm subgraph}_+(\uM)$ before reaching the vertical axis, 
we need to compute the $\tau$-coordinate
of the intersection point of $\{\xi = \overline \xi\}$
with ${\rm graph}(L_\uM)_{\vert [T_{k-1}, T_k]}$; with 
$\overline \xi = L_\uM(\tau) = 
\frac{k+1}{k}\tau + \frac{k+1}{2}$ (see \eqref{eq:exa_L_1}) we get
$$
\tau = 
\frac{k}{k+1}
\Big(\overline \xi-\frac{k+1}{2}\Big).
$$
The time spent is therefore $\overline \tau  - \tau = 
\overline \tau - \frac{k}{k+1}
(\overline \xi-\frac{k+1}{2})$ and so, recalling \eqref{eq:recovering_of_u_hat}
and \eqref{eq:inidat_11}, 
\begin{equation}\label{eq:time_spent_I}
v(\overline \tau, \overline \xi) = 1 + \overline \tau 
-\frac{k}{k+1}
\Big(\overline \xi-\frac{k+1}{2}\Big) \qquad \forall (\overline \tau, \overline \xi)
\in S_{\rm I}.
\end{equation}
For $(\overline \tau, \overline \xi) \in S_{{\rm II}}$,
we need the $\tau$-coordinate
of the intersection point of $\{\xi = \overline \xi\}$
with ${\rm graph}(L_\uM)_{\vert [T_{k}, T_{k+1}]}$; with 
$\overline \xi = L_\uM(\tau) = 
\frac{k+2}{k+1}\tau + \frac{k+2}{2}$ we get
$$
\tau = 
\frac{k+1}{k+2}
\Big(\overline \xi-\frac{k+2}{2}\Big).
$$
The time spent is therefore $\overline \tau  - \tau = 
\overline \tau - \frac{k+1}{k+2}
\Big(\overline \xi-\frac{k+2}{2}\Big)$, and so 
\begin{equation}\label{eq:time_spent_II}
v(\overline \tau, \overline \xi) = 1 + \overline \tau 
-\frac{k+1}{k+2}
\Big(\overline \xi-\frac{k+2}{2}\Big) \qquad \forall (\overline \tau, \overline \xi)
\in S_{\rm II}.
\end{equation}
Note (see Fig. \ref{fig:exa_semiretta}) that for any $k \in \NN$, 
\begin{itemize}
\item[(i)]
for any $\overline \tau \in (T_k, T_{k+1})$, the derivative
of $v(\overline \tau,\cdot)$ has two  jumps, corresponding
to the intersection of $\{\tau = \overline \tau\}$
with $\{\xi=T_{k+1}\}$ and with 
${\rm graph}(L_\uM)_{\vert (T_k, T_{k+1})}$;
\item[(ii)] for any $\overline \xi \in (T_k, T_{k+1})$, the derivative
of $v(\cdot,\overline \xi)$ has one jump, corresponding 
to the intersection of $\{\xi = \overline \xi\}$
with ${\rm graph}(L_\uM)_{\vert (T_k, T_{k+1})}$.
\end{itemize}
Going back to $(t,x)$-coordinates,
from \eqref{eq:time_spent_I} and  \eqref{eq:time_spent_II} we have, for any $k\ge0$
and any $t\in[T_k,T_{k+1}]$,
\begin{equation}
u(t,x)=\begin{cases}
\frac{t}{k+1} -\frac{k}{k+1} x +1+\frac{k}{2} & \hbox{for $0\le x\le
-t+T_{k+1}$},\\
\frac{t}{k+2} -\frac{k+1}{k+2} x +1+\frac{k+1}{2}
& \hbox{for $-t+T_{k+1}< x\le \frac{t}{k+1}+1+\frac{k}{2}$},\\
1 & \hbox{for $x> \frac{t}{k+1}+1+\frac{k}{2}$}.
\end{cases}
\end{equation}
Function $u$ is Lipschitz, piecewise affine,
it forms an initial plateau, originating from $x=1$,
that moves vertically upwards with 
speed one, which is next linearly interpolated with the constant
one (which does not move, and is eroded),
the $x$-slope of the interpolants being 
equal to $-1/2$. The two points (both 
originating from $x=1$) on the $x$-axis
corresponding to the two corners in the graph 
of $u(t,\cdot)$, $t \in (0,1]$ move with unit speed, one toward
the left and the other one toward the right; 
see Fig. \ref{fig:exa_semiretta}. 
\end{Example}
\def\+#1+{\includegraphics{\figdir/half-#1.mps}}
\begin{figure}
\hbox to\hsize{\+1+\hfil\+2+\hfil\+3+\hfil\+4+}
\bigskip
\hbox to\hsize{\+5+\hfil\+6+\hfil\+7+\hfil\+8+}
\bigskip
\hbox to\hsize{\+9+\hfil\+10+\hfil\+11+\hfil\+12+}
\bigskip
\hbox to\hsize{\+13+\hfil\+14+\hfil\+15+\hfil\+16+}
\bigskip
\hbox to\hsize{\+17+\hfil\+18+\hfil\+19+\hfil\+20+}
\bigskip
\hbox to\hsize{\+21+\hfil\+22+\hfil\+23+\hfil\+24+}
\caption{Time-slices of the solution $u$ 
in Example~\ref{exa:characteristic_of_a_half_line}
\nada{
$k=0$, $(t,x) \in [0,1] \times 
\oi$ and
$u(t,x) = 
t+1$ if 
$0 \leq x \leq -t+1$ and 
$t \in [0,1]$,
$u(t,x) = 
\frac{t}{2} 
-\frac{x}{2} 
+ \frac{3}{2}$ if
$-t+1 \leq x \leq t+1$ and 
$t \in [0,1]$,
$u(t,x)= 1$ if 
$x \geq t+1$ and $t \in [0,1]$.
}
}
\label{fig:exa_semiretta}
\end{figure}

\begin{Example}[\textbf{Increasing jump}]\rm
Let $\inidat(x) =1_{[0,1)} + 2 1_{[1,+\infty)}$. For 
$t \in [0,1]$ a solution is 
$u(t,x) = 1_{[1-t, 1)}(x) + (2+t) 1_{[1,1+t)} + 2 1_{[1+t,+\infty)}$.
In this case, the initial increasing jump travels
toward the left at unit constant speed and then 
disappears, and meanwhile a new decreasing jump
is formed during the evolution. 
\end{Example}



\section{The Riemann problem}\label{sec:the_Riemann_problem}
The next example, which has been
initially discussed in Example \ref{exa:the_Riemann_pbm},
 exhibits interesting phenomena (Figs.
\ref{fig:Riemann_movie_I}, \ref{fig:Riemann_movie_II}), and shows in particular
a solution 
for which:
\begin{itemize}
\item[(i)]  
the initial jump persists for some time, it moves with unit speed
toward the origin, and then disappears;
\item[(ii)] 
at suitable later times new jumps may form (and persist for some time
with a similar behaviour as in (i)); 
\item[(iii)] there are countably many critical 
times, and so vertical rearrangement is necessary
a countable number of times; 
\item[(iv)] the solution converges to a 
stationary solution in infinite time;
\item[(v)] the Lyapunov functional along the solution is Lipschitz
and is 
constant excluding segments which are projection
on the $t$-axis
of critical segments,
where instead it strictly decreases.
\end{itemize}

Let
\begin{equation}\label{eq:inidat_Riemann}
\inidat=1_{[0,1)}. 
\end{equation}

\begin{figure}
\hbox to\hsize{\hfil\includegraphics{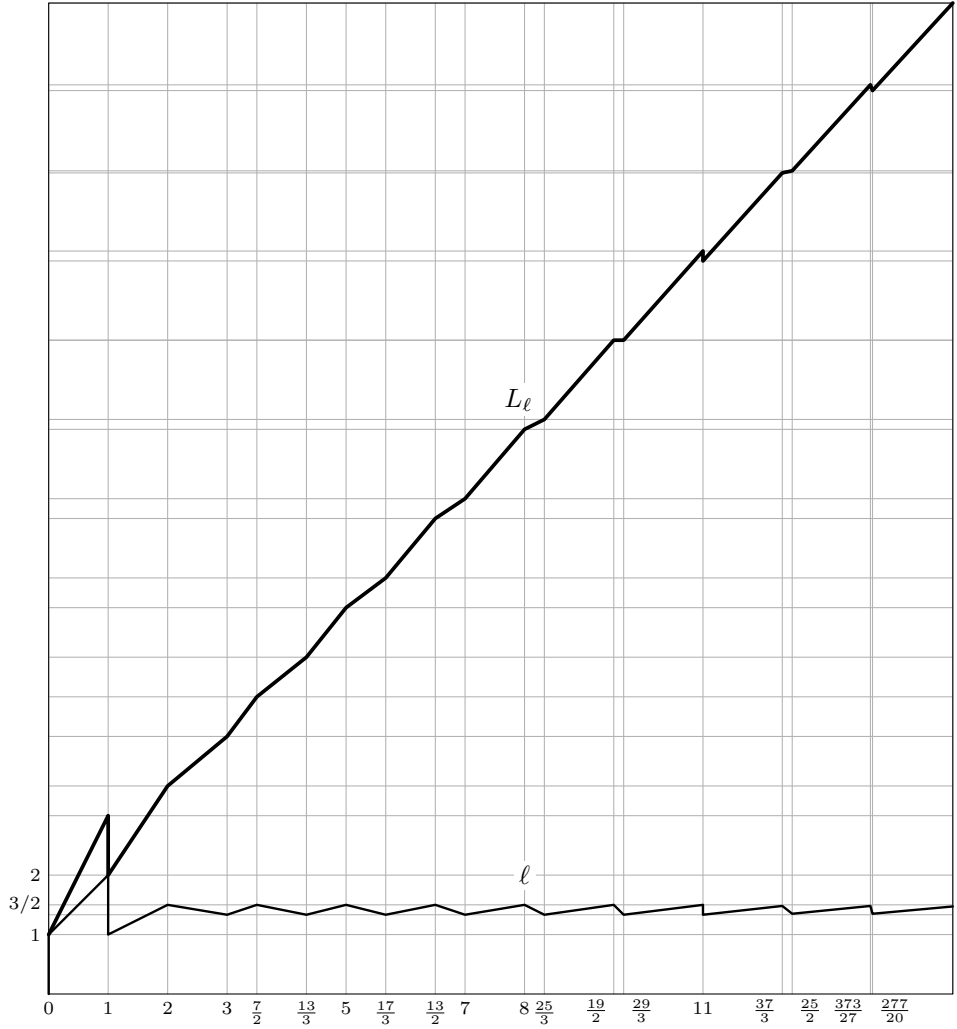}\hfil}
\caption{
{\small 
The Riemann problem with $\inidat$ in \eqref{eq:inidat_Riemann}: 
graphs of $\uM$ (the polygonal defined by \eqref{eq:polP}) and of $L_\uM$ 
(the bold one).
At the local maximum 
$T_9=37/3$ the value of $\uM$ is $\lmax_9=40/27$ which is
 strictly smaller 
than the one $(\lmax_8=3/2)$
at time $T_8=11$. At the
first local minimum $t_9 > T_9$ the value of $\uM$ is 
$\lmin_9=27/20$ 
which is strictly  larger than the value $(\lmin_8=4/3)$ of $\uM$ at $\tmin_8$.
The first
critical time (Definition \ref{def:critical_segments_and_critical_times})  is $t=1$, and the second one is $t = 37/3$.
The distance $\tmin_{n+1} - \tmin_n = 4/3$
between two local minimizers (after $t=1$) 
remains constant before the first critical time; next it slightly 
increases,
remaining constant before the second critical time, and so on.} 
}
\label{fig:riemann-ell}
\end{figure}

In Example \ref{exa:the_Riemann_pbm} we have found $\uM$ in $[0, 11+3/2]$; here
we give the general rule for finding $\uM$ (recall that
$\uM$ may have jumps, and over a jump point
there is a vertical segment in its generalized graph; correspondingly,
we have a minimal and a maximal value of $\uM$;
for convenience,
the generalized graph of $\uM$ contains the initial vertical segment $\{0\} \times [0,1]$).

\begin{figure}
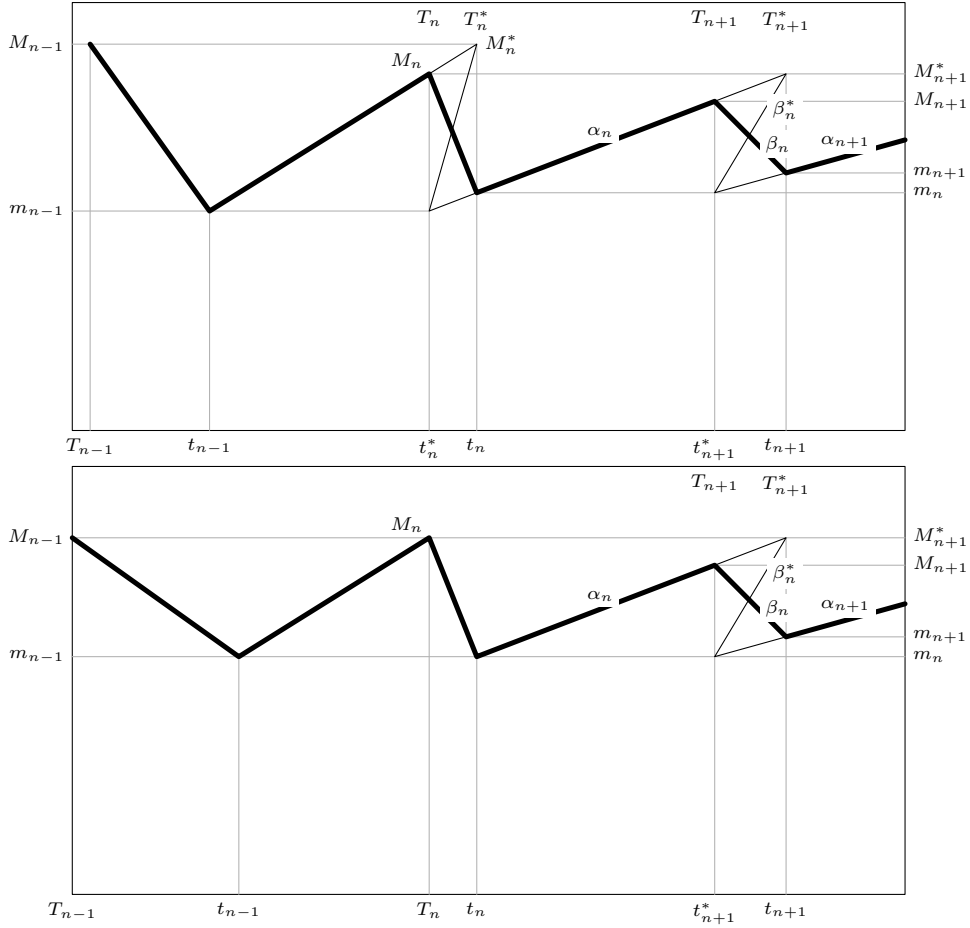

\begin{center}
\includegraphics{\figdir/slopes-2.mps}
\includegraphics{\figdir/slopes-1.mps}
\end{center}
\caption{\small Vertical rearrangement takes place in the 
interval $(\tmins_{n+1},
\tmaxs_{n+1})$ where the thin zig-zag line 
(which is part of the polygonal $\mathcal {P}^*$)
is no-longer a graph.
The result of the rearrangement is the bold decreasing
segment (which is part of the polygonal $\mathcal P$) with negative slope $\beta_n$ and can be obtained by linear
interpolation.
In the previous interval $(\tmins_n,\tmaxs_n)$ we may have either
vertical rearrangement (top figure; the segments over
$(\tmax_{n-1},\tmin_{n-1})$ and $(\tmin_n^*, \tmin_n)$ are both critical) or
no vertical rearrangement (bottom figure; only the segment over $(\tmax_n, \tmin_n)$ is critical).}
\label{fig:slopes}
\end{figure}
Our aim is to define inductively a polygonal curve that we shall subsequently 
prove (Theorem 
\ref{teo:the_polygonal_is_the_graph_of_ell})
to be the graph of the function $\ell$ with the initial condition 
\eqref{eq:inidat_Riemann}.

Inspired by the computations in Example \ref{exa:the_Riemann_pbm}
let us first define a real sequence $(\alpha_n)_{n\geq 0}$ and a sequence
 $(\beta_n)_{n\geq 0}$,
with $\beta_n \in [-\infty,0) \cup (0,+\infty)$, that will be
shown to be the slopes of the increasing (resp. decreasing\footnote{In Lemma 
\ref{teo:bounds_on_beta} we shall
prove that $\beta_n \in [-\infty,0)$.}) 
segments of the graph of $\ell$; it is convenient to
introduce also a real sequence
$(\beta_n^*)_{n \geq 1}$. Specifically:
\begin{Definition}
[\textbf{The sequences $(\alpha_n)$, $(\beta_n^*)$, $(\beta_n)$}]
We set
\begin{equation}\label{eq:slopes_initializing}
\alpha_0 := 1 , \qquad \beta_0 := -\infty ,
\end{equation}
and for any $n \geq 0$,
\begin{equation}\label{eq:slopes}
\begin{aligned}
&\frac{1}{\alpha_{n+1}} := \frac{1}{\alpha_n} + 1, \qquad \text{i.e. }
    \alpha_n = \frac{1}{n+1},
\\
& \frac{1}{\beta_{n+1}^*}
  :=\begin{cases}
  \frac{1}{\beta_n} + 1
  & {\rm if}~
  \beta_n \neq -1,0,
\\
1
  & {\rm if}~
  \beta_n  = -\infty, 
\\
  0^-
  & {\rm if}~
  \beta_n = -1,
  \end{cases}
\\
&
  \beta_{n+1} :=
  \begin{cases}
  \beta_{n+1}^* \qquad & 
{\rm if}~ \beta_{n+1}^* \leq 0,
  \\
  \alpha_{n+1} + \alpha_{n+2} - \beta_{n+1}^* 
& 
{\rm if}~  \beta_{n+1}^* > 0,
  \end{cases}
\end{aligned}
\end{equation}
where the value $0^-$ indicates that when computing its reciprocal we 
select
the value $-\infty$, i.e., $\beta_{n+1}^* = -\infty$.

\end{Definition}
For instance, $1/\beta_1^* = 1$, $\beta_1 = \alpha_1 + \alpha_2 - 1 = -1/6$.

Set 
\begin{equation}
\tmax_0 := 0, \qquad \lmax_0 := 0,
\qquad
\tmin_0 := 0, \qquad \lmin_0 := 1.
\label{eq:initial_conditions_t_M}
\end{equation}
Now, we iteratively define nine sequences
(the first five sequences could be considered as just
auxiliary), as follows: 
\begin{Definition}[\textbf{The sequences 
$(\tmins_n)$, 
$(\lmins_n)$,
$(\tmaxs_n)$, 
$(\lmaxs_n)$, 
$(\Delta_n^*)$, 
$(\tmax_n)$, 
$(\lmax_n)$, 
$(\tmin_n)$, 
$(\lmin_n)$}]
For $n \geq 0$ define the quantities:
\begin{equation}\label{eq:tmin_tmax}
\tmins_{n+1} := \tmin_n + \lmin_n , \quad
\lmins_{n+1} := \lmin_n , \quad
\tmaxs_{n+1} := \tmax_n + \lmax_n , \quad
\lmaxs_{n+1} := \lmax_n,
\end{equation}
$$
\Delta^*_n := \tmaxs_{n+1} - \tmins_{n+1}.
$$
Furthermore:
\begin{itemize}
\item[(i)]
if $\Delta^*_n \leq 0$ 
we set
\begin{equation}\label{eq:tmax_tmin_no_rearrangement}
\begin{aligned}
& \tmax_{n+1} := \tmaxs_{n+1} + \delta_n, \quad
\lmax_{n+1} := \lmaxs_{n+1} + 2\delta_n, 
\\
& \tmin_{n+1} := \tmins_{n+1} , \qquad \quad \ \  
\lmin_{n+1} := \lmins_{n+1},
\end{aligned}
\end{equation}
where $\delta_n$ is $1$ if $n=0$, zero otherwise;
\item[(ii)]
if $\Delta^*_n > 0$ we set
\begin{equation}\label{eq:tmax_tmin_rearrangement}
\begin{aligned}
\tmax_{n+1} := \tmins_{n+1} , \quad
& \lmax_{n+1} := \lmaxs_{n+1} - \Delta^*_n \alpha_n,
\\
\tmin_{n+1} := \tmaxs_{n+1} , \quad
& \lmin_{n+1} := \lmins_{n+1} + \Delta^*_n \alpha_{n+1} .
\end{aligned}
\end{equation}
\label{def:seqences_t_m}
\end{itemize}
\end{Definition}
For instance 
$\Delta_0^*=-1$, 
$\tmax_1 = 1$, 
$\lmax_1=2$,
$\tmin_1 = 1$, 
$\lmin_1=1$, 
$\Delta_1^*=1$, 
$\tmax_2 =2$, 
$\lmax_2=  3/2$,
$\tmin_2 = 3$, 
$\lmin_2=4/3$,
$\tmax_3 = 7/2$, 
 $\lmax_3=  3/2$. 

Clearly,
 for all $n \geq 0$
$$
0 < \tmax_n \leq \tmax_{n+1}, \quad  
0 < \tmin_{n} \leq \tmin_{n+1}, \quad
\lmax_{n+1} \leq \lmax_n \leq 2, \quad
\lmin_{n+1} \geq \lmin_n \geq 1.
$$

\begin{Remark}[\textbf{$\Delta_n^*$ as an indicator of rearrangement}]\rm
We shall prove in Proposition \ref{prop:different_form} that 
$\tmax_n \leq \tmin_n$ for all $n \geq 0$; this however does not
entail $\tmaxs_{n+1} \leq \tmins_{n+1}$.
If this is not the case we shall need to perform the so-called vertical rearrangement, as explained next\footnote{
For example, the first rearrangement happens for 
$n+1= 2$, leading to $M_{n+1}/m_{n+1} = (3/2)^2 \cdot 1/2 = 9/8 > 1$; 
the second one is for
$n+1 = 9$, leading to 
$M_9/m_9 = (10/9)^2 \cdot 8/9 = 800/729 > 1$, and the third one
for 
$n+1 = 11$, with 
$M_11/m_11 = (12/11)^2 \cdot 729/800 = 6561/6050  >  1$.
}:
so if $\Delta^*_n \leq 0$ 
there will be no vertical rearrangement,
whereas 
if $\Delta^*_n > 0$ there will be vertical rearrangement.

In Proposition 
\ref{prop:induction} 
we shall see that condition $\tmaxs_{n+2} > \tmins_{n+2}$ is equivalent to $\beta_{n+1}^* > 0$.
\end{Remark}

We now use the sequences defined in \eqref{eq:tmin_tmax}, \eqref{eq:tmax_tmin_no_rearrangement},
\eqref{eq:tmax_tmin_rearrangement} as coordinates of points in the $(t,y)$-plane,
setting
$$
\gmin_n := (\tmin_n,\lmin_n),
\quad
\gmax_n := (\tmax_n,\lmax_n),
\quad
\gmins_n := (\tmins_n,\lmins_n), 
\quad
\gmaxs_n := (\tmaxs_n,\lmaxs_n), 
$$
and observe that from 
\eqref{eq:tmin_tmax}
that 
\begin{equation}\label{eq:transformg}
\gmins_{n+1} = \affinetransformation(\gmin_n), \qquad
\gmaxs_{n+1} = \affinetransformation(\gmax_n),
\end{equation}
where $\affinetransformation$ is the affine transformation
of 
Section \ref{subsec:transformation_R_and_rearrangement}.

\begin{Definition}[\textbf{Polygonals $\mathcal {P}^*$ and $\mathcal P$}]
The polygonal $\mathcal{P}^*$ is  defined by the points
\begin{equation}\label{eq:polPstar}
\gmaxs_1, \gmins_1, \gmaxs_2, \gmins_2, \dots,
\gmaxs_{n+1}, \gmins_{n+1}, \dots 
\end{equation}
and the polygonal $\mathcal{P}$ is 
defined by the points
\begin{equation}\label{eq:polP}
\gmax_0, \gmin_0, \gmax_1, \gmin_1, \dots,
\gmax_n, \gmin_n, \dots
\end{equation}
\end{Definition}
We will show that the function having graph 
$\mathcal{P}$ 
is the result of the vertical rearrangement 
applied to the open set enclosed by $\mathcal{P}^*$ and the
positive $t$-axis, after addition of the initial datum $\inidat$. See 
also Figs. \ref{fig:slopes} and \ref{fig:riemann_specific_times}.

\begin{Proposition}[\textbf{Characterization}]\label{prop:different_form}
The sequences $(\tmax_n)$, $(\tmin_n)$, $(\lmax_n)$,
$(\lmin_n)$, $(\lmax_n^*)$, $(\lmin_n^*)$ 
defined in \eqref{eq:initial_conditions_t_M} and in Definition~\ref{def:seqences_t_m} can be
characterized as follows: for all $n >0$ 
\begin{equation}\label{eq:charat}
\tmax_{n+1}=\min\{\tmax_{n}+\lmax_{n},\tmin_{n}+\lmin_{n}\},
\qquad
\tmin_{n+1}=\max\{\tmax_{n}+\lmax_{n},\tmin_{n}+\lmin_{n}\},
\end{equation}
\begin{equation}\label{eq:charam}
\lmax_{n+1}
=\alpha_n(1+\tmax_{n+1}),
\qquad
\lmin_{n+1}
=\alpha_{n+1}(1+\tmin_{n+1}),
\end{equation}
\begin{equation}\label{eq:charamstar}
\lmax_{n+1}^*
=\alpha_n(1+\tmax_{n+1}^*),
\qquad
\lmin_{n+1}^*
=\alpha_{n+1}(1+\tmin_{n+1}^*).
\end{equation}
In particular, $\lmin_{n+1}>0$,  $\lmax_{n+1} >0$, and 
$$
\tmax_{n+1} \leq \tmin_{n+1}.
$$
\end{Proposition}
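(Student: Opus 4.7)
The plan is an induction on $n \ge 1$, the engine of which is the single algebraic identity
\[
1 + \alpha_n \;=\; \frac{\alpha_n}{\alpha_{n+1}},
\]
which is immediate from $1/\alpha_{n+1} = 1/\alpha_n + 1$ (equivalently, $\alpha_n = 1/(n+1)$). I would treat \eqref{eq:charat}, \eqref{eq:charam}, \eqref{eq:charamstar} together as a single inductive hypothesis, and derive the conclusions $\lmax_{n+1},\lmin_{n+1}>0$ and $\tmax_{n+1}\le \tmin_{n+1}$ as free by-products at the end.

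First, the base case $n=1$: from \eqref{eq:initial_conditions_t_M}, \eqref{eq:tmin_tmax}, and case (i) of Definition~\ref{def:seqences_t_m} with $\delta_0=1$ (since $\Delta_0^*=-1\le 0$), one computes $\tmax_1=\tmin_1=1$, $\lmax_1=2$, $\lmin_1=1$, and \eqref{eq:charam} is verified directly: $2=\alpha_0\cdot 2$ and $1=\alpha_1\cdot 2$.

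For the inductive step, assume \eqref{eq:charam} at index $n$. Using \eqref{eq:tmin_tmax} and the key identity, I would write
\[
1+\tmaxs_{n+1} \;=\; 1+\tmax_n+\lmax_n \;=\; (1+\tmax_n)(1+\alpha_{n-1})
\;=\;(1+\tmax_n)\,\frac{\alpha_{n-1}}{\alpha_n},
\]
so that $\alpha_n(1+\tmaxs_{n+1})=\alpha_{n-1}(1+\tmax_n)=\lmax_n=\lmaxs_{n+1}$, which is the first half of \eqref{eq:charamstar}; the identity $\lmins_{n+1}=\alpha_{n+1}(1+\tmins_{n+1})$ is obtained in exactly the same way from $\lmins_{n+1}=\lmin_n$ and $\lmin_n=\alpha_n(1+\tmin_n)$. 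This proves \eqref{eq:charamstar} at index $n+1$.

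Then \eqref{eq:charat} and \eqref{eq:charam} at $n+1$ follow by a two-case split. If $\Delta_n^* \le 0$ (no rearrangement, and $\delta_n=0$ for $n\ge 1$), then \eqref{eq:tmax_tmin_no_rearrangement} gives $(\tmax_{n+1},\lmax_{n+1},\tmin_{n+1},\lmin_{n+1})=(\tmaxs_{n+1},\lmaxs_{n+1},\tmins_{n+1},\lmins_{n+1})$, so \eqref{eq:charam} is just a restatement of \eqref{eq:charamstar}, and \eqref{eq:charat} holds because $\tmaxs_{n+1}\le\tmins_{n+1}$ by assumption. If instead $\Delta_n^*>0$, then \eqref{eq:tmax_tmin_rearrangement} yields $\tmax_{n+1}=\tmins_{n+1}$ and
\[
\lmax_{n+1}=\lmaxs_{n+1}-\Delta_n^*\,\alpha_n
=\alpha_n(1+\tmaxs_{n+1})-(\tmaxs_{n+1}-\tmins_{n+1})\alpha_n
=\alpha_n(1+\tmax_{n+1}),
\]
with the analogous calculation for $\lmin_{n+1}=\alpha_{n+1}(1+\tmin_{n+1})$; in this case $\tmin_{n+1}=\tmaxs_{n+1}>\tmins_{n+1}=\tmax_{n+1}$, again matching \eqref{eq:charat}. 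Positivity of $\lmax_{n+1}, \lmin_{n+1}$ is then automatic from $\alpha_n,\alpha_{n+1}>0$ and $\tmax_{n+1},\tmin_{n+1}\ge 0$, while $\tmax_{n+1}\le\tmin_{n+1}$ is nothing but the trivial inequality $\min\le\max$ in \eqref{eq:charat}.

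There is no real obstacle here; the only thing to watch is the bookkeeping of the shifted indices ($\alpha_{n-1}$ versus $\alpha_n$ in the \textquotedblleft max\textquotedblright\ equation) and the fact that the definitional case split on the sign of $\Delta_n^*$ matches exactly the $\min$/$\max$ in \eqref{eq:charat}, so that the rearrangement formula \eqref{eq:tmax_tmin_rearrangement} produces precisely the value of $\lmax_{n+1}$ compatible with the linear law $\lmax_{n+1}=\alpha_n(1+\tmax_{n+1})$; this is what makes the induction close cleanly.
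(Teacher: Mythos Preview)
Your proof is correct and is essentially the same induction as the paper's, only presented algebraically rather than geometrically. The paper packages the inductive step by introducing the lines $r_n:\ y=\alpha_n(1+t)$ through $(-1,0)$ and observing that the affine map $\mathcal R$ sends $r_n$ onto $r_{n+1}$, so that $\gmin_n,\gmins_n,\gmax_{n+1},\gmaxs_{n+1}\in r_n$ for all $n$; your identity $1+\alpha_n=\alpha_n/\alpha_{n+1}$ is exactly the algebraic content of ``$\mathcal R(r_n)=r_{n+1}$'', and your case split on $\Delta_n^*$ corresponds to the paper's remark that in the rearrangement case the correction segments $\gmins_{n+1}\gmin_{n+1}$ and $\gmax_{n+2}\gmaxs_{n+2}$ have slope $\alpha_{n+1}$, hence stay on $r_{n+1}$. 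Either viewpoint closes the induction in the same way; the geometric framing pays off later in Proposition~\ref{prop:induction}, while your algebraic version is slightly more self-contained here.
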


\begin{proof}
Let $n>0$. If $\Delta^*_n\le 0$, i.e., $\tmax_n+\lmax_n\le
\tmin_n+\lmin_n$, 
then 
$\tmax_{n+1}=\tmax_n+\lmax_n$
and $\tmin_{n+1}=\tmin_n+\lmin_n$, while
if $\Delta^*_n> 0$, i.e.,
 $\tmax_n+\lmax_n>
\tmin_n+\lmin_n$, then 
$\tmax_{n+1}=\tmin_n+\lmin_n$ and
$\tmin_{n+1}=\tmax_n+\lmax_n$, and \eqref{eq:charat}
follows.

For $n \geq 0$ let $y = r_n(t) := \alpha_n (1+t)$ denote the equation of 
the line with slope $\alpha_n$ through $(-1,0)$. With a small abuse of 
notation we also denote by $r_n$ the line itself. The key observation is 
that the affine transformation $\affinetransformation$ maps line $r_n$ 
onto line $r_{n+1}$.
Given that $\gmin_0, \gmax_1 \in r_0$, we now argue by induction to prove that
for any $n \geq 1$ points 
$\gmin_n, \gmins_n, \gmax_{n+1}, \gmaxs_{n+1} \in r_n$.
Now suppose $\gmin_n, \gmax_{n+1} \in r_n$, then \eqref{eq:transformg} implies
that $\gmins_{n+1}, \gmaxs_{n+2} \in r_{n+1}$.
If $\gmin_{n+1} \neq \gmins_{n+1}$, i.e. 
rearrangement occurs at step $n$ (resp. $\gmax_{n+2} \neq \gmaxs_{n+2}$, i.e. 
rearrangement occurs at step $n+1$), equation 
\eqref{eq:tmax_tmin_rearrangement} tells us that the segment 
$\gmins_{n+1}\gmin_{n+1}$ (resp. $\gmax_{n+2}\gmaxs_{n+2}$) has slope 
$\alpha_n$, so that the ``unstarred'' points are the result of suitably moving
along line $r_{n+1}$ the corresponding ``starred'' point.
This concludes the induction step and the claim follows together with the
equivalent assertions \eqref{eq:charam} and \eqref{eq:charamstar}.
\end{proof}

\begin{Remark}\label{rem:separate}\rm
It follows from Proposition~\ref{prop:different_form},
using e.g. $\frac{\alpha_n}{\alpha_{n+1}} = 1 + \alpha_n$
that the sequences $(\tmin_n)$ and $(\tmax_n)$ can be decoupled
from the sequences $(\lmin_n)$ and $(\lmax_n)$, and for all $n >0$
\begin{equation}\label{eq:decoupling}
\begin{aligned}
&\tmax_{n+1}=\min\Bigl\{\alpha_{n-1}+\frac{\alpha_{n-1}}{\alpha_n} \tmax_{n}, 
\alpha_n
+\frac{\alpha_n}{\alpha_{n+1}} \tmin_{n}\Bigr\},
\\
&\tmin_{n+1}=\max\Bigl\{\alpha_{n-1}+\frac{\alpha_{n-1}}{\alpha_n} 
\tmax_{n}, \alpha_n
+\frac{\alpha_n}{\alpha_{n+1}} \tmin_{n}\Bigr\}.
\end{aligned}
\end{equation}
Concerning the values $(\lmin_n)$, $(\lmax_n)$, in the interesting case $\Delta_n^* > 0$
(rearrangement), enforcing \eqref{eq:tmin_tmax}, \eqref{eq:tmax_tmin_rearrangement}, \eqref{eq:charam}, \eqref{eq:charamstar},
we obtain
\begin{equation}\label{eq:in_the_interesting_case}
\Delta_n^* > 0 \implies \quad
\lmax_{n+1} = \frac{\alpha_n}{\alpha_{n+1}}
\lmin_n , \quad
\lmin_{n+1} = \frac{\alpha_{n+1}}{\alpha_n} \lmax_n
\end{equation}
leading to the remarkable fact that 
$\lmin_{n+1}\lmax_{n+1} = \lmin_n\lmax_n$
regardless whether we have rearrangement.
Consequently 
\begin{equation}\label{eq:prod_2}
\lmin_n \lmax_n = \lmin_1 \lmax_1 = 2 \qquad \forall n \geq 1.
\end{equation}
Recalling that $(\lmin_n)$ is non-decreasing and $(\lmax_n)$ is non-increasing, using \eqref{eq:charat} we
get
$$
\lmin_n \geq 1, \qquad \lmax_n \leq 2,
$$
$$
\tmin_n \geq \tmin_1 + \lmin_1 + \lmin_2 + \dots + \lmin_{n-1} \geq \tmin_1 + (n-1)\lmin_1 = n ,
$$
$$
\tmax_n \leq \tmax_1 + \lmax_1 + \lmax_2 + \dots + \lmax_{n-1} \leq \tmax_1 + (n-1)\lmax_1 = 2n - 1.
$$
\end{Remark}

\begin{Lemma}[\textbf{Sign of $\beta_n$}]\label{teo:bounds_on_beta}
We have $\beta_n \in [-\infty,0)$ and 
$\beta_n \in [-\infty, -1 + \alpha_n + \alpha_{n+1}]$
for all $n \geq 0$.
\end{Lemma}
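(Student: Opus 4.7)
The plan is to prove both containments by induction on $n$, using the case-by-case recursion \eqref{eq:slopes} defining $\beta_{n+1}^*$ and $\beta_{n+1}$ from $\beta_n$. The base case $n=0$ is immediate, since $\beta_0=-\infty$ lies in every extended-real interval of the form $[-\infty,c]$, and both memberships hold vacuously.

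For the inductive step, assume $\beta_n\in[-\infty,0)$ with $\beta_n\leq -1+\alpha_n+\alpha_{n+1}$. I would split into the four regimes dictated by the piecewise definition: $\beta_n=-\infty$, $\beta_n\in(-\infty,-1)$, $\beta_n=-1$, and $\beta_n\in(-1,0)$. In the first three regimes one computes from the identity $\beta_{n+1}^*=\beta_n/(1+\beta_n)$ (together with the stated limit conventions at the singular values $-\infty$ and $-1$) that $\beta_{n+1}^*\geq 1$: indeed, for $\beta_n\in(-\infty,-1)$ the formula reads $\beta_{n+1}^*=|\beta_n|/(|\beta_n|-1)\geq 1$, while $\beta_n=-\infty$ gives $\beta_{n+1}^*=1$ directly and $\beta_n=-1$ sends $\beta_{n+1}^*$ to $-\infty$ (at which point the claim holds trivially). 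In each sub-case with $\beta_{n+1}^*\geq 1>0$, the rearrangement branch of \eqref{eq:slopes} applies and gives
\[
\beta_{n+1}=\alpha_{n+1}+\alpha_{n+2}-\beta_{n+1}^*\leq \alpha_{n+1}+\alpha_{n+2}-1,
\]
which is exactly the refined upper bound $-1+\alpha_{n+1}+\alpha_{n+2}$; negativity follows because $\alpha_{n+1}+\alpha_{n+2}=\tfrac{1}{n+2}+\tfrac{1}{n+3}<1$ for every $n\geq 0$.

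In the remaining regime $\beta_n\in(-1,0)$, the map $\beta_n\mapsto \beta_n/(1+\beta_n)$ has negative numerator and positive denominator, yielding $\beta_{n+1}=\beta_{n+1}^*\in(-\infty,0)$ and selecting the no-rearrangement branch, so the basic sign claim $\beta_{n+1}\in[-\infty,0)$ is immediate. To propagate the refined upper bound, I would combine the inductive hypothesis with the monotonicity of $x\mapsto x/(1+x)$ on $(-1,+\infty)$ and the elementary telescoping identity $\alpha_{n+1}=\alpha_n/(1+\alpha_n)$ (a restatement of $\alpha_n=1/(n+1)$), reducing the required inequality to an algebraic manipulation in terms of the reciprocals $1/\beta_n$ and $1/\alpha_n$ that closes exactly because of the arithmetic structure of $\alpha_n$.

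I expect the fourth regime to be the main obstacle: negativity of $\beta_{n+1}$ is easy, but the refined upper bound must be propagated through the nonlinear map $x\mapsto x/(1+x)$, and the argument relies crucially on the explicit form $\alpha_n=1/(n+1)$ and on the way successive no-rearrangement iterations of $\beta$ interact with the threshold $-1$ before the next rearrangement is triggered; this is where a careful bookkeeping of the orbit structure is needed, and it is the computationally heaviest part of the proof.
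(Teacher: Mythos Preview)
Your treatment of the first containment $\beta_n\in[-\infty,0)$ is correct and is essentially the paper's argument: both induct on $n$ and split according to whether $\beta_n\leq -1$ (so $\beta_{n+1}^*\geq 1$ and $\beta_{n+1}=\alpha_{n+1}+\alpha_{n+2}-\beta_{n+1}^*\leq \alpha_{n+1}+\alpha_{n+2}-1<0$) or $\beta_n\in(-1,0)$ (so $\beta_{n+1}=\beta_n/(1+\beta_n)<0$ directly). The paper collapses your four sub-cases into these two, but the content is identical.

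The genuine gap is in your fourth regime for the refined bound. You assert that the inequality $\beta_{n+1}\leq -1+\alpha_{n+1}+\alpha_{n+2}$ can be propagated through $x\mapsto x/(1+x)$ and ``closes exactly''; it does not. Already at $n=2$: from $\beta_1=-\tfrac16$ one gets $\beta_2=\beta_1/(1+\beta_1)=-\tfrac15$, whereas $-1+\alpha_2+\alpha_3=-\tfrac{5}{12}$, and $-\tfrac15>-\tfrac{5}{12}$. Even starting from the extremal value $\beta_n=-1+\alpha_n+\alpha_{n+1}$, the image under $x\mapsto x/(1+x)$ is $1-1/(\alpha_n+\alpha_{n+1})$, which for $n=1$ equals $-\tfrac15$ and already violates the target $-\tfrac{5}{12}$; so no bookkeeping in this regime can rescue the induction. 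The paper's own proof glosses over the same point by invoking $\beta_n<\beta_{n-1}$ in the no-rearrangement case, which is true but insufficient because the target $-1+\alpha_n+\alpha_{n+1}$ is strictly decreasing in $n$. Since the only downstream use of the Lemma (in Proposition~\ref{prop:induction}(v)) requires just $\beta_n<0$, the cleanest fix is to drop the second containment, or to weaken it to $\beta_n\leq -1+\alpha_k+\alpha_{k+1}$ with $k\leq n$ the most recent rearrangement index, which \emph{is} what both arguments actually establish.
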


\begin{proof}
The result is clearly true for $n=0$.
We first prove $\beta_n \in [-\infty,0)$ for all $n \in \NN$.
Suppose by induction that $\beta_n \in [-\infty,0)$ for some $n \geq 0$.
If $\beta_n \in [-1,0)$ then \eqref{eq:slopes} gives
$\beta_{n+1} = \beta_{n+1}^* = \frac{\beta_n}{\beta_n + 1} < 0$.
Suppose now that $\beta_n < -1$, i.e. $\beta_{n+1}^* \geq 1$.
We have that $\alpha_n + \alpha_{n+1}$ is (strictly) decreasing and
$\alpha_1 + \alpha_2 = \frac{5}{6}$, so that \eqref{eq:slopes}, second case in definition of $\beta_{n+1}$,
entails $\beta_{n+1} = \alpha_{n+1} + \alpha_{n+2} - \beta_{n+1}^* \leq
\frac{5}{6} - 1 < 0$.

Now, let us prove
$\beta_n \in [-\infty, -1 + \alpha_n + \alpha_{n+1}]$ for all $n \in 
\NN$.
We first observe that in case of no vertical
rearrangement ($\beta_n^* < 0$)
we have $\beta_n < \beta_{n-1}$, so that we need to prove the result 
only in case of vertical rearrangement
($\beta_n < 0 < \beta_n^*$).
Definition \eqref{eq:slopes} of $\beta_n^*$ entails 
$1/\beta_n^* = 1/\beta_{n-1} + 1 \leq 1$.
It follows $\beta_n^* \geq 1$, so that $\beta_n = -\beta_n^* + \alpha_n + \alpha_{n+1}
\leq -1 + \alpha_n + \alpha_{n+1}$ (which is $<0$ if $n \geq 1$). 
\end{proof}

In connection with the next result,
it is useful to notice that a segment/line of slope $\gamma$ 
is transformed via $\affinetransformation$ into a segment/line
of slope $\gamma'$ with $\frac{1}{\gamma'} = \frac{1}{\gamma} + 1$ (with the position $\frac{1}{0} = \infty$).

\begin{Proposition}[\textbf{Slopes of polygonals}]\label{prop:induction}
For any $n \geq 0$ we have:
\begin{itemize}
\item[(i)]
points $\gmin_n$ and $\gmax_{n+1}$ lie of a line having slope $\alpha_n$;
\item[(ii)]  the segment with 
endpoints $\gmaxs_{n+1}$, $\gmins_{n+1}$ has slope $\beta^*_n$;
\item[(iii)]  the segment with endpoints $\gmax_{n+1}$, $\gmin_{n+1}$ has 
slope $\beta_n$;
\item[(iv)]  $\Delta^*_n > 0 \iff \beta^*_n > 0$;
\item[(v)] 
$\lmin_{n+1} < \lmax_{n+1}$;
\item[(vi)]  $\tmin_n < \tmax_{n+1}$.
\end{itemize}
\end{Proposition}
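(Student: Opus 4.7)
The plan is to prove (i)--(vi) simultaneously by induction on $n$, with the base case $n=0$ handled by direct inspection from the initial data $\gmax_0=(0,0)$, $\gmin_0=(0,1)$, $\gmaxs_1=(0,0)$, $\gmins_1=(1,1)$, $\gmax_1=(1,2)$, $\gmin_1=(1,1)$ together with $\alpha_0=1$ and $\beta_0=-\infty$. Item (i) has essentially already been proved inside Proposition~\ref{prop:different_form}, which places $\gmin_n$ and $\gmax_{n+1}$ on the line $r_n=\{y=\alpha_n(1+t)\}$; I shall freely use the resulting identities $\lmax_n=\alpha_{n-1}(1+\tmax_n)$ and $\lmin_n=\alpha_n(1+\tmin_n)$ throughout.

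For the inductive step, assume (ii)--(vi) hold up to index $n-1$. Item (ii) follows by transporting the slope $\beta_{n-1}$ of $\gmax_n\gmin_n$ (inductive (iii)) through $\affinetransformation$: since $\affinetransformation$ sends a segment of slope $\gamma$ to one of slope $\gamma'$ satisfying $1/\gamma'=1/\gamma+1$ and maps $\gmax_n\gmin_n$ onto $\gmaxs_{n+1}\gmins_{n+1}$, the image has slope $\beta_n^*$ by the very definition \eqref{eq:slopes}, with the special conventions at $\beta_{n-1}\in\{-\infty,-1\}$ matching the horizontal/vertical limit cases. Item (iv) then follows from the direct computation
\[
\beta_n^*=\frac{\lmins_{n+1}-\lmaxs_{n+1}}{\tmins_{n+1}-\tmaxs_{n+1}}=\frac{\lmin_n-\lmax_n}{-\Delta_n^*}
\]
combined with the inductive hypothesis $\lmin_n<\lmax_n$, which gives $\beta_n^*>0\iff\Delta_n^*>0$.

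For (iii), split on the sign of $\Delta_n^*$. If $\Delta_n^*\le 0$, then $\delta_n=0$ (as $n\ge 1$), so $\gmax_{n+1}=\gmaxs_{n+1}$, $\gmin_{n+1}=\gmins_{n+1}$, and the slope of $\gmax_{n+1}\gmin_{n+1}$ equals $\beta_n^*\le 0$, which is $\beta_n$ by the first branch of its definition. If $\Delta_n^*>0$, then \eqref{eq:tmax_tmin_rearrangement} gives $\tmin_{n+1}-\tmax_{n+1}=\Delta_n^*$ and
\[
\lmin_{n+1}-\lmax_{n+1}=(\lmin_n-\lmax_n)+\Delta_n^*(\alpha_n+\alpha_{n+1})=\Delta_n^*\bigl(-\beta_n^*+\alpha_n+\alpha_{n+1}\bigr),
\]
so the slope is $-\beta_n^*+\alpha_n+\alpha_{n+1}$, which matches $\beta_n$ by the second branch.

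For (v), Proposition~\ref{prop:different_form} gives $\tmax_{n+1}\le\tmin_{n+1}$: equality forces $\lmax_{n+1}=\lmax_n$ and $\lmin_{n+1}=\lmin_n$ (by the formulas defining them), and (v) then follows from the inductive hypothesis; otherwise $\tmin_{n+1}-\tmax_{n+1}>0$, and (iii) together with Lemma~\ref{teo:bounds_on_beta} (which supplies $\beta_n<0$) forces $\lmax_{n+1}>\lmin_{n+1}$ directly. Finally, (vi) via \eqref{eq:charat} amounts to $\tmin_n<\tmin_n+\lmin_n$ (immediate from $\lmin_n>0$) together with $\tmin_n<\tmax_n+\lmax_n$; substituting $\tmax_n=n\lmax_n-1$ and $\tmin_n=(n+1)\lmin_n-1$ reduces the latter to $(n+1)(\lmax_n-\lmin_n)>0$, again by inductive (v). The main subtlety lies in the slope computation for (iii) in the rearrangement case; once the identification $\beta_n=-\beta_n^*+\alpha_n+\alpha_{n+1}$ is in hand, (v) becomes a clean corollary of Lemma~\ref{teo:bounds_on_beta} and the remaining claims reduce to short algebraic verifications powered by the line-$r_n$ identities of Proposition~\ref{prop:different_form}.
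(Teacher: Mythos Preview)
Your proof is correct and follows essentially the same inductive scheme as the paper: transport slopes through $\affinetransformation$ to get (ii), split on the sign of $\Delta_n^*$ for (iii), and deduce (v) from Lemma~\ref{teo:bounds_on_beta}. The only cosmetic differences are that the paper re-derives (i) inside the induction rather than citing Proposition~\ref{prop:different_form}, and for (vi) the paper argues via $\tmax_{n+1}-\tmin_n=\alpha_n^{-1}(\lmax_{n+1}-\lmin_n)>0$ (using (i) and the just-proved (v)) instead of your substitution $\tmax_n=n\lmax_n-1$, $\tmin_n=(n+1)\lmin_n-1$; both routes are short and equivalent.
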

\begin{proof}
To prove (i)-(iv) we argue by induction.
All assertions (i) to (iv) are true for $n=0$.
Suppose that all claims are true for $n$ and let us prove them for $n+1$.
%
%
%
Segment $s$ from $\gmins_n$ to $\gmaxs_{n+1}$ is 
obtained from segment from $\gmin_{n-1}$ to $\gmax_n$
via the affine transformation $\affinetransformation$, hence it has inverse slope given by
$\frac{1}{\alpha_{n-1}} + 1$, so that its slope is $\alpha_n$.
Point $\gmax_{n+1}$ (resp. $\gmin_n$) either coincides with $\gmaxs_{n+1}$ (resp. $\gmins_n$), the right (resp. left)
endpoint of $s$, or,
in case of vertical rearrangement, can be checked to lie on segment $s$ using the first equation in
\eqref{eq:tmax_tmin_rearrangement} (resp. the second equation taking $n$ in place of $n+1$).
In either case the corresponding segment from $\gmin_n$ to $\gmax_{n+1}$
has itself slope $\alpha_n$.
Likewise segment $r$ with endpoints $\gmaxs_{n+1}$, $\gmins_{n+1}$ has
inverse slope $\frac{1}{\beta_{n-1}} + 1$, i.e. slope $\beta_n^*$.
This entails $\Delta^*_n > 0 \iff \beta^*_n > 0$.
In case of no vertical rearrangement
($\tmaxs_{n+1} \leq \tmins_{n+1}$) we have $\beta_n = \beta_n^*$ and we have finished.
In case of vertical rearrangement we have both $\Delta^*_n > 0$ and $\beta^*_n > 0$,
the slope of $\gmax_{n+1} \gmin_{n+1}$ can be obtained by computing the vertical displacement
of a point that starts from $\gmax_{n+1}$, moves to $\gmaxs_{n+1}$ along a segment with slope $\alpha_n$,
then to $\gmins_{n+1}$ (slope $\beta^*_n$), then to $\gmin_{n+1}$ (slope $\alpha_{n+1}$).
The horizontal displacements have size $\Delta^*_n$, $-\Delta^*_n$, $\Delta^*_n$ for a combined slope of
$\alpha_n + \alpha_{n+1} - \beta^*_n$, consistently with the definition of $\beta_n$
(Figure \ref{fig:slopes}). This concludes the proof of (i)-(iv).

\noindent%
We are left with (v) and (vi), which we also prove by induction,
observing that the claims are true for $n=0$.

(v). If $\Delta^*_n \leq 0$ we have 
$\lmin_{n+1} 
=\lmin_{n} 
< \lmax_{n}=\lmax_{n+1}$. 
If $\Delta^*_n > 0$ we use that $\beta_n <0$ (Lemma 
\ref{teo:bounds_on_beta}) and item (iii): 
we have $\lmin_{n+1}-\lmax_{n+1}= \beta_n \Delta_n^* <0$.  

(vi).
Since $\lmin_{n+1} \geq \lmin_n$ we get $\lmax_{n+1} > \lmin_n$ and from 
item (i) and the fact that $\alpha_n > 0$
we directly obtain 
\begin{equation}\label{eq:directly_obtain}
\tmax_{n+1} - \tmin_n = \frac{1}{\alpha_n} (\lmax_{n+1} - \lmin_n) > 0.
\end{equation}
\end{proof}

The sequence of points so constructed defines the polygonal $\mathcal{P}$ 
 which
is the graph of a function $\ell$.
We finally prove:

\begin{Theorem}\label{teo:the_polygonal_is_the_graph_of_ell}
The function $\ell$ so defined verifies \eqref{eq:how_to_reconstruct_l_from_itself}.
\end{Theorem}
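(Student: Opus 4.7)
The plan is to establish \eqref{eq:how_to_reconstruct_l_from_itself} by induction on the vertex index of $\mathcal{P}$, combining the algebraic form with the geometric interpretation developed in Section \ref{subsec:transformation_R_and_rearrangement}. First I would reduce the problem via Lemma \ref{lem:transl}. Since $\inidat = 1_{[0,1)}$ satisfies \eqref{eq:inidat_larger_than_alpha} with $\alpha = 1$, the equation to verify becomes $\ell(t) = 1+t$ on $[0,1)$ and, for $t \geq 1$,
\[
\ell(t) = 1 + |\{s \in [1,t] : \ell(t-s) > s\}|.
\]
The first condition is immediate because the segment of $\mathcal{P}$ joining $\gmin_0 = (0,1)$ to $\gmax_1 = (1,2)$ has slope $\alpha_0 = 1$. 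For the second, after the change of variables $\tau = t-s$ and using the bound $\ell \geq 1$ provided by Proposition \ref{prop:bound_of_l_from_below}, the right-hand side rewrites as $|\{\tau \in [0,t] : L_\ell(\tau) > t\}|$; by Remark \ref{rem:geometric_meaning} and the identities of Section \ref{subsec:transformation_R_and_rearrangement}, this is precisely the statement that, for every $t \geq 1$, the horizontal slice of $\affinetransformation({\rm subgraph}_+(\ell))$ at abscissa $t$ has Lebesgue measure equal to $\ell(t)$. The crucial preliminary observation is that $\affinetransformation(\mathcal{P}) = \mathcal{P}^*$, which follows from \eqref{eq:transformg} and linearity of $\affinetransformation$; hence the (possibly multi-valued) upper envelope of $\affinetransformation({\rm subgraph}_+(\ell))$ is precisely $\mathcal{P}^*$.

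I would then induct on $n \geq 0$, the inductive hypothesis being that on $[0, \tmin_{n+1}]$ the graph of $\ell$ coincides with the corresponding initial portion of $\mathcal{P}$ and \eqref{eq:how_to_reconstruct_l_from_itself} holds; the base case $n = 0$ is the interval $[0,1]$ already handled. Both sides of the equation are piecewise affine in $t$ with breakpoints only at vertices of $\mathcal{P}$, so in the inductive step it suffices to verify the equation at $t = \tmax_{n+2}$ and $t = \tmin_{n+2}$ and extend by affinity. In the non-rearrangement case $\Delta^*_{n+1} \leq 0$ (equivalently $\beta^*_{n+1} \leq 0$ by Proposition \ref{prop:induction}(iv)), the function $L_\ell$ is monotonic over the last relevant piece and $\halfline_t \cap {\rm subgraph}_+(\ell)$ reduces to a single segment; a direct computation using \eqref{eq:tmax_tmin_no_rearrangement}, \eqref{eq:charat} and \eqref{eq:charam} shows that its vertical projection has measure exactly $\ell(t)$.

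The main obstacle is the rearrangement case $\Delta^*_{n+1} > 0$. Here $\mathcal{P}^*$ contains a critical segment of positive slope $\beta^*_{n+1}$ running backward over $[\tmins_{n+2}, \tmaxs_{n+2}]$, so $\halfline_t \cap {\rm subgraph}_+(\ell)$ splits into two disjoint segments coming from two different branches of $\mathcal{P}^*$. One must show that the sum of the two projection lengths equals $\ell(t)$, and this is exactly the algebraic content of \eqref{eq:tmax_tmin_rearrangement}: the choices of $\lmax_{n+2}$ and $\lmin_{n+2}$ perform the Steiner (equal-area) symmetrization of the two branches of $\mathcal{P}^*$, while the slope $\beta_{n+1} = \alpha_{n+1} + \alpha_{n+2} - \beta^*_{n+1}$ from \eqref{eq:slopes} ensures that $\mathcal{P}$ continues to interpolate the prescribed lines $r_{n+1}$ of Proposition \ref{prop:induction}(i). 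The conserved product $\lmin_k \lmax_k = 2$ of \eqref{eq:prod_2} is the algebraic signature of the mass-preservation of this rearrangement and affords a convenient consistency check. Once the verification at the two endpoints $\tmax_{n+2}$ and $\tmin_{n+2}$ is carried out, essentially by computing the horizontal displacements $\Delta^*_{n+1}$ from \eqref{eq:tmin_tmax} and matching them against the length changes in \eqref{eq:tmax_tmin_rearrangement}, the induction closes.
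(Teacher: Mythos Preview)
Your approach is essentially the paper's: both reduce \eqref{eq:how_to_reconstruct_l_from_itself} to the geometric characterization of Section~\ref{subsec:transformation_R_and_rearrangement}, use $\affinetransformation(\mathcal{P}) = \mathcal{P}^*$ (from \eqref{eq:transformg}), and then check that $\mathcal{P}$ is the vertical rearrangement of the region bounded by $\mathcal{P}^*$. One point to fix: you invoke Proposition~\ref{prop:bound_of_l_from_below} for the bound $\ell \geq 1$, but that proposition \emph{assumes} $\ell$ solves \eqref{eq:how_to_reconstruct_l_from_itself}, which is exactly what you are proving, so the citation is circular. The bound itself is still available---read it off directly from the construction ($\lmin_n \geq 1$ for all $n$, see Remark~\ref{rem:separate})---or, more simply, drop the detour through Lemma~\ref{lem:transl} and work directly with \eqref{eq:trasformato_riarrangiato}. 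The paper's argument is also more economical than your induction with endpoint checks: where $\mathcal{P}^*$ is triple-valued the three branches are affine in $t$, hence so is the total vertical-slice measure, and the rearranged graph is therefore the linear interpolation between $(\tmax_{n+1},\lmax_{n+1})$ and $(\tmin_{n+1},\lmin_{n+1})$; this collapses your inductive step into a single sentence.
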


\begin{proof}
In view of Section \ref{subsec:transformation_R_and_rearrangement},
we only need to compute the vertical rearrangement of the multivalued function having
polygonal $\mathcal{P}^*$ as graph.
We have vertical rearrangement only in segments $(\tmax_{n+1},\tmin_{n+1})$ where $\Delta^*_n > 0$.
Since each of the three values varies linearly, the result can be computed by linear interpolation
between the values $\lmax_{n+1}$ and $\lmin_{n+1}$, giving, except for the interval $[0,1]$, the
polygonal $\mathcal{P}$ as graph.
The result follows after the addition of $\inidat$.
\end{proof}

In the end, 
$(\tmin_n)_{n \geq 1}$ is
the sequence of positive local minima of $\uM$,
$\lmin_n:= \uM(\tmin_n)$,
$\tmax_{n+1} \in (\tmin_n,\tmin_{n+1}]$ is 
the local maximum of $\uM$ in 
$[\tmin_n,\tmin_{n+1}]$, and 
$\lmax_{n+1}:=\uM(\tmax_{n+1})$.
Equality $\tmax_{n+1} =
\tmin_{n+1}$ holds only at a jump,
where $\uM(\tmin_{n+1})$ (resp. $\uM(\tmax_{n+1})$) indicates here the
smallest (resp. largest) value of $\uM$. 

\begin{Remark}[\textbf{Recovering geometrically the solution
from $\uM$}]\label{rem:from_l_to_v}\rm
Having constructed, although not in closed form, function $\ell$,
 we  are in a position to recover from it
the solution $\pressol (\tau, \cdot)$ at any given positive
time $\tau$.
To this aim we can resort to Remark \ref{rem:alternative_expression}, 
which provides a convenient way to achieve
our goal.
Particularly important is the term $|\{s\in[\tau,\xi] : \uM(s) \leq \xi - s \}|$ in formula \eqref{eq:part_imp}, which can actually be written
in a slightly different way,
 if we consider the image 
$$
A^*:= \affinetransformation({\rm subgraph}_+(\uM)).
$$ 
$A^*$ is bounded by the polygonal $\mathcal{P}^*$ 
and the positive $\tau$-axis, 
it gets intersected with the vertical
line lifted from abscissa $\xi$, and more specifically
$$
|\{s\in[\tau,\xi] : \uM(s) \leq \xi - s \}| = |\{(\xi,s): s \in [0,\xi - \tau]\} \setminus A^*|
$$
which, in view of \eqref{eq:how_to_reconstruct_l_from_itself} and \eqref{eq:obv}, can be recognized as a ``partial''
vertical rearrangement of $A^*$ happening only inside $\soct$.

This is illustrated in Figure \ref{fig:riemann_specific_times}, containing 
the evolution of
the present Example at specific times.
At time $\tau = 0.8$ the ``zig-zag'' in polygonal $\mathcal{P}^*$, in particular the triangular ``void'' that
is bound to be filled by the vertical rearrangement, is completely below the dashed line with slope $1$.
This implies that no rearrangement is taking place yet in the reconstruction of $\pressol (\tau,\cdot)$
and we simply use the top segment in the zig-zag of $\mathcal{P^*}$.
This gives the same result as re-adding the size of the void triangle intersected with the vertical line at $\xi$
to the quantity $\ell(\xi) - (\xi - \tau)$.
At time $\tau = 1.4$ the zig-zag (the ``void'' triangle) is partially above the dashed line with slope $1$.
This implies partial vertical rearrangement of only the part of $A^*$ lying above this line, which is
the same as re-adding to $\ell(\xi) - (\xi - \tau)$ that part of the vertical segment at $\xi$ that
is in the void triangle and below the line at $45$ degrees.
At time $\tau = 1.8$ the zig-zag is completely above the line at $45$ degrees, so that there is no portion of
the void triangle below that line and there is no contribution from $\{(\xi,s): s \in [0,\xi - \tau]\} \setminus A^*$
in the reconstruction of $\pressol$.
Finally, time $\tau=3.2$ illustrates the situation where there is no vertical rearrangement at all in function $\uM$.
Clearly in this case we also have no contribution from $\{(\xi,s): s \in [0,\xi - \tau]\} \setminus A^*$.

It should be noted that the Lyapunov functional decreases strictly exactly when we have only partial
rearrangement (e.g. in a neighborhood of time $\tau = 1.4$ in Figure \ref{fig:riemann_specific_times}).
\end{Remark}

\begin{figure}
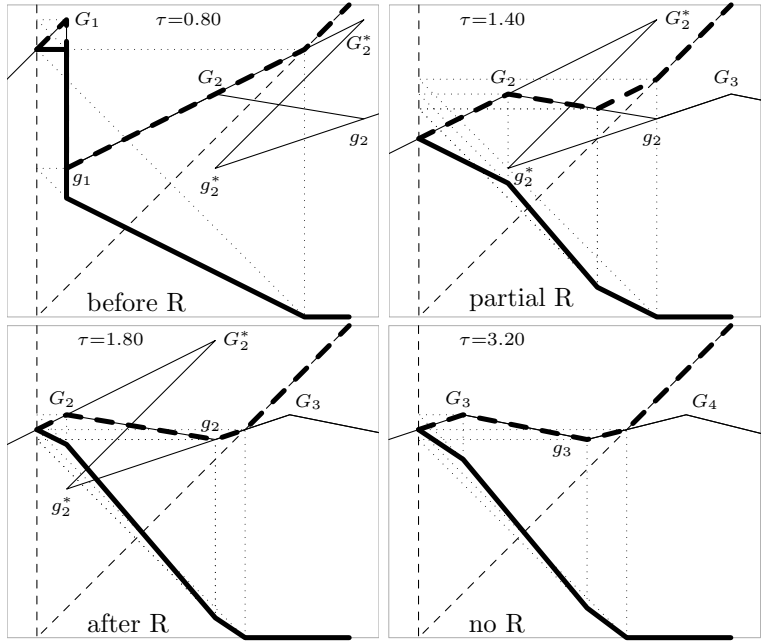

\begin{center}
\includegraphics{\figdir/riemannx_times-6.mps}
\includegraphics{\figdir/riemannx_times-9.mps}
\includegraphics{\figdir/riemannx_times-11.mps}
\includegraphics{\figdir/riemannx_times-18.mps}
\end{center}
\caption{\small Various typical situations in the 
reconstruction of $\pressol(\tau,\cdot)$ at specific times during
the evolution of the Riemann problem. The bold polygonal represents the graph 
of $\pressol(\tau,\cdot)$,
the thin lines display part of the polygonals $\mathcal{P}$ and $\mathcal{P}^*$ (defined by \eqref{eq:polP} and \eqref{eq:polPstar} respectively), whereas
the bold dashed line is the graph of $\pressol(\tau,\xi) + (\xi - \tau)$. 
R stands for (vertical)
rearrangement.}
\label{fig:riemann_specific_times}
\end{figure}

%
\makeatletter
\begin{figure}
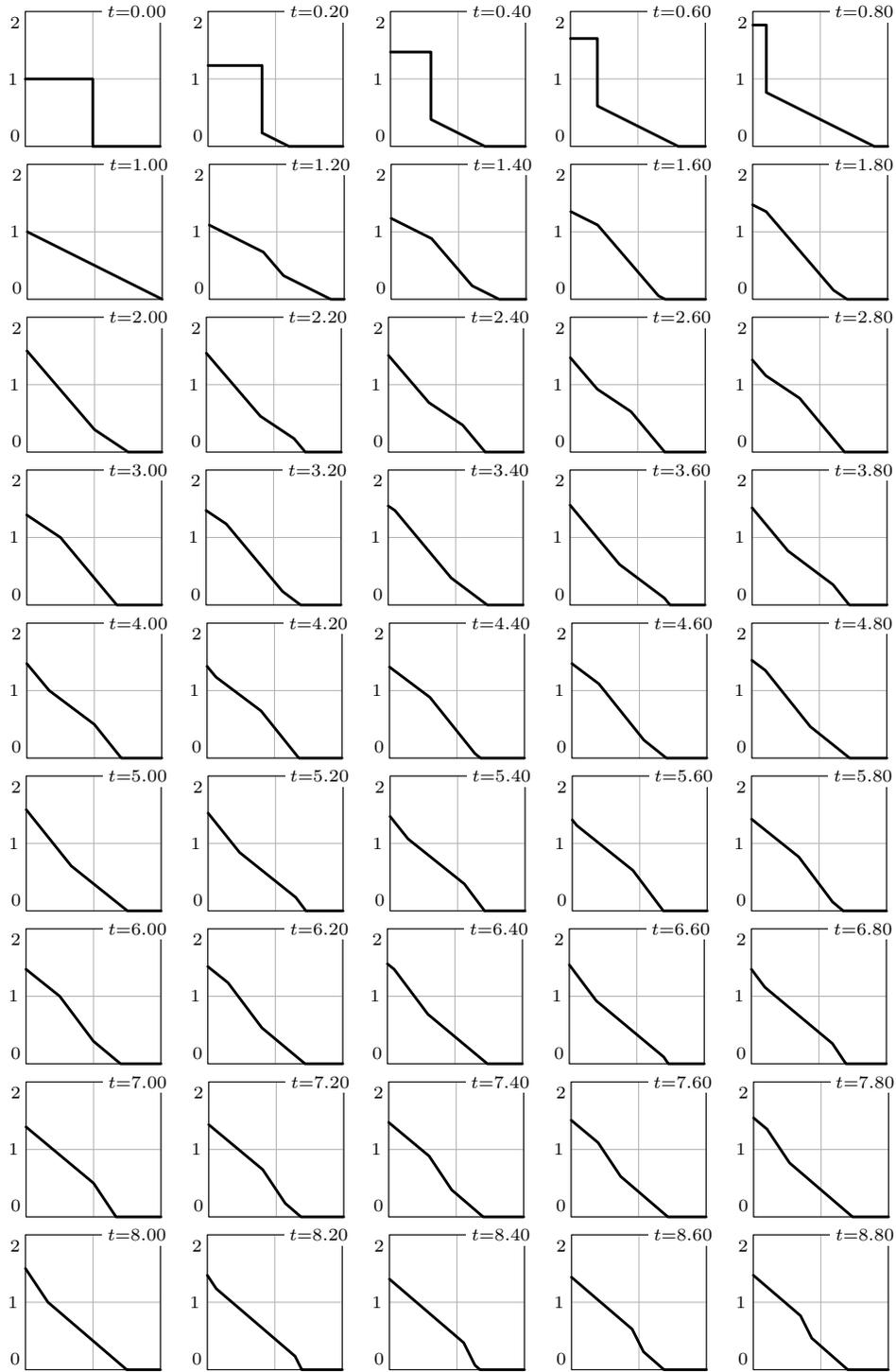

\begin{center}
\newcounter{itime}\stepcounter{itime}
\@whilenum\value{itime} < 45\do
{
\noindent%
\hfill\stepcounter{itime}\includegraphics{\figdir/riemann_times-\theitime.mps}
\hfill\stepcounter{itime}\includegraphics{\figdir/riemann_times-\theitime.mps}
\hfill\stepcounter{itime}\includegraphics{\figdir/riemann_times-\theitime.mps}
\hfill\stepcounter{itime}\includegraphics{\figdir/riemann_times-\theitime.mps}
\hfill\stepcounter{itime}\includegraphics{\figdir/riemann_times-\theitime.mps}\hfill
\\
}
\end{center}
\caption{\small 
Time-movie of the solution of the Riemann problem ($\inidat = 1_{[0,1)})$,
up to time $t = 8.80$. 
}
\label{fig:Riemann_movie_I}
\end{figure}
\begin{figure}
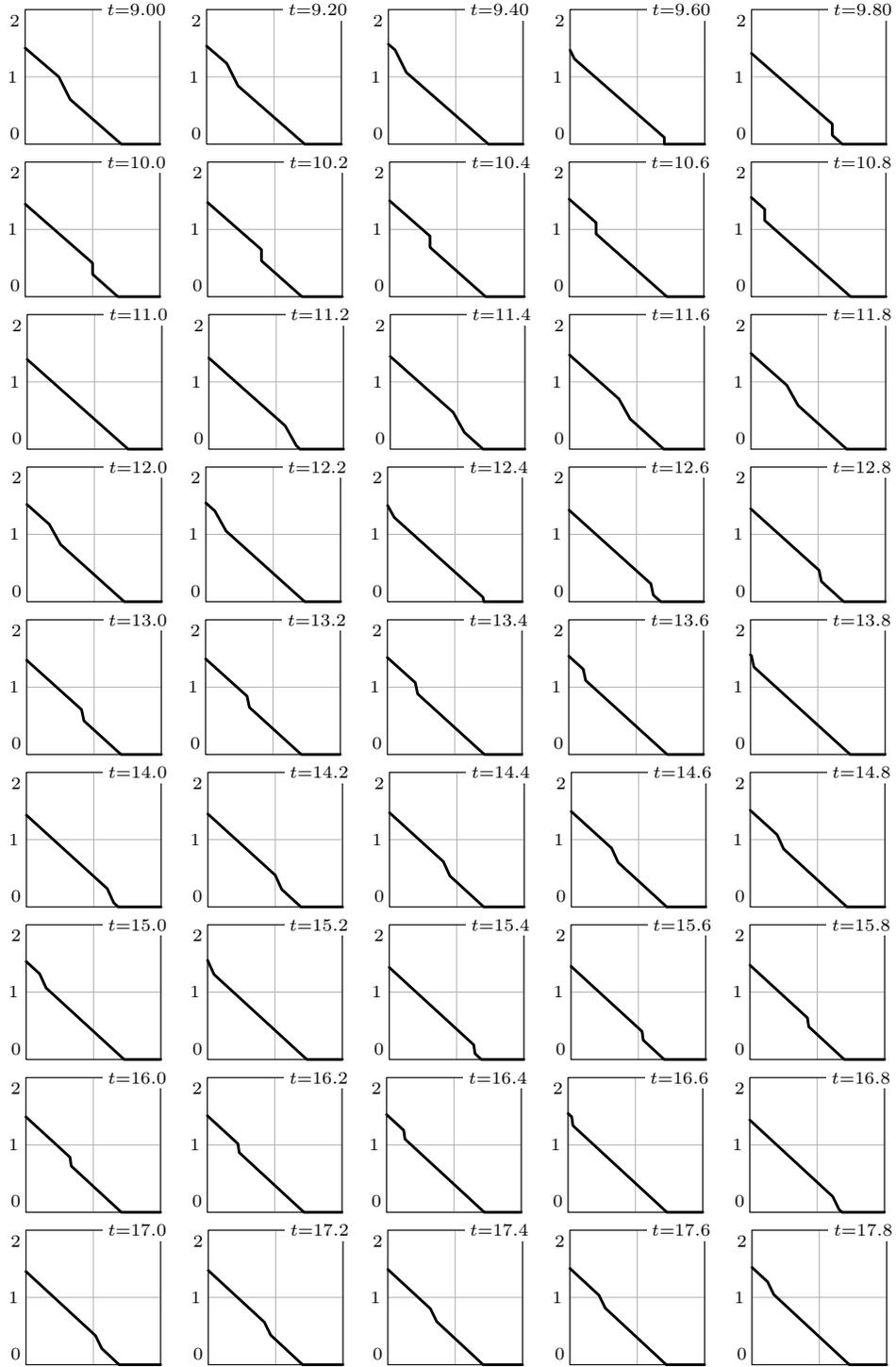

\begin{center}
\@whilenum\value{itime} < 91\do
{
\noindent%
\hfill\stepcounter{itime}\includegraphics{\figdir/riemann_times-\theitime.mps}
\hfill\stepcounter{itime}\includegraphics{\figdir/riemann_times-\theitime.mps}
\hfill\stepcounter{itime}\includegraphics{\figdir/riemann_times-\theitime.mps}
\hfill\stepcounter{itime}\includegraphics{\figdir/riemann_times-\theitime.mps}
\hfill\stepcounter{itime}\includegraphics{\figdir/riemann_times-\theitime.mps}\hfill
\\
}
\end{center}
\caption{\small Time-movie of the solution of the Riemann problem ($\inidat = 1_{[0,1)})$,
from time $t = 9.00$ up to time $t=17.8$. }
\label{fig:Riemann_movie_II}
\end{figure}
\makeatother

Once $\uM$ is known, the solution 
$u$ is uniquely determined via Theorem \ref{teo:reconstructing_u_hat_from_l}
(and using the map $\Phi^{-1}$); for instance
(see Figs. \ref{fig:Riemann_movie_I} and \ref{fig:Riemann_movie_II}):

\begin{equation}\label{eq:before_than_one}
u(t,x) = 
\begin{cases}
1+t & t \in [0,1), ~ 0 \leq x < 1-t,
\\
-\frac{1}{2} x + \frac{1}{2} t + \frac{1}{2}
& t \in [0,1), ~ 1-t < x \leq 1+t,
\\
0 
& t \in [0,1), ~x \geq 1+t.
\end{cases}
\end{equation}
Note that $u$ is discontinuous along $\{(t,x): t \in [0,1], x+t=1\}$.
Also
$$
\lim_{s \uparrow 1} u(s, x) 
= \Big(1 -\frac{1}{2}x\Big) \vee 0, \qquad x \in (0,+\infty),
$$
which is piecewise affine and Lipschitz, and
\begin{equation}\label{eq:before_than_two}
\begin{aligned}
& u(t,x) = 
\begin{cases}
\frac{t}{2} -\frac{1}{2}(x-1)&  0 \leq x \leq 2-t,\\
\frac{11}{6}-\frac{7}{6}x-\frac{t}{6}
&  2-t < x \leq \frac{t}{2}+\frac{1}{2},\\
\frac{3}{2}-\frac{x}{2} -\frac{t}{2}
&  \frac{t}{2}+\frac{1}{2} < x \le 3-t,\\
0 
& x > 3-t,
\end{cases} \qquad \qquad t \in [1,5/3].
\end{aligned}
\end{equation}

Recalling \eqref{eq:derivative_Lyap}, one checks that 
$\tau \in [0, +\infty) \to \Lyap(\wu(\tau))$ is Lipschitz and nonincreasing,
 strictly decreasing (quadratically) only on those intervals
where $L_\uM$ becomes strictly
decreasing. In particular, since
$\Lyap(\inidat) = 
 1$,
$\Lyap(u(1^-, \cdot)) = 1$,
it follows
$\Lyap(u(t, \cdot)) = 1$ for all $t \in [0,1]$.
In addition
$\Lyap\left(u\Bigl(\frac{5}{3},\cdot\Bigr)\right)
=\frac{17}{18}<1$.
For $t \in (5/3, 11)$ the value $\Lyap(u(t,\cdot)$
remains constant, and
next it decreases slightly in the 
interval $[11,11+3/20]$.

\begin{figure}
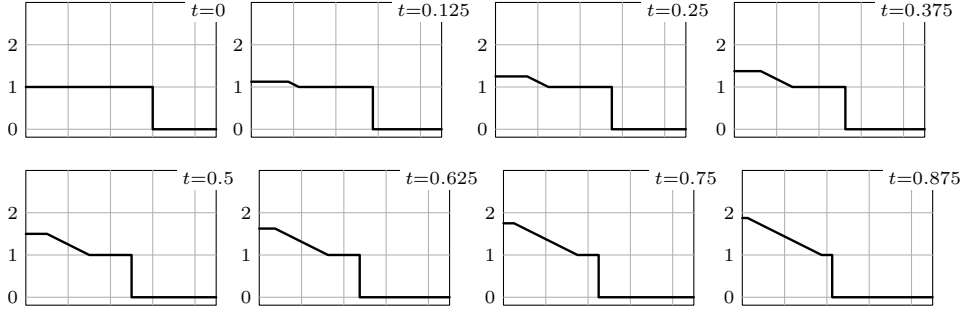

\hbox to\hsize{\(22)\hfil\(23)\hfil\(24)\hfil\(25)}
\bigskip
\hbox to\hsize{\(26)\hfil\(27)\hfil\(28)\hfil\(29)}
\caption{{\small Time-slices of the solution $u_1(t,x)$
described in Example~\ref{exa:Riemann_problem_with_a_parameter} ($a=3$),
for $t$ in the allowed range: the slope of the oblique segment
is $-1/2$.}}
\end{figure}

\nada{
\begin{Remark}[\textbf{Conservation of mass}]\label{rem:in_principle}\rm
Conservation of mass of the solution $\wu$ can be checked
from the graph of $L_\uM$ in 
Fig. \ref{fig:riemann-ell}: for instance, at time $t=2$ the 
mass of $\wu$ is the area of the triangle
with vertices $(1/2,2), (1,3), (1,2)$ plus the area of the 
triangle with vertices $(1,2), (2,7/2), (2,2)$.
\end{Remark}
}

\subsection{Asymptotic properties of the solution}
A direct consequence of 
the results of the previous section
is 
 the convergence of
the solution to a stationary configuration
(described in Example~\ref{exa:stationary_solutions})
in infinite time.

From (v) of Proposition \ref{prop:induction} it follows
\begin{equation}\label{eq:lambda1_lambda2}
1 < \lambda^- := 
\lim_{n\to+\infty} \lmin_n \leq 
\lambda^+:=\lim_{n\to+\infty} \lmax_n < 2.
\end{equation}

This immediately leads to the desired convergence property as $t \to +\infty$.
\begin{Proposition}[\textbf{Asymptotic convergence to the stationary solution}]
We have 
\begin{equation}\label{eq:sqrt2}
\lambda^+ = \lambda^-=\sqrt 2
\end{equation}
and $\lim_{t \to +\infty} \Vert u(t,\cdot) - 
u_{{\rm stat}}(\cdot)\Vert_{L^\infty([0,+\infty))}=0$,
where
$u_{{\rm stat}}(x) = \max\{\sqrt{2}-x,0\}$ is the stationary
solution with unit mass.
\end{Proposition}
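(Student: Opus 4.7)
The plan is to show $\lambda^+=\lambda^-=\sqrt 2$ and then separately deduce the $L^\infty$ convergence. I would reduce the first assertion to the claim that $r_n:=\lmax_n/\lmin_n\geq 1$ converges to $1$, since combined with $\lmin_n\lmax_n=2$ from \eqref{eq:prod_2} this yields \eqref{eq:sqrt2}. The recursion for $r_n$ is easy to extract: across a non-rearrangement step the ratio is unchanged (\eqref{eq:tmax_tmin_no_rearrangement} with $\delta_n=0$ for $n\geq 1$), while at a rearrangement step \eqref{eq:in_the_interesting_case} together with $\alpha_n/\alpha_{n+1}=(n+2)/(n+1)$ gives
\[
r_{n+1}=\Bigl(\frac{n+2}{n+1}\Bigr)^2\frac{1}{r_n},
\]
so that the constraint $r_{n+1}\geq 1$ (Proposition~\ref{prop:induction}(v)) forces the pre-rearrangement bound $r_n\leq\bigl((n+2)/(n+1)\bigr)^2$.

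The main step, which I expect to be the conceptual obstacle, is the dichotomy: either rearrangement occurs at infinitely many indices $m_k\to+\infty$, or it ceases after some index $N$. In the first subcase, the bound above applied at $n+1=m_k$ gives $r_{m_k-1}\leq\bigl((m_k+1)/m_k\bigr)^2\to 1$, and since $r$ is constant on each maximal interval of non-rearrangement indices, $r_n\to 1$. In the second subcase, set $d_n:=\tmin_n-\tmax_n$: the no-rearrangement condition $\tmin_n+\lmin_n\geq\tmax_n+\lmax_n$ combined with \eqref{eq:charat} gives $d_{n+1}=d_n-(\lmax_N-\lmin_N)$ for all $n\geq N$, and if $\lmax_N>\lmin_N$, iteration drives $d_n\to -\infty$, contradicting the nonnegativity $d_n\geq 0$ from Proposition~\ref{prop:different_form}. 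Thus $\lmax_N=\lmin_N$, and with $\lmax_N\lmin_N=2$ both values already equal $\sqrt 2$. Either way, \eqref{eq:sqrt2} follows.

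For the uniform convergence, \eqref{eq:sqrt2} together with the bracketing $\lmin_n\leq\uM(t)\leq\lmax_n$ on $[\tmin_n,\tmin_{n+1}]$ and $\tmin_n\to+\infty$ (Remark~\ref{rem:separate}) yields $\uM(t)\to\sqrt 2$ uniformly as $t\to+\infty$. Given $\varepsilon>0$, choose $T>0$ so that $|\uM(s)-\sqrt 2|<\varepsilon/2$ for $s\geq T$. For $t\geq T$ and $x\geq 0$, Remark~\ref{rem:alternative_expression} applied at $(\tau,\xi)=(t,t+x)$, after the change of variable $\sigma=t+x-s$, reads
\[
u(t,x)=\uM(t+x)-x+\bigl|\{\sigma\in[0,x]:\uM(t+x-\sigma)\leq\sigma\}\bigr|.
\]
The set inside the modulus is sandwiched between $[\sqrt 2+\varepsilon/2,x]\cap[0,x]$ and $[\sqrt 2-\varepsilon/2,x]\cap[0,x]$, so its Lebesgue measure differs from $\max(0,x-\sqrt 2)$ by at most $\varepsilon/2$ uniformly in $x\geq 0$. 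Combined with $|\uM(t+x)-\sqrt 2|<\varepsilon/2$ and the identity $(\sqrt 2-x)+\max(0,x-\sqrt 2)=u_{{\rm stat}}(x)$, this gives $\|u(t,\cdot)-u_{{\rm stat}}\|_{L^\infty([0,+\infty))}\leq\varepsilon$ for all $t\geq T$, which is the desired conclusion.
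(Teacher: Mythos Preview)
Your proof is correct but takes a different route from the paper for \eqref{eq:sqrt2}. The paper avoids the dichotomy entirely: it observes that $\tmax_{n+1}\leq\tmaxs_{n+1}$ and $\tmax_n\leq\tmin_n$ give the uniform bound $\tmax_{n+1}-\tmin_n\leq\tmaxs_{n+1}-\tmax_n=\lmax_n\leq 2$, and then \eqref{eq:directly_obtain} yields $0<\lmax_{n+1}-\lmin_n=\alpha_n(\tmax_{n+1}-\tmin_n)\leq 2\alpha_n\to 0$, so $\lambda^+-\lambda^-=0$ directly, after which $\lmin_n\lmax_n=2$ finishes. This is shorter and bypasses any case split on whether rearrangement occurs infinitely often. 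Your ratio argument works too, and has the mild advantage of making explicit the mechanism (the squeeze $1\leq r_n\leq((n+2)/(n+1))^2$ at rearrangement steps), but it requires the extra observation that the no-rearrangement case cannot persist forever unless $\lmax_N=\lmin_N$ already. For the $L^\infty$ convergence, the paper simply asserts that $\uM(t)\to\sqrt 2$ uniformly implies the result via \eqref{eq:recovering_of_u_hat}; your explicit computation through Remark~\ref{rem:alternative_expression} is a welcome expansion of that one-line claim.
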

\begin{proof}
From the definition we have $\tmax_{n+1} \leq \tmaxs_{n+1}$.
Using $\tmax_n \leq \tmin_n$ (Proposition \ref{prop:different_form}) we obtain
the estimate
$$
\tmax_{n+1} - \tmin_n \leq \tmaxs_{n+1} - \tmax_n = \lmax_n \leq 2 .
$$
Then \eqref{eq:sqrt2} follows from
$$
\lim_{n \to \infty} (\lmax_{n+1} - \lmin_n) = \lim_{n \to \infty} \alpha_n (\tmax_{n+1} - \tmin_n) \leq 2 \lim_{n \to \infty} \alpha_n = 0,
$$
where we use \eqref{eq:directly_obtain}.
\nada{
Formula \eqref{eq:sqrt2} follows 
from \eqref{eq:lambda1_lambda2} and \eqref{eq:prod_2}.

Observe that, if $\Delta_n^*>0$, from \eqref{eq:in_the_interesting_case}
$$
0 < \lmax_{n+1} - \lmin_n=\frac{\alpha_{n+1}}{\alpha_n} \lmin_n - \lmin_n
= \alpha_n \lmin_n \leq 2\alpha_n.
$$

Then
$0=\lim_{n\to\infty}(\lmax_n-\lmin_n)=\lambda^+-\lambda^-$.
Now using \eqref{eq:prod_2}
we immediately get \eqref{eq:sqrt2}.
}

Hence $\lim_{t \to +\infty} \sup_{s >t} \vert \uM(s) - \sqrt{2}\vert=0$,
which,
 from \eqref{eq:recovering_of_u_hat},
 implies the desired convergence of the solution 
to $u_{\rm stat}$.
\end{proof}

\subsection{Final examples}\label{sec:final_examples}
\begin{Example}[\textbf{Parent of $1_{[0,1)}$: 
creation of a decreasing jump}]
Let 
$$
\inidat(x) = (2-2x) \vee 0 \qquad \forall x \geq 0.
$$
We have %
$\uM(t) = -t+2$ for $t \in (0,1]$, 
and $\uM(t) = t$ for $t \in [1,2]$. 
Applying transformation $\affinetransformation$ 
for  $t \in [0,1]$ 
produces the vertical segment $\{1\}\times 
[1,2]$.
We have 
$$
u(1,\cdot) = 
1_{[0,1)}.
$$
%
The flow for times larger than $1$ is then
the same as the flow of the Riemann problem.
\end{Example}

\begin{Example}[\textbf{Riemann problem with a parameter}]
\label{exa:Riemann_problem_with_a_parameter}
\rm
Let $\inidat = 1^{}_{[0,a)}$, with $a > 1$.
Let $T$ be obtained as the
intersection between the lines
$\{x- t=1\}$ and $\{x+  t=   a\}$, i.e.,
$T = \frac{a-1}{2}$, and set
$T_1 = \min(1,\frac{a-1}{2})$.
The solution is given, for any $(t,x) \in
[0,T_1] \times (0,+\infty)$ by
$$
u(t,x) = \begin{cases}
1+t & {\rm if}~
x \leq 1- t,
\\
-\frac{x}{2} + \frac{t}{2} + \frac{3}{2} & {\rm if}~
1-  t \leq x \leq 1+  t,
\\
1 & {\rm if}~
1+ t \leq x \leq a- t,
\\
0 & {\rm if}~
x + t\geq a.
\end{cases}
$$
%
\end{Example}
Stationary solutions can be reached also in finite time, as shown
in this final example; notice that here the positivity condition
\eqref{eq:inidat_larger_than_alpha} on $\inidat$ is not satisfied.

\begin{Example}[\textbf{Reaching a stationary solution in finite time}]
Let $\inidat$ be the linear interpolation of $\inidat(0)=0$,
$\inidat(1)= 1/2$ and $\inidat(2)=0$. Then 
$$
\uM(t) = \begin{cases}
t & {\rm if}~t \in [0,1],
\\
1 & {\rm if}~ t > 1,
\end{cases}
$$
and 
the corresponding solution 
$u$ reaches the stationary solution $\max\{1-x,0\}$ at time 
$t=1$.
\end{Example}


%


\begin{thebibliography}{99}
%
\bibitem{AmFuPa:00}
L. Ambrosio, N. Fusco, D. Pallara,
``Functions of Bounded Variations and Free Discontinuity Problems'',
Clarendon Press, Oxford,
2000.

\bibitem{Br:84}
Y. Brenier,
{\it Averaged multivalued solutions for scalar conservation laws},
SIAM J. Numer. Anal. {\bf 21} (1984),
1013-1037.

\bibitem{Daf:16} C. Dafermos, 
``Hyperbolic Conservation Laws in Continuum Physics'', 
Grundleheren der mathematischen Wissenschaften, Vol. 325, 
Springer-Verlag, Berlin 2016.

\bibitem{GiMoSo:98} M. Giaquinta, G. Modica and J. Sou\u{c}ek,
``Cartesian Currents in the Calculus of Variations I. Cartesian Currents'',
Ergebnisse der Mathematik und ihrer Grenzgebiete, Vol. 37,
Springer-Verlag, Berlin-Heidelberg, 1998.

\bibitem{Mag}
F. Maggi, 
``Sets of Finite Perimeter and Geometric Variational Problems'',
Cambridge Studies in Advanced Mathematics 135, Cambridge University Press,
2012.

\bibitem{Pao}
A. Musesti, M. Paolini, C. Reale,
{\it An optimal bound on the
number of moves for open Mancala}, Discrete Math. {\bf 338}
(2015), 1872--1844.

\bibitem{Wh}
G.B. Whitham,
``Linear and Nonlinear Waves'',
John Wiley $\&$ Sons, New York, 1974.




\end{thebibliography}
\end{document}